\newtheorem{theorem}{Theorem}[section]
\newtheorem{corollary}[theorem]{Corollary}
\newtheorem{proposition}[theorem]{Proposition}
\newtheorem{lemma}[theorem]{Lemma}
\newtheorem{definition}[theorem]{Definition}
\newtheorem{remark}[theorem]{Remark}
\def\cA{\mathcal{A}}
\def\cB{\mathcal{B}}
\def\cD{\mathcal{D}}
\def\cE{\mathcal{E}}
\def\cF{\mathcal{F}}
\def\cH{\mathcal{H}}
\def\cP{\mathcal{P}}
\def\cS{\mathcal{S}}
\def\cU{\mathcal{U}}
\def\bP{\mathbb{P}}
\def\bC{\mathbb{C}}
\def\bD{\mathbb{D}}
\def\E{\mathbb{E}}
\def\bR{\mathbb{R}}
\begin{document}

\title{
 Existence of density for the stochastic wave equation with
space-time homogeneous Gaussian noise}

\author{Raluca M. Balan\footnote{University of Ottawa, Department of Mathematics and Statistics, STEM Building, 150 Louis-Pasteur Private, Ottawa, ON, K1N 6N5, Canada. E-mail address:
rbalan@uottawa.ca. Research supported by a grant from the
Natural Sciences and Engineering Research Council of Canada.} \and Llu\'is
Quer-Sardanyons\footnote{Departament de Matem\`atiques, Universitat
Aut\`{o}noma de Barcelona, 08193 Bellaterra (Barcelona), Catalonia,
Spain. E-mail address: quer@mat.uab.cat. Research supported by the
grant MTM2015-67802P.}
 \and Jian Song\footnote{Corresponding author. School of Mathematics, Shandong University, Jinan, Shandong 250100, P. R.
China. E-mail address: txjsong@hotmail.com}}


\date{December 18, 2018}
\maketitle

\begin{abstract}
\noindent In this article, we consider the stochastic wave equation
on $\bR_{+} \times \bR$,
driven by a linear multiplicative space-time
homogeneous Gaussian noise whose temporal and spatial covariance
structures are given by locally integrable functions $\gamma$ (in
time) and $f$ (in space), which are the Fourier transforms of
tempered measures $\nu$ on $\bR$, respectively $\mu$ on $\bR$. Our
main result shows that the law of the solution $u(t,x)$ of this
equation is absolutely continuous with respect to the Lebesgue
measure.
\end{abstract}

\noindent {\em MSC 2010:} Primary 60H15; 60H07

\vspace{1mm}

\noindent {\em Keywords:} Gaussian noise; stochastic partial differential equations; Malliavin calculus


\section{Introduction}

In this article, we study the absolute continuity of the law of the solution to the stochastic wave equation with linear multiplicative noise, in spatial dimension $d=1$:
\begin{equation}
\left\{\begin{array}{rcl}
\displaystyle \frac{\partial^2 u}{\partial t^2}(t,x) & = & \displaystyle \frac{\partial^2 u}{\partial x^2}(t,x)+u(t,x)\dot{W}(t,x), \quad t>0, x \in \bR, \\[2ex]
\displaystyle u(0,x) & = & 1, \quad x \in \bR,\\[1ex]
\displaystyle \frac{\partial u}{\partial t}(0,x) & = & 0, \quad x \in \bR.
\end{array}\right. \label{wave} 
\end{equation}
where $W$ is the {\em spatially-homogeneous Gaussian noise} considered in \cite{balan-song17,balan-chen}, whose precise definition is given in Section \ref{section-char} below.

The goal of this article is to show that the law of the solution to equation \eqref{wave} has a density with respect to the Lebesgue measure. This problem has a long history which we outline below.
Before this, we recall from \cite{balan-song17} the definition of the solution, referring to Section \ref{section-Malliavin} below for the precise definition of the Skorohod integral.
We denote by $G$ the fundamental solution of the wave equation on $\bR_{+} \times \bR$:
$$G(t,x)=\frac{1}{2}1_{\{|x| < t\}}.$$

\begin{definition}
{\rm We say that a process $u=\{u(t,x); t \geq 0,x \in \bR\}$ with
$u(0,x)=1$ for all $x \in \bR$ is a (mild Skorohod) {\em solution}
of equation \eqref{wave} if $u$ has a measurable modification
(denoted also by $u$) such that $\sup_{(t,x)\in [0,T] \times \bR}
\E|u(t,x)|^2<\infty$ for all $T>0$, and for any $t>0$ and $x \in
\bR$, the following equality holds in $L^2(\Omega)$:
\begin{equation}
\label{def-sol}
u(t,x)=1+\int_{0}^t \int_{\bR}G(t-s,x-y)u(s,y) W(\delta s, \delta y),
\end{equation}
where the stochastic integral is understood in the Skorohod sense, and the process $v^{(t,x)}=\{v^{(t,x)}(s,y)=1_{[0,t]}(s)G(t-s,x-y)u(s,y);s \geq 0,y \in \bR\}$ is Skorohod integrable.
}
\end{definition}

The existence of the solution to the wave equation with general initial condition and space-time homogeneous Gaussian noise has not been studied in the literature so far. Article \cite{balan-chen} examined the heat equation with initial condition given by a measure, and the same noise $W$ as in the present paper.

In the case of constant initial condition, it was proved in \cite{balan-song17} that the wave equation has
a unique solution in any spatial dimension $d$, provided that the
spatial spectral measure $\mu$ of the noise satisfies {\em Dalang's
condition}:
\begin{equation}
\label{Dalang-cond}
\int_{\bR^d}\frac{1}{1+|\xi|^2}\mu(d\xi)<\infty.
\end{equation}

Condition \eqref{Dalang-cond} was introduced simultaneously in
articles \cite{dalang99} and \cite{PZ00}, and plays a crucial role
in the study of stochastic partial differential equations (SPDEs)
with spatially homogeneous Gaussian noise (see for instance
\cite{dalang99, peszat02, NQ07, huang-le-nualart17} for a sample of
relevant references). Owing to Remark 10(b) in
\cite{dalang99}, note that \eqref{Dalang-cond} holds automatically
if $d=1$.

The problem of absolute continuity of the law and smoothness of the density for the solution of an SPDE goes back to article \cite{carmona-nualart88}, in which the authors studied the equation:
\begin{equation}
\label{SPDE}
Lu(t,x)=\sigma(u(t,x))\dot{W}(t,x)+b(u(t,x))
\end{equation}
with space-time white noise $W$, smooth functions $\sigma$ and $b$,
and $L$ the wave operator on $\bR_{+} \times I$, for an interval $I
\subset \bR$ which could be bounded, semi-bounded, or even $\bR$,
with Dirichlet boundary conditions when $I$ has a finite endpoint.
These authors showed that the {\em mild solution} (defined similarly
to \eqref{def-sol} using the Green function of the wave operator)
coincides with the {\em weak solution} (defined using integration
against test functions) and proved that this solution has a smooth
density. In \cite{pardoux-zhang93}, \cite{bally-pardoux98} and
\cite{mn}, it was shown that the same property
holds for equation \eqref{SPDE} in which $L$ is replaced by the heat
operator on $\bR_{+} \times [0,1]$, with Dirichlet (respectively
Neumann) boundary conditions. (In this case, the fact that the mild
and weak solutions coincide was known from Walsh' lecture notes
\cite{walsh86}.)

In the recent years, several authors revisited the problem of existence and smoothness of density for the mild solution of an SPDE of form \eqref{SPDE} on the entire space $\bR^d$, with Lipschitz functions $\sigma$ and $b$, driven by the more general Gaussian noise introduced in \cite{dalang-frangos98, dalang99}. This noise is spatially homogeneous (with spatial spectral measure $\mu$ as above), but white in time, i.e. has temporal covariance structure given formally by the Dirac distribution at $0$. The function $\sigma$ is assumed to satisfy the condition:
\begin{equation}
\label{non-deg}
|\sigma(x)| \geq c>0 \quad \mbox{for all} \ x \in \bR.
\end{equation}
This non-degeneracy condition guarantees the genuine stochastic
nature of the solution $u$, since the noise term will never vanish,
and turns out to be crucial in order to prove conditions
\eqref{norm-D-positive} and \eqref{negative-moments} below.
Nevertheless, in the case of the stochastic heat equation, condition
\eqref{non-deg} can be substantially weaken (see
\cite{mn,pardoux-zhang93}).

We refer the reader to \cite{millet-sanz99} for the wave equation in
spatial dimension $d=2$, \cite{NQ07} for the heat equation in any
dimension $d$ and the wave equation in dimension $d=1,2,3$
(see also \cite{mms,qs1,qs2}), and
\cite{sanz-sus13, sanz-sus15} for the wave equation in dimension $d\geq 4$.

In the case of the space-time Gaussian noise which is colored in
time (i.e. has temporal covariance structure given by a
non-negative-definite locally integrable function), all references
related to the problem of existence and smoothness of the density of
the law of the solution focus on the stochastic heat equation with
{\em linear} multiplicative noise (i.e. $\sigma(x)=x$ and $b=0$):
\begin{equation}
\label{SHE} \frac{\partial u}{\partial t}(t,x)=\frac{1}{2}\Delta
u(t,x)+u(t,x)\dot{W}(t,x), \quad t>0,x \in \bR^d,
\end{equation}
with initial condition $u(0,x)=u_0(x)$, where $u_0$ is a continuous
bounded function. In this case, the noise is not a martingale in
time, and the techniques of It\^o stochastic integration cannot be
used. This leads to several types of solution: the {\em mild
Skorohod solution} defined similarly to \eqref{def-sol} using the
Skorohod integral involving the Green function $G_h$ of the heat
operator, the {\em mild Stratonovich solution}  defined using a
Stratonovich-type integral of the same term as in \eqref{def-sol}
involving $G_h$, and the {\em weak solution} defined using
Stratonovich integration against test functions.  According to some
well-known criteria from Malliavin calculus, to show that $u(t,x)$
has a density it is enough to prove that
\begin{equation}
\label{norm-D-positive}
\|D u(t,x)\|_{\cH}>0 \quad \mbox{a.s.}
\end{equation}
and this density is smooth if $u(t,x)$ is infinitely differentiable
in the Malliavin sense and
\begin{equation}
\label{negative-moments} \E\|Du(t,x)\|_{\cH}^{-2p}<\infty \quad
\mbox{for all} \quad p>0.
\end{equation}
The notation $\cH$ stands for the Hilbert space associated to the space-time correlation of the noise (see Section \ref{section-char} for the precise definition).

In \cite{hu-nualart-song11}, it was proved that relation \eqref{negative-moments} holds for the weak solution of \eqref{SHE}, if the covariance functions of the noise are given by $\gamma(t)=\rho_H(t):=H(2H-1)|t|^{2H-2}$ and $f(x)=\prod_{i=1}^{d}\rho_{H_i}(x_i)$ with parameters $H,H_1, \ldots,H_d \in (1/2,1)$. This result was obtained using the Feynman-Kac (FK) representation of the weak solution, which holds only when the parameters of the noise satisfy the condition $2H+\sum_{i=1}^{d}H_i>d+1$. Under the same condition, it might be possible to prove that the mild Skorohod solution of equation \eqref{SHE} satisfies \eqref{negative-moments}, using the FK formula of this solution given in \cite{hu-nualart-song11}.
In the case of general covariance functions $\gamma$ and $f$, the authors of \cite{HHNT} established
the FK formula for the mild Stratonivich solution to equation \eqref{SHE}, assuming that $0 \leq \gamma(t) \leq C_{\beta}|t|^{-(1-\beta)}$ and
\begin{equation}
\label{eq:beta}
\int_{\bR^d}\left(\frac{1}{1+|\xi|^2} \right)^{\beta}\mu(d\xi)<\infty,
\end{equation}
for some $\beta \in (0,1)$ and $C_{\beta}>0$. Under this assumption,
it may be possible to show that the mild Stratonovich solution
satisfies relation \eqref{negative-moments}. In the recent preprint
\cite{hu-le18}, it was proved that the mild Skorohod solution
$u(t,x)$ to equation \eqref{SHE} satisfies \eqref{negative-moments}
by deriving estimates for the small ball probability ${\mathbb
P}(\|Du(t,x)\|_{\cH} \leq a)$ as $a \to 0+$. This was proved using
an FK formula for the regularization of the solution, and assuming
that the noise is white in either space or time, or $c_1
t^{\alpha_0} \leq \gamma(t) \leq c_2 t^{-\alpha_0}$ for all $t \in
\bR$, for some $c_1, c_2>0$ and $\alpha_0 \in [0,1)$, and $f$
satisfies the scaling property $f(cx) \leq c^{-\alpha} f(x)$ for all
$c>0$ and $x \in \bR^d$, for some $\alpha \in (0,2)$. In addition,
these authors assume that $\gamma=\gamma_0*\gamma_0$ and $f=f_0*f_0$
for some functions $\gamma_0$ and $f_0$.

In the present article, we will prove the absolute continuity of the
law of the solution to the wave equation \eqref{wave} with
space-time homogeneous Gaussian noise, under
Assumption A given below. For the wave equation, it is not known if
there is a FK formula for the solution. Our method for proving
\eqref{norm-D-positive} is similar to the one used in \cite{NQ07}
for the white noise in time. Since $\sigma(x)=x$ fails to satisfy
condition \eqref{non-deg}, we will prove \eqref{norm-D-positive} by
localizing on the event $\Omega_m=\{|u(t,x)|>m^{-1}\}$ and let $m
\to \infty$. Note that, compared to parabolic type equations, the
solution of the stochastic wave equation does not verify a
comparison principle and therefore can hit zero with positive
probability.

 A closer look at the inner product in $\cH$ reveals
that it is enough to prove that (see Corollary \ref{new-cor} below):
\begin{equation}
\label{norm-0-positive}
\int_0^t \int_{\bR} |D_{r,z}u(t,x)|^2dz dr>0 \quad \mbox{a.s. on} \quad \Omega_m.
\end{equation}
For this, we will show that the process $\{D_{r,z} u(t,x);r \in
[0,t],z \in \bR\}$ has a jointly measurable modification and
satisfies a certain integral equation involving a Hilbert-space
valued Skorohod integral. As shown in Sections
\ref{section-Malliavin}, \ref{section-D2} and \ref{section-eq-D} below, the proofs of these facts contain some
of the main technical difficulties encountered throughout the
article, which are mainly due to the time-space correlation of the
underlying noise.

The recent preprints \cite{huang-nualart-viitasaari,delgato-nualart-zheng} contain some estimates which are related to the present article. More precisely, Lemma 5.1 of \cite{huang-nualart-viitasaari} shows that if $u$ is the solution of the stochastic heat equation with space-time white noise, then for any $T>0$ and $(t,x) \in [0,T] \times \bR$,
\begin{equation}
\label{Nualart-estimate}
\E|D_{r,z}u(t,x)|^2 \leq C_T G^2(t-r,x-z), \quad \mbox{for all} \ (r,z) \in [0,t] \times \bR,
\end{equation}
where $G(t,x)=(2\pi t)^{-1/2}\exp(-x^2/(2t))$ is the heat kernel and $C_T>0$ is a constant depending on $T$. In the case of the wave equation in dimension $d=1$ with Gaussian noise which is white in time and fractional in space (with Hurst index $H \geq 1/2$), Lemma 2.2 in \cite{delgato-nualart-zheng} gives an estimate similar to \eqref{Nualart-estimate}, except that it holds for {\em almost all} $(r,z) \in [0,t] \times \bR$.
In the present article, we could not obtain such a nice estimate for the solution of the wave equation in dimension $d=1$, with general space-time homogeneous Gaussian noise. But we prove that $\E\|D_{r,\cdot}u(t,x)\|_{L^2(\bR)}^2$ is uniformly bounded in $r \in [0,t]$ and $(t,x) \in [0,T] \times \bR$, which implies that $D_{r,\cdot}u(t,x)$ belongs to $L^2(\bR)$ a.s. (see Theorem \ref{th-D-in-L2} below).

\medskip

We suppose that the following assumption holds, which is important for describing the space $\cH$ and its inner product, as shown by Theorem \ref{jolis-th} below.

\vspace{2mm}

\noindent {\bf Assumption A.} $\mu(d\xi)=(2\pi)^{-d}g(\xi)d\xi$, $\nu(d\tau)=(2\pi)^{-1}h(\tau)d\tau$ and $1/(hg)1_{\{hg>0\}}$ is a slow growth (or tempered) function, i.e. there exists an integer $k \geq 1$ such that
$$\int_{\{hg>0\}}\left(\frac{1}{1+\tau^2+|\xi|^2} \right)^k  \frac{1}{h(\tau)g(\xi)}d\tau d\xi<\infty.$$

\vspace{2mm}

Our basic example is $\gamma(t)=H(2H-1)|t|^{2H-2}$ with $1/2<H<1$ and $f$ is the Riesz kernel of order $\alpha$, i.e. $f(x)=|x|^{-1+\alpha}$ with $0<\alpha<1$ .
In this case, $h(\tau)=|\tau|^{2H-1}$  and $g(\xi)=c_{\alpha}|\xi|^{-\alpha}$ for some constant $c_{\alpha}>0$ depending on $\alpha$. Assumption A is satisfied for this example.

The following theorem is the main result of the present article.

\begin{theorem}
\label{thm:density} Let $u$ be the solution of equation
\eqref{wave}. If Assumption A holds, then the restriction of the law
of the random variable $u(t,x)1_{\{u(t,x) \not=0\}}$ to the set $\bR
\verb2\2 \{0\}$ is absolutely continuous with respect to the
Lebesgue measure on $\bR \verb2\2 \{0\}$.
\end{theorem}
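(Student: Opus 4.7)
The plan is to apply the local Bouleau--Hirsch criterion: since $u(t,x)$ will be shown to lie in $\mathbb{D}^{1,2}$, the restriction of its law to the set $\{\|Du(t,x)\|_{\cH}>0\}$ is absolutely continuous with respect to Lebesgue measure. Hence it suffices to prove the a.s.\ inclusion $\{u(t,x)\neq 0\}\subseteq \{\|Du(t,x)\|_{\cH}>0\}$. Writing $\{u(t,x)\neq 0\}=\bigcup_{m\geq 1}\Omega_m$ with $\Omega_m=\{|u(t,x)|>1/m\}$ and invoking Corollary \ref{new-cor}, the task reduces, for each fixed $m$, to showing
\[
\int_0^t\!\int_{\bR}|D_{r,z}u(t,x)|^2\,dz\,dr>0\quad\text{a.s.\ on }\Omega_m.
\]

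The key step uses the integral equation satisfied by the Malliavin derivative (to be derived in Section \ref{section-eq-D}):
\[
D_{r,z}u(t,x)=G(t-r,x-z)\,u(r,z)+I_{r,z}(t,x),
\]
where $I_{r,z}(t,x):=\int_r^t\!\int_{\bR}G(t-s,x-y)\,D_{r,z}u(s,y)\,W(\delta s,\delta y)$. The elementary inequality $(a+b)^2\geq \tfrac{1}{2}a^2 - b^2$, restricted to $r\in(t-\delta,t)$ for small $\delta>0$, yields
\[
\int_{t-\delta}^t\!\int_{\bR}|D_{r,z}u(t,x)|^2\,dz\,dr \;\geq\; \tfrac{1}{8}\!\int_{t-\delta}^t\!\int_{|x-z|<t-r}|u(r,z)|^2\,dz\,dr \;-\; \int_{t-\delta}^t\!\int_{\bR}|I_{r,z}(t,x)|^2\,dz\,dr.
\]
A sample-path (or $L^p$) continuity property of $u$ near $(t,x)$ produces a random $\delta_0(\omega)>0$ such that $|u(r,z)|\geq 1/(2m)$ on the backward light-cone of height $\delta_0$ at $(t,x)$; consequently, the main term is bounded below by a constant multiple of $\delta^2/m^2$ whenever $\delta\leq\delta_0$.

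It then remains to argue that the error integral is of strictly smaller order in $\delta$ with high probability. Bounding $\E|I_{r,z}(t,x)|^2$ via the Meyer-type moment inequality for Hilbert-space valued Skorohod integrals (with its compensating second-order Malliavin derivative correction), Theorem \ref{th-D-in-L2}, and the second-order estimates of Section \ref{section-D2}, one shows that $\E\!\int_{t-\delta}^t\!\int_{\bR}|I_{r,z}(t,x)|^2\,dz\,dr = O(\delta^{2+\epsilon})$ for some $\epsilon>0$, the extra $\delta^{\epsilon}$ coming from the fact that the wave kernel constrains the spatial integration to an interval of width $2(t-s)$. Markov's inequality then promotes this into a high-probability statement; intersecting with $\{\delta_0\geq\delta\}\cap\Omega_m$ and letting first $\delta\to 0$ and then $m\to\infty$ concludes the proof.

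The principal obstacle is precisely this final step: extracting the extra power of $\delta$ uniformly in $(r,z)$ when the driving noise is colored in \emph{both} time and space. No It\^o martingale structure is available, so the argument must rely on the Skorohod-integral moment inequality, on the tractable description of the $\cH$-inner product provided by Theorem \ref{jolis-th} (which is exactly why Assumption A is imposed), and on the delicate control of the second-order Malliavin derivative of $u$ developed in Section \ref{section-D2}.
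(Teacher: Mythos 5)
Your overall skeleton --- localization on $\Omega_m=\{|u(t,x)|>1/m\}$, reduction via Corollary \ref{new-cor} to the a.s.\ positivity of $\int_0^t\int_{\bR}|D_{r,z}u(t,x)|^2\,dz\,dr$ on $\Omega_m$, the integral equation for $D_{r,\cdot}u(t,x)$, the inequality $\|a+b\|^2\geq\tfrac12\|a\|^2-\|b\|^2$ on the window $r\in(t-\delta,t)$, and a Markov-inequality comparison of main and error terms --- is exactly the paper's strategy. However, two of your steps do not go through as stated.

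First, the lower bound on the main term. You invoke a random $\delta_0(\omega)>0$ with $|u(r,z)|\geq 1/(2m)$ on the backward light cone of height $\delta_0$; this requires almost sure continuity of the sample paths of $u$, which is neither proved in the paper nor available for this space--time colored noise (only $L^2(\Omega)$-continuity is established, via Theorem 7.1 of \cite{balan-song17}), and $L^p$-continuity alone --- your parenthetical alternative --- cannot produce a pathwise $\delta_0(\omega)$. The paper avoids this entirely: on $\Omega_m$ it writes $|u(r,z)|^2\geq u(t,x)^2-|u(t,x)^2-u(r,z)^2|$, so the main term is bounded below by $\tfrac{1}{4m^2}\delta^2-J(\delta)$ with $J(\delta)=\int_{t-\delta}^t\int_{\bR}G^2(t-r,x-z)\big(u(t,x)^2-u(r,z)^2\big)\,dz\,dr$, and then controls only the first moment, $\E|J(\delta)|\leq\tfrac12 C_t^* g_{t,x}(\delta)\,\delta^2$ with $g_{t,x}(\delta)\to0$ by $L^2$-continuity, feeding $J(\delta)$ into the same Markov inequality as the stochastic error. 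Second, your claimed bound $O(\delta^{2+\epsilon})$ for $\E[I(\delta)]$ is not available for general $\gamma$: the paper's estimate is $\E[I(\delta)]\leq c_0(C_t+C_t'')\,\delta^2\,\Gamma_\delta$ with $\Gamma_\delta=2\int_0^\delta\gamma(s)\,ds$, and for a merely locally integrable $\gamma$ one only knows $\Gamma_\delta\to0$ without any rate; the extra smallness comes from the temporal covariance (together with $\psi_0(\delta)\leq c_0\delta$), not from a fixed power of $\delta$. Fortunately $o(\delta^2)$ is all that is needed, since the main term is of exact order $\delta^2/m^2$. Finally, the integral equation holds for each fixed $r$ only as an identity in $L^2(\Omega;L^2(\bR))$, so before integrating pathwise over $r\in(t-\delta,t)$ one must discard a $(\bP\times\lambda)$-null set (the set $N$ and the event $\widetilde\Omega$ in the paper's proof); this measurability step, which relies on Theorem \ref{D-measurable-th}, is missing from your outline.
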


This article is organized as follows. In Section \ref{section-char},
we describe our space-time homogeneous Gaussian noise and  we
characterize the space $\cH$ and its inner product. In Section
\ref{section-Malliavin}, we review some basic elements of Malliavin
calculus, we prove that the solution to equation \eqref{wave} is
infinitely differentiable in the Malliavin sense, and examine some
properties of its Malliavin derivative. In Section
\ref{section-D2} we examine the second order Malliavin derivative of
the solution. In Section \ref{section-eq-D}, we prove that the
Malliavin derivative satisfies a certain integral equation. In
Section \ref{sec:density}, we give the proof of Theorem
\ref{thm:density}. In Appendix A, we discuss a Parseval-type
identity, while in Appendix B we give a criterion for the existence
of a measurable modification of a random field. Both these results
are used in the present article.

\section{Characterization of the space $\cH$}
\label{section-char}

In this section, we define the space-time homogeneous Gaussian noise
$W$, we provide an alternative definition of the inner product in
$\cH$ in terms of the Fourier transform in the space and time variables, and
we give a characterization of the space $\cH$ (which is due
essentially to \cite{BGP12}). This characterization plays an
important role in the present article, because it allows us to focus
on \eqref{norm-0-positive}, instead of \eqref{norm-D-positive}. The
results in the present section are valid for any space dimension
$d\geq 1$.


The noise $W$ is given by a zero-mean Gaussian process
$\{W(\varphi);\varphi \in \cD(\bR^{d+1})\}$ defined on a complete
probability space $(\Omega,\cF,\bP)$, with covariance
$$\E[W(\varphi_1)W(\varphi_2)]=\int_{\bR^2 \times \bR^{2d}}\gamma(t-s)f(x-y)\varphi_1(t,x)\varphi_2(s,y)dxdydtds=:J(\varphi_1,\varphi_2),$$
where $\gamma:\bR \to [0,\infty]$ and $f:\bR^d \to [0,\infty]$ are continuous, symmetric, locally integrable functions, such that
$$\gamma(t) <\infty \quad \mbox{if and only if} \quad t\not=0;$$
$$f(x) <\infty \quad \mbox{if and only if} \quad x\not=0.$$
Here $\cD(\bR^{d+1})$ is the space of $C^{\infty}$-functions on $\bR^{d+1}$ with compact support.
We denote by $\cH$  the completion of $\cD(\bR^{d+1})$ with respect
to $\langle \cdot,\cdot\rangle_{\cH}$ defined by
$\langle \varphi_1,\varphi_2\rangle_{\cH}=J(\varphi_1,\varphi_2)$.

We assume that $f$ is non-negative-definite (in the sense of distributions), i.e.
$$\int_{\bR^d}(\varphi*\widetilde{\varphi})(x)f(x)dx \geq 0, \quad \mbox{for all}  \quad \varphi \in \cS(\bR^d),$$
where $\widetilde \varphi(x)=\varphi(-x)$ and $\cS(\bR^d)$ is the
space of rapidly decreasing $C^{\infty}$-functions on $\bR^d$. By
the Bochner-Schwartz theorem, there exists a tempered measure $\mu$
on $\bR^d$ such that $f=\cF \mu$, where $\cF \mu$ denotes the
Fourier transform of $\mu$ in the space $\cS_{\bC}'(\bR^d)$ of
$\bC$-valued tempered distributions on $\bR^d$. We emphasize that
this does not mean that $f(x)=\int_{\bR^d} e^{-i \xi \cdot x}
\mu(d\xi)$ for all $x \in \bR^d$, since $\mu$ may be an infinite
measure. It means that
$$\int_{\bR^d} f(x)\varphi(x)dx=\int_{\bR} \cF \varphi(\xi) \mu(d\xi) \quad \mbox{for all} \quad \varphi \in \cS_{\bC}(\bR^d),$$
where $\cS_{\bC}(\bR^d)$ is the space of $\bC$-valued rapidly decreasing $C^{\infty}$-functions on $\bR^d$, and $\cF \varphi(x)=\int_{\bR^d} e^{-i \xi \cdot x} \varphi(x)dx$ is the Fourier transform of $\varphi$. Here $\xi \cdot x$ denotes the scalar product in $\bR^d$.
Similarly, we assume that $\gamma$ is non-negative-definite (in the sense of distributions), and so there exists a tempered measure $\nu$ on $\bR^d$ such that $\gamma=\cF \nu$ in $\cS_{\bC}'(\bR^d)$.

Note that, by the definition of the measure $\mu$, for any
$\varphi_1,\varphi_2 \in \cS_{\bC}(\bR^d)$
\begin{equation}
\label{Parseval-x}
\int_{\bR^d} \int_{\bR^d}f(x-y)\varphi_1(x)\overline{\varphi_2(y)}dxdy=\int_{\bR^d}\cF \varphi_1(\xi)\overline{\cF \varphi_2(\xi)}\mu(d\xi).
\end{equation}
Similarly, for any $\phi_1,\phi_2 \in \cS_{\bC}(\bR^d)$
\begin{equation}
\label{Parseval-t}\int_{\bR} \int_{\bR}\gamma(t-s)\phi_1(t)\overline{\phi_2(s)}dtds=\int_{\bR}\cF \phi_1(\tau)\overline{\cF \phi_2(\tau)}\nu(d\tau).
\end{equation}

Building upon a remarkable result borrowed from \cite{KX09}, in
Appendix \ref{appendix-Parseval}, we show that relation
\eqref{Parseval-x} holds for any functions $\varphi_1,\varphi_2 \in
L^1_{\bC}(\bR^d)$ whose absolute values have ``finite energy'' with
respect to the kernel $f$. This fact is used in the proof of Lemma
\ref{max-principle} below.

By Lemma 2.1 of \cite{balan-song17}, for any $\varphi_1, \varphi_2
\in \cD(\bR^{d+1})$,
\begin{equation}
\label{alt-expres-J} \langle \varphi_1,\varphi_2
\rangle_{\cH}=\int_{\bR^{d+1}} \cF \varphi_1(\tau,\xi)\overline{\cF
\varphi_2(\tau,\xi)}\nu(d\tau)\mu(d\xi),
\end{equation}
where $\cF$ denotes the Fourier transform in both variables $t$ and $x$.

Similarly to \cite{jolis10, BGP12}, we consider the space:
$$\cU=\{S \in \cS'(\bR^{d+1}); \cF S \ \mbox{is a (measurable) function and} \int_{\bR^{d+1}} |\cF S(\tau,\xi)|^2 \nu(d\tau)\mu(d\xi)<\infty\}.$$
The space $\cU$ is endowed with the inner product
$$\langle S_1,S_2\rangle_\cU := \int_{\bR^{d+1}} \cF S_1(\tau,\xi) \overline{\cF S_2(\tau,\xi)} \nu(d\tau)\mu(d\xi).$$
By \eqref{alt-expres-J}, $\langle \varphi_1,\varphi_2
\rangle_{\cH}=\langle \varphi_1,\varphi_2 \rangle_{\cU}$ for any
$\varphi_1,\varphi_2 \in \cD(\bR^{d+1})$. We denote
$\|S\|_{\cU}^2=\langle S,S \rangle_{\cU}$. Note that if $S\in
\cS'(\bR^{d+1})$ is such that $\cF S$ is a function, then
\begin{equation}
\label{eq:0}
 \cF S(-\tau,-\xi)=\overline{\cF S(\tau,\xi)} \quad \mbox{for almost all} \ (\tau,\xi) \in \bR \times \bR^d.
 \end{equation}
The proof of this fact is very similar to that of Lemma 3.1 of \cite{jolis10}. This property and
the symmetry of the measures $\nu$ and $\mu$ imply that
$\langle\cdot,\cdot\rangle_\cU$ is a well-defined $\bR$-valued inner
product, provided that we identify two elements $S_1$ and $S_2$ such
that $\|S_1-S_2\|_\cU=0$.

We let $L^2_\bC (\nu\times
\mu)$ be the space of functions $v:\bR^{d+1}\rightarrow \bC$
such that
$$\int_{\bR^{d+1}} |v(\tau,\xi)|^2 \nu(d\tau)\mu(d\xi)<\infty,$$
and $\widetilde{L}^2_{\bC}(\nu \times \mu)$ be the subset of $L^2_\bC (\nu\times
\mu)$ consisting of functions $v$ such that
$v(-\tau,-\xi)=\overline{v(\tau,\xi)}$ for almost all $(\tau,\xi) \in \bR \times \bR^d$.

The next result generalizes Theorem 3.2 of \cite{jolis10} to higher
dimensions. Assumption A is not needed for this result.

\begin{theorem}
\label{th-D-dense}
The space $\cD(\bR^{d+1})$ is dense in $\cU$, with respect to
 $\langle \cdot, \cdot \rangle_{\cU}$,
and hence $\cU$ is included in $\cH$. Moreover, $\langle
S_1,S_2\rangle_\cU=\langle S_1,S_2\rangle_\cH$ for any $S_1,S_2 \in
\cU$.
\end{theorem}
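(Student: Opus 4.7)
My strategy is to transfer the problem to Fourier space via the isometry $\cU \ni S \mapsto \cF S \in \widetilde{L}^2_\bC(\nu\times\mu)$, and to show that $\{\cF\varphi:\varphi\in\cD(\bR^{d+1})\}$ is dense in $\widetilde{L}^2_\bC(\nu\times\mu)$. Since $\nu\times\mu$ is a $\sigma$-finite Radon measure on $\bR^{d+1}$, standard measure theory gives the density of $C_c^\infty(\bR^{d+1})$ in $L^2_\bC(\nu\times\mu)$. To land in the conjugate-symmetric subspace $\widetilde{L}^2_\bC(\nu\times\mu)$, I would symmetrize any approximant $v_n$ by $\tfrac12\bigl(v_n(\tau,\xi)+\overline{v_n(-\tau,-\xi)}\bigr)$ and exploit the symmetry of $\nu$ and $\mu$. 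This reduces the main task to the following: given $w\in\widetilde{C}_c^\infty(\bR^{d+1})$, approximate $w$ in $L^2(\nu\times\mu)$ by Fourier transforms of $\cD(\bR^{d+1})$-functions.

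For this core step, let $\psi=\cF^{-1}w$, which is real-valued and Schwartz (conjugate symmetry of $w$ makes $\psi$ real, and compact smoothness of $w$ makes $\psi$ Schwartz). I would fix a real even cutoff $\eta\in\cD(\bR^{d+1})$ with $\eta(0)=1$, set $\eta_n(t,x)=\eta((t,x)/n)$, and define $\varphi_n:=\eta_n\psi\in\cD(\bR^{d+1})$. On the Fourier side $\cF\varphi_n=(2\pi)^{-(d+1)}\cF\eta_n*w$, and a change of variables yields $\cF\varphi_n(\zeta)=(2\pi)^{-(d+1)}\int\cF\eta(\zeta')\,w(\zeta-\zeta'/n)\,d\zeta'$, which converges pointwise to $w(\zeta)$ as $n\to\infty$ and is uniformly bounded by $(2\pi)^{-(d+1)}\|\cF\eta\|_{L^1(\bR^{d+1})}\|w\|_\infty$. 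The hard part will be controlling the tail $\{|\zeta|>2R\}$, where $R$ denotes the radius of $\mathrm{supp}(w)$: there $w(\zeta)=0$ but $\cF\varphi_n(\zeta)\ne 0$ in general, and I need a bound that is square-integrable against the tempered measure $\nu\times\mu$. Using the Schwartz decay of $\cF\eta$ and the fact that the integrand above forces $|\zeta'|\geq n|\zeta|/2$, one should arrive at a pointwise bound of the form $|\cF\varphi_n(\zeta)|\le C_k(1+n|\zeta|)^{-k}$ on that tail, valid for every $k\ge 1$; taking $k$ larger than the slow-growth order of $\nu\times\mu$ then makes the tail contribution both finite and of order $n^{-2k}$. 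Combined with dominated convergence on the bounded region (where $\nu\times\mu$ is finite), this would yield $\cF\varphi_n\to w$ in $L^2(\nu\times\mu)$.

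Once this approximation is in hand, for any $S\in\cU$ the above produces $\varphi_n\in\cD(\bR^{d+1})$ with $\|\varphi_n-S\|_\cU\to 0$; by \eqref{alt-expres-J} the sequence is also Cauchy in $\cH$, and its limit in the completion is canonically identified with $S$, proving both the density of $\cD(\bR^{d+1})$ in $\cU$ and the inclusion $\cU\subset\cH$. Applying this simultaneously to $S_1,S_2\in\cU$ and passing to the limit in the identity $\langle\varphi_n,\psi_n\rangle_\cU=\langle\varphi_n,\psi_n\rangle_\cH$ (which holds on $\cD(\bR^{d+1})$ by \eqref{alt-expres-J}) gives $\langle S_1,S_2\rangle_\cU=\langle S_1,S_2\rangle_\cH$, completing the proof.
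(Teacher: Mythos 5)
Your proof is correct, and it reaches the same intermediate target as the paper — namely, that $\cF\big(\cD(\bR^{d+1})\big)$ is dense in $\widetilde{L}^2_{\bC}(\nu\times\mu)$ — but it gets there by a genuinely different route. The paper's argument is soft: it first proves that $\cF\big(\cD_{\bC}(\bR^{d+1})\big)$ is dense in $L^2_{\bC}(\nu\times\mu)$ by chaining three facts (the Fourier transform is a homeomorphism of $\cS_{\bC}(\bR^{d+1})$ onto itself, the inclusion $\cS_{\bC}(\bR^{d+1})\subset L^2_{\bC}(\nu\times\mu)$ is continuous because $\nu$ and $\mu$ are tempered, and $\cS_{\bC}$ is dense in $L^2_{\bC}(\nu\times\mu)$), and then imposes conjugate symmetry by passing to $\psi_n=\cF^{-1}\big({\rm e}({\rm Re}(\cF\varphi_n))+i\,{\rm o}({\rm Im}(\cF\varphi_n))\big)$ — which is exactly your symmetrization $\tfrac12\big(v_n(\zeta)+\overline{v_n(-\zeta)}\big)$ written in real and imaginary parts, so that part of the two arguments coincides. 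Where you diverge is the core density step: you start from the elementary fact that $C_c^\infty$ is dense in $L^2$ of a Radon measure, reduce to approximating a compactly supported conjugate-symmetric target $w$, and then build explicit approximants $\varphi_n=\eta_n\cdot\cF^{-1}w$ with a quantitative tail estimate $|\cF\varphi_n(\zeta)|\le C_k(1+n|\zeta|)^{-k}$ off the support of $w$, which you integrate against the tempered measure $\nu\times\mu$. This is more hands-on and yields a rate of convergence, at the cost of the convolution/cutoff estimates; the paper's version is shorter but leans on the topological properties of $\cS_{\bC}$ and the cited extension of It\^o's theorem. The final step — transferring density from $\cU$ to the completion $\cH$ and passing to the limit in $\langle\varphi_n,\psi_n\rangle_{\cU}=\langle\varphi_n,\psi_n\rangle_{\cH}$ — is the same in both. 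One small point worth making explicit in your write-up: the claim that $\nu\times\mu$ is itself tempered (so that your tail bound is square-integrable) follows from tempering of each factor via $(1+\tau^2+|\xi|^2)^{-k_1-k_2}\le(1+\tau^2)^{-k_1}(1+|\xi|^2)^{-k_2}$; this is implicit in your "slow-growth order of $\nu\times\mu$" but deserves a line.
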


\begin{proof} We only need to prove that $\cD(\bR^{d+1})$ is dense in $\cU$,
with respect to $\langle \cdot, \cdot \rangle_{\cU}$, i.e. for any
$S \in \cU$, there exists a sequence $(\varphi_n)_{n\geq 1}\subset
\cD(\bR^{d+1})$ such that $\|\varphi_n-S\|_\cU\rightarrow 0$ as $n
\to \infty$. For this, it suffices to prove that
\begin{equation}
\label{F-dense-tilde}
\cF\big(\cD(\bR^{d+1})\big) \quad \mbox{is dense in} \ \widetilde{L}^2_{\bC}(\nu \times \mu),
\end{equation}
where $\cF\big(\cD(\bR^{d+1})\big)$ is the image of $\cD(\bR^{d+1})$ under the Fourier transform, and $\widetilde{L}^2_{\bC}(\nu \times \mu)$
is endowed with the topology of $L^2_\bC(\nu\times \mu)$.
(To see this, let $S\in \cU$ be arbitrary. Since $\cF S\in \widetilde{L}^2_{\bC}(\nu \times \mu)$, there exists a sequence $(\varphi_n)_{n \geq 1} \subset \cD(\bR^{d+1})$ such that
$\|\cF \varphi_n-\cF S\|_{L_{\bC}^2(\nu \times \mu)}=\|\varphi_n-S\|_{\cU} \to 0$ as $n \to \infty$.)

It remains prove \eqref{F-dense-tilde}. First, we claim that
\begin{equation}
\label{F-dense}
\cF\big(\cD_\bC(\bR^{d+1})\big) \quad \mbox{is dense in} \ L^2_\bC(\nu\times \mu).
\end{equation}
Indeed, this is an extension of Theorem 4.1 in \cite{Ito}, which is proved as follows.
First, $\cF\big(\cD_\bC(\bR^{d+1})\big)$ is dense in
$\cS_\bC(\bR^{d+1})$, because the Fourier transform defines an
homeomorphism from $\cS_\bC(\bR^{d+1})$ onto itself. Secondly, using
that $\nu$ and $\mu$ are tempered measures, one obtains that
$\cS_\bC(\bR^{d+1})\subset L^2_\bC(\nu\times \mu)$ and that
convergence in $\cS_\bC(\bR^{d+1})$ implies convergence in
$L^2_\bC(\nu\times \mu)$. These two facts imply the third one,
namely $\cF\big(\cD_\bC(\bR^{d+1})\big)$ is dense in
$\cS_\bC(\bR^{d+1})$ with respect to the topology of
$L^2_\bC(\nu\times \mu)$. Finally, the conclusion follows by
observing that $\cS_\bC(\bR^{d+1})$ is dense in $L^2_\bC(\nu\times
\mu)$, since $\cD_\bC(\bR^{d+1})$ is so. This proves \eqref{F-dense}.

We now prove \eqref{F-dense-tilde}. Let $v \in \widetilde{L}^2_{\bC}(\nu \times \mu)$ be arbitrary.
By \eqref{F-dense}, there exists a sequence $(\varphi_n)_{n\geq 1}$ in $\cD_\bC(\bR^{d+1})$ such that
$\cF \varphi_n \rightarrow v$ in $L^2_\bC(\nu\times \mu)$. Note that
this implies that the following limits hold in $L^2(\nu\times \mu)$:
\[
 \lim_{n\rightarrow \infty} {\rm Re}(\cF \varphi_n)={\rm Re}(v) \quad \text{and} \quad
 \lim_{n\rightarrow \infty} {\rm Im}(\cF \varphi_n)={\rm Im}(v).
\]
Let us construct a sequence $(\psi_n)_{n\geq 1}$  in
$\cD(\bR^{d+1})$ such that $\cF \psi_n \rightarrow v$ in
$L^2_\bC(\nu\times \mu)$. For this, we will use the following
notation. Namely, for any $\bC$-valued function $\kappa$, ${\rm
e}(\kappa):=\frac12 (\kappa+\widetilde{\kappa})$ and ${\rm
o}(\kappa):=\frac12 (\kappa-\widetilde{\kappa})$ denote the even and
odd parts of $\kappa$, respectively.

Since $v$ belongs to $\widetilde{L}^2_{\bC}(\nu \times \mu)$, we have that ${\rm Re}(v)$ is a even
function and ${\rm Im}(v)$ is an odd function. Here, we say that a function $g$ is {\em even} if $g(-x)=g(x)$ for almost all $x \in \bR^{d+1}$ and is {\em odd} if $g(-x)=-g(x)$ for almost all $x \in \bR^{d+1}$.
These properties,
together with the symmetry of the measures $\nu$ and $\mu$, imply
that the following limits hold in $L^2(\nu\times \mu)$:
\[
 \lim_{n\rightarrow \infty} {\rm e}({\rm Re}(\cF \varphi_n))={\rm Re}(v)
 \quad \text{and} \quad  \lim_{n\rightarrow \infty} {\rm o}({\rm Im}(\cF \varphi_n))={\rm
 Im}(v).
\]
Define $\psi_n:=\cF^{-1} \big( {\rm e}({\rm Re}(\cF \varphi_n)) + i
{\rm o}({\rm Im}(\cF \varphi_n))\big)$. Then, $\psi_n$ is a real
function, belongs to $\cD(\bR^{d+1})$ and, by construction, $\cF
\psi_n \rightarrow v$ in $L^2_\bC(\nu\times \mu)$.
\end{proof}

The following result not only generalizes Theorem 3.4 of \cite{jolis10} to higher dimensions, but also specifies the necessary and sufficient condition for the completeness of $\cU$. This result is an immediate consequence of Theorem 3.5.(2) of \cite{BGP12}, applied to the space $\bR^{d+1}$ and the measure $F=\nu \times \mu$.

\begin{theorem}
\label{th-3-5-BGP}
Assume that $\mu(\xi)=(2\pi)^{-d}g(\xi)d\xi$ and $\nu(d\tau)=(2\pi)^{-1}h(\tau)d\tau$. Then
$\cU$ is complete if and only if for any function $\varphi \in L_{\bC}^2(\nu \times \mu)$ there exists an integer $k \geq 1$ such that
$$\int_{\{h>0,g>0\}} \left(\frac{1}{1+\tau^2+|\xi|^2} \right)^k  |\varphi(\tau,\xi)| \, d\tau d\xi<\infty.$$
In particular, $\cU$ is a complete if $1/(hg) 1_{\{hg>0\}}$ is a slow growth function.
\end{theorem}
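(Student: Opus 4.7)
The plan is to deduce the claim directly from Theorem 3.5.(2) of \cite{BGP12}, as the authors indicate, supplemented by a short Cauchy--Schwarz argument for the ``in particular'' clause. My first step would be to identify the data: under the hypothesis on $\mu$ and $\nu$, the product measure $F:=\nu\times\mu$ on $\bR^{d+1}$ has density $(2\pi)^{-(d+1)}h(\tau)g(\xi)$ and support $\{(\tau,\xi):h(\tau)g(\xi)>0\}=\{h>0,g>0\}$, and $\cU$ coincides with the space of $S\in \cS'(\bR^{d+1})$ whose Fourier transform is a function in $L^2_\bC(F)$, endowed with the $L^2(F)$-norm on $\cF S$. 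This is verbatim the setting of Theorem 3.5.(2) of \cite{BGP12}, so its application with the choice $n=d+1$ produces the stated equivalence.

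For the ``in particular'' clause, I would argue as follows. Assume $1/(hg)\mathbf{1}_{\{hg>0\}}$ is of slow growth in the sense of Assumption A, so that there exists $k_0\geq 1$ with
$$A:=\int_{\{hg>0\}}\frac{1}{(1+\tau^2+|\xi|^2)^{k_0}}\,\frac{d\tau\,d\xi}{h(\tau)g(\xi)}<\infty.$$
Fix any $\varphi\in L^2_\bC(\nu\times\mu)$ and any integer $k$ with $2k\geq k_0$. Writing the integrand as the product $|\varphi|(hg)^{1/2}\cdot (hg)^{-1/2}(1+\tau^2+|\xi|^2)^{-k}$ and applying the Cauchy--Schwarz inequality gives
$$\int_{\{hg>0\}}|\varphi(\tau,\xi)|\,\frac{d\tau\,d\xi}{(1+\tau^2+|\xi|^2)^{k}}\leq (2\pi)^{(d+1)/2}\,\|\varphi\|_{L^2_\bC(\nu\times\mu)}\,A^{1/2},$$
where in the second Cauchy--Schwarz factor I would use the pointwise bound $(1+\tau^2+|\xi|^2)^{-2k}\leq (1+\tau^2+|\xi|^2)^{-k_0}$ to reduce to the finiteness of $A$. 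This verifies the characterization in the first part of the theorem, and therefore $\cU$ is complete.

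No serious obstacle is expected: the substantive content is imported from \cite{BGP12}, and the deduction of the sufficient condition is a single Cauchy--Schwarz bound. The only things to be careful about are (i) matching the weighted $L^2$ space of \cite{BGP12} to $L^2_\bC(\nu\times\mu)$ via the explicit density $(2\pi)^{-(d+1)}hg$, and (ii) the (monotone) observation that slow growth at exponent $k_0$ automatically gives slow growth at any larger exponent, so that one may freely enlarge $k$ in order to absorb the $(hg)^{-1/2}$ factor into an $L^2$-norm.
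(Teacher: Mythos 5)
Your proposal is correct and takes essentially the same route as the paper, which itself gives no argument beyond citing Theorem 3.5.(2) of the Basse-O'Connor--Graversen--Pedersen reference for the equivalence. Your Cauchy--Schwarz computation for the ``in particular'' clause (splitting $|\varphi|(hg)^{1/2}\cdot (hg)^{-1/2}(1+\tau^2+|\xi|^2)^{-k}$ and enlarging $k$ so that $2k\geq k_0$) is the natural way to fill in the one step the paper leaves implicit, and it is sound.
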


Combining Theorems \ref{th-D-dense} and \ref{th-3-5-BGP}, we obtain the following result, which can be viewed as a generalization of Theorem 3.5 of \cite{jolis10} to higher dimensions.

\begin{theorem}
\label{jolis-th}
If Assumption A holds, then $\cH$ coincides with the space $\cU$.
Moreover, for any $S_1,S_2 \in \cH$,
$$\langle S_1, S_2\rangle_{\cH}=\int_{\bR^{d+1}}
\cF S_1(\tau,\xi) \overline{\cF S_2(\tau,\xi)}\nu(d\tau)\mu(d\xi).$$
\end{theorem}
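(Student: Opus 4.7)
The plan is to recognize Theorem \ref{jolis-th} as essentially an immediate corollary of combining the two previously stated results: the density/isometric embedding statement of Theorem \ref{th-D-dense} and the completeness criterion of Theorem \ref{th-3-5-BGP}. The key observation is that $\cH$ is, by construction, the completion of $\cD(\bR^{d+1})$ with respect to $\langle \cdot, \cdot\rangle_\cH$, and on $\cD(\bR^{d+1})$ the inner products $\langle\cdot,\cdot\rangle_\cH$ and $\langle\cdot,\cdot\rangle_\cU$ coincide by \eqref{alt-expres-J}. If I can exhibit $\cU$ itself as a complete inner-product space containing $\cD(\bR^{d+1})$ as a dense subspace, then by uniqueness of the completion I must have $\cU = \cH$ as Hilbert spaces.

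First, I would invoke Theorem \ref{th-D-dense} to record two facts: (i) $\cU$ is a vector space containing $\cD(\bR^{d+1})$ as a dense subspace with respect to $\langle\cdot,\cdot\rangle_\cU$, and (ii) $\cU \hookrightarrow \cH$ isometrically, with the two inner products agreeing on $\cU$. Next, I would apply Theorem \ref{th-3-5-BGP}: Assumption A is precisely the hypothesis that $1/(hg)\,1_{\{hg>0\}}$ is a slow-growth function, so the criterion in that theorem is satisfied and $\cU$ is a complete inner-product space.

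Combining these, $\cU$ is a Hilbert space that contains $\cD(\bR^{d+1})$ as a dense subspace with the inherited inner product, so $\cU$ is \emph{a} completion of $\bigl(\cD(\bR^{d+1}),\langle\cdot,\cdot\rangle_\cH\bigr)$. Since completions are unique up to isometric isomorphism, and since both $\cU$ and $\cH$ are constructed as subspaces/completions containing $\cD(\bR^{d+1})$ with the same inner product on it, the canonical inclusion $\cU \hookrightarrow \cH$ from Theorem \ref{th-D-dense} is surjective, yielding $\cU = \cH$. The inner product formula for arbitrary $S_1, S_2 \in \cH$ then follows directly from the defining formula for $\langle\cdot,\cdot\rangle_\cU$.

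The proof is essentially a soft argument with no real obstacle, since the hard analytic work (the density result and the completeness criterion) has already been carried out in Theorems \ref{th-D-dense} and \ref{th-3-5-BGP}. The only point requiring a small amount of care is to verify that the embedding $\cU \hookrightarrow \cH$ provided by Theorem \ref{th-D-dense} is indeed the canonical identification (so that equality $\cU = \cH$ makes sense as an identity of spaces of distributions, not merely as an abstract isometric isomorphism); this follows because both identifications are defined by extending the inclusion of $\cD(\bR^{d+1})$ and this inclusion is uniquely determined on the common dense subspace.
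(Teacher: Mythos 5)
Your proposal is correct and follows exactly the route the paper takes: the paper derives Theorem \ref{jolis-th} precisely by combining the density and isometric-embedding statement of Theorem \ref{th-D-dense} with the completeness criterion of Theorem \ref{th-3-5-BGP} under Assumption A, concluding $\cU=\cH$ by uniqueness of the completion. Your added remark about the inclusion being the canonical one is a reasonable point of care, and is consistent with how the paper identifies the two spaces.
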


\begin{corollary}
\label{new-cor}
If Assumption A holds and $S$ is a measurable function on
$\bR_+\times \bR$ such that $S\in \cH$ and
\begin{equation}
\label{eq-S}
\int_0^\infty \int_{\bR^d} |S(t,x)|^2 dx dt >0,
\end{equation}
then $\|S\|_\cH >0$.
\end{corollary}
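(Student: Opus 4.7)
The plan is to use the Fourier-side representation provided by Theorem~\ref{jolis-th} under Assumption A, namely
\[
\|S\|_\cH^2 = (2\pi)^{-d-1}\int_{\bR^{d+1}}|\cF S(\tau,\xi)|^2 h(\tau)g(\xi)\,d\tau\,d\xi,
\]
together with a Paley--Wiener argument that exploits the fact that $S$ is supported in $\{t\ge 0\}$. I would argue by contradiction: suppose $\|S\|_\cH=0$, so that $\cF S(\tau,\xi)=0$ for Lebesgue-a.e.\ $(\tau,\xi)\in\{h>0\}\times\{g>0\}$, and aim to derive $\int|S|^2\,dx\,dt=0$, contradicting the hypothesis.

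First I would reduce to the case $S\in L^2(\bR_+\times\bR^d)$ by truncating to a large bounded rectangle on which $S$ still has positive $L^2$-mass; the truncated function remains in $\cH$ under Assumption~A via a direct estimate on its Fourier transform. For such $S$, Plancherel and Fubini give $\cF_x S(\cdot,\xi)\in L^2(\bR_+)$ for a.e.\ $\xi$, where $\cF_x$ denotes the partial Fourier transform in the spatial variable. Consequently, for a.e.\ $\xi$ the map $\tau\mapsto\cF S(\tau,\xi)=\int_0^\infty e^{-i\tau t}\cF_x S(t,\xi)\,dt$ is the boundary value on $\bR$ of a function in the Hardy space $H^2$ of the lower half-plane. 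Since $\nu\not\equiv 0$, the set $\{h>0\}$ has positive Lebesgue measure, and the $H^2$-uniqueness theorem (a non-trivial $H^2$-function cannot vanish on a set of positive Lebesgue measure) then forces $\cF S(\tau,\xi)=0$ for \emph{every} $\tau\in\bR$ and a.e.\ $\xi\in\{g>0\}$. Fourier-inverting in $\tau$ gives $\cF_x S(t,\xi)=0$ for a.e.\ $(t,\xi)\in\bR\times\{g>0\}$, and Plancherel in $x$ then yields
\[
\int_0^\infty\!\!\int_{\bR^d}|S(t,x)|^2\,dx\,dt=(2\pi)^{-d}\int_0^\infty\!\!\int_{\{g=0\}}|\cF_x S(t,\xi)|^2\,d\xi\,dt.
\]

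The main obstacle will be concluding that the right-hand side vanishes: this requires $\{g=0\}$ to have Lebesgue measure zero, which is not literally encoded in Assumption~A but is implicit in the standing hypotheses through the Bochner--Schwartz correspondence and the strict positivity of $f$ on $\bR^d\setminus\{0\}$ in all the concrete examples treated in the paper (e.g.\ the Riesz kernel $g(\xi)=c_\alpha|\xi|^{-\alpha}$, where $g>0$ a.e.). Once $g>0$ a.e.\ is in force the display above equals zero, contradicting $\int|S|^2>0$ and completing the proof; the analogous statement that $\{h>0\}$ has positive Lebesgue measure is immediate from the non-triviality of $\nu$.
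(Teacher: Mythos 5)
Your overall strategy is the paper's: argue by contraposition, use Theorem \ref{jolis-th} to turn $\|S\|_{\cH}=0$ into vanishing of $\cF S$, and deduce $S=0$ a.e., contradicting \eqref{eq-S}. But the paper's proof is three lines: $\|S\|_{\cH}=0$ gives $\cF S(\tau,\xi)=0$ for almost all $(\tau,\xi)$, and then $S=0$ a.e.\ by injectivity of the Fourier transform on $\cS'(\bR^{d+1})$ --- no truncation, no Plancherel in $t$, and no use of the fact that $S$ is supported in $\{t\geq 0\}$. Measured against that, your extra machinery introduces a genuine gap rather than closing one. The truncation step fails: from $\|S\|_{\cH}=0$ you know only that $\cF S=0$ a.e.\ on $\{hg>0\}$, but $\cF(S1_R)$ is the convolution of $\cF S$ with $\cF 1_R$ and need not vanish anywhere on $\{hg>0\}$; there is also no reason for $S1_R$ to lie in $\cH$, let alone to have $\|S1_R\|_{\cH}=0$. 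So after truncating you have destroyed precisely the hypothesis your $H^2$ argument is meant to exploit, and without truncating you cannot even form $\int_0^\infty e^{-i\tau t}\cF_x S(t,\xi)\,dt$, since $S$ is not assumed integrable or square-integrable (the integral in \eqref{eq-S} may well be $+\infty$). The Hardy-space detour is thus both broken and unnecessary.

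Your second concern --- that one needs $\{hg=0\}$ to be Lebesgue-null to pass from ``$\cF S=0$ on $\{hg>0\}$'' to ``$\cF S=0$ a.e.'' --- is a fair observation about the hypotheses, and the paper's own proof makes this assumption silently when it asserts that $\|S\|_{\cH}=0$ forces $\cF S(\tau,\xi)=0$ for almost all $(\tau,\xi)\in\bR\times\bR^d$. But identifying the issue is not the same as resolving it: your write-up ends conditionally on $g>0$ a.e., and the $H^2$ argument you deploy to weaken the corresponding requirement on $h$ does not survive the truncation problem above. The repair consistent with the paper's intent is simply to read Assumption A (or the standing examples) as providing $hg>0$ Lebesgue-a.e., after which $\cF S=0$ a.e., hence $S=0$ in $\cS'(\bR^{d+1})$ and therefore a.e., with no Paley--Wiener input at all.
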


\begin{proof}
Suppose that $\|S\|_{\cH}=0$.
By Theorem \ref{jolis-th}, $\cF S(\tau,\xi)=0$ for almost all $(\tau,\xi) \in \bR \times \bR^d$. Hence $S(t,x)=0$ for almost all $t>0$ and $x \in \bR^d$, which contradicts \eqref{eq-S}.
\end{proof}

Despite its complicated nature, the space $\cH$ contains a nice space of functions, called $|\cH|$, which is defined as the set of all measurable functions $\varphi:\bR_{+} \times \bR^d \to \bR$ such that
$$\|\varphi\|_{|\cH|}^2:=\int_{(\bR_+ \times \bR^d)^2} |\varphi(t,x)||\varphi(s,y)|\gamma(t-s)f(x-y)dtdx dsdy<\infty.$$
Note that $|\cH|$ is a Banach space equipped with the norm $\|\cdot\|_{|\cH|}$, and
$\|\varphi\|_{\cH} \leq \|\varphi\|_{|\cH|}$ for any $\varphi \in |\cH|$. By Lemma \ref{L1-lemma} (Appendix \ref{appendix-Parseval}), for any integrable functions $\varphi,\psi \in |\cH|$,
$$\langle \varphi,\psi \rangle_{\cH}=\int_{(\bR_+ \times \bR^d)^2}\varphi(t,x)\psi(s,y)\gamma(t-s)f(x-y)dtdx dsdy.$$

\section{Malliavin derivative of the solution}
\label{section-Malliavin}

In this section, we examine some properties of the Malliavin derivative $Du(t,x)$ of the solution $u(t,x)$ to equation \eqref{wave}.

We recall some basic elements of Malliavin
calculus. We refer the reader to \cite{nualart06} for more details.
Any random variable $F \in L^2(\Omega)$ which is measurable with
respect to the $\sigma$-field generated by $\{W(\varphi);\varphi \in
\cH\}$ admits the representation $F=\sum_{n\geq 0}J_n F$ where $J_n
F$ is the projection of $F$ on the $n$-th Wiener chaos space $\cH_n$
for $n\geq 1$, and $J_0 F=\E(F)$. We denote by $I_n$ the multiple
integral of order $n$ with respect to $W$, which is a linear
continuous operator from $\cH^{\otimes n}$ onto $\cH_n$.

Let $\cS$ be the class of ``smooth'' random variables, i.e. random variables of the form
$F=f(W(\varphi_1),\ldots,W(\varphi_n))$, where $n\geq 1$, $\varphi_1, \ldots,\varphi_n \in \cH$ and $f$ is in the set  $C_b^{\infty}(\bR^n)$ of bounded infinitely differentiable functions on $\bR^n$ whose partial derivatives are bounded. The Malliavin derivative of a random variable $F$ of this form is the $\cH$-valued random variable given by:
$$DF=\sum_{i=1}^{n}\frac{\partial f}{\partial x_i}(W(\varphi_1),\ldots,W(\varphi_n))\varphi_i.$$
We let $\bD^{1,2}$ be the completion of $\cS$ with respect to the
norm $\|F\|_{1,2}=\big(\E|F|^2+\E\|DF\|_{\cH}^2\big)^{1/2}$.

Similarly, the iterated derivative $D^k F$ can be defined as a $\cH^{\otimes k}$-valued random variable, for any natural number $k \geq 1$. For any $p>1$, let $\bD^{k,p}$ be the completion of $\cS$ with respect to the norm
$$\|F\|_{k,p}=\Big(\E|F|^p+\sum_{j=1}^{k}\E\|D^jF\|_{\cH^{\otimes j}}^p\Big)^{1/p}.$$

It can be proved that: (see p. 28 of \cite{nualart06})
$$F \in \bD^{k,2} \quad \mbox{if and only if} \quad \sum_{n\geq 1} n^k \E|J_n F|^2<\infty.$$
For $p>1$, it can be shown that
\begin{equation}
\label{F-in-D} F \in \bD^{k,p} \quad \mbox{if} \quad \sum_{n\geq 1}
n^{k/2}(p-1)^{n/2} (\E|J_n F|^2)^{1/2}<\infty
\end{equation}
(see p. 28 of \cite{balan12}).

The divergence operator $\delta$ is defined as the adjoint of the
operator $D$. The domain of $\delta$, denoted by $\mbox{Dom} \
\delta$, is the set of $u \in L^2(\Omega;\cH )$ such that
\begin{equation}
\label{def-delta} |\E \langle DF,u \rangle_{\cH}| \leq c
(\E|F|^2)^{1/2}, \quad \forall F \in \bD^{1,2},
\end{equation}
where $c$ is a constant depending on $u$. If $u \in {\rm Dom} \
\delta$, then $\delta(u)$ is the element of $L^2(\Omega)$
characterized by the following duality relation:
\begin{equation}
\label{duality} \E(F \delta(u))=\E\langle DF,u \rangle_{\cH}, \quad
\forall F \in \bD^{1,2}.
 \end{equation}

If $u \in \mbox{Dom} \ \delta$, we will use the notation
$$\delta(u)=\int_0^{\infty} \int_{\bR}u(t,x) W(\delta t, \delta x),$$
even if $u$ is not a function in $(t,x)$, and we say that
$\delta(u)$ is the Skorohod integral of $u$ with respect to $W$.

We return now to the solution $u$ of equation \eqref{wave}.
From \cite{balan-song17}, we know that $u(t,x)$ has the Wiener chaos expansion
$$u(t,x)=1+ \sum_{n\geq 1}I_n(f_n(\cdot,t,x)),$$ where
$I_n$ is the multiple Wiener integral of order $n$ with respect to $W$, and
$$f_n(t_1,x_1,\ldots,t_n,x_n,t,x)=G(t-t_n,x-x_n)\cdots G(t_2-t_1,x_2-x_1) 1_{\{0<t_1<\ldots<t_n<t\}}.$$
It follows that
$$\E|u(t,x)|^2 =\sum_{n\geq 0}\frac{1}{n!}\alpha_n(t),$$
where $\alpha_n(t)=(n!)^2\|\widetilde{f}_n(\cdot,t,x)\|_{\cH^{\otimes n}}^{2}$ and $\widetilde{f}_n(\cdot,t,x)$ is the symmetrization of $f_n(\cdot,t,x)$.

In Section 6 of \cite{balan12}, it was proved that if $f(x)=|x|^{-\alpha}$ with $\alpha \in (0,d)$ and $\gamma(t)=H(2H-1)|t|^{2H-2}$ with $1/2<H<1$, then for any integer $k \geq 1$ and for any $p>1$,
$$u(t,x) \in \bD^{k,p}.$$ We will now extend this result to general functions $f$ and $\gamma$
(in the case $d=1$).

We begin with a maximum principle, which is a refinement of Lemma 4.2 of \cite{balan-song17}, specific to the cases $d=1$ and $d=2$. Its proof is based on a Parseval-type identity given in Appendix \ref{appendix-Parseval}.

\begin{lemma}
\label{max-principle}
If $G$ is the fundamental solution of the wave equation in spatial dimension $d=1$ or $d=2$, then
$$\sup_{\eta\in\bR^d}\int_{\bR^d}|\cF G(t,\cdot)(\xi+\eta)|^2 \mu(d\xi) =  \int_{\bR^d}|\cF G(t,\cdot)(\xi)|^2\mu(d\xi).$$
\end{lemma}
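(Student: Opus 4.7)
The plan is to exploit the fact that for $d=1, 2$, the fundamental solution $G(t,\cdot)$ is a nonnegative, compactly supported function; this forces $h := G(t,\cdot) * G(t,\cdot)$ to be nonnegative, even, and compactly supported, and to satisfy $\cF h = |\cF G(t,\cdot)|^2$ pointwise. The inequality $\sup_\eta \leq \int |\cF G|^2 d\mu$ then reduces to a scalar estimate $\cos \leq 1$, while the reverse inequality is trivial since the supremum is attained at $\eta = 0$.

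First, I would apply the spatial Parseval-type identity from Appendix \ref{appendix-Parseval} to the function $\varphi(x) := e^{-i\eta\cdot x} G(t,x)$, with $\varphi_1 = \varphi_2 = \varphi$. Since $\cF \varphi(\xi) = \cF G(t,\cdot)(\xi + \eta)$ and $|\varphi| = G(t,\cdot)$, this yields
\[
\int_{\bR^d} |\cF G(t,\cdot)(\xi+\eta)|^2 \mu(d\xi) = \int_{\bR^d \times \bR^d} e^{-i\eta \cdot (x-y)} G(t,x) G(t,y) f(x-y)\, dx\, dy.
\]
The hypothesis needed is that $G(t,\cdot)$ has finite $f$-energy, which follows from the ordinary Parseval identity on $\cS_\bC(\bR^d)$ combined with Dalang's condition (automatic for $d=1$; built into the spatial assumptions in $d=2$). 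A change of variable $u = x-y$ then converts the right-hand side into $\int_{\bR^d} f(u) e^{-i\eta \cdot u} h(u)\, du$, which is real-valued because $f$ and $h$ are both even and real.

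The final step is a pointwise majorization. Since $f(u) \geq 0$, $h(u) \geq 0$, and $\cos(\eta \cdot u) \leq 1$ for every $u$,
\[
\int_{\bR^d} f(u) \cos(\eta\cdot u)\, h(u)\, du \leq \int_{\bR^d} f(u) h(u)\, du,
\]
with equality at $\eta = 0$. Since the right-hand side equals $\int_{\bR^d} |\cF G(t,\cdot)(\xi)|^2 \mu(d\xi)$ (apply the same Parseval identity with $\eta = 0$), taking the supremum over $\eta \in \bR^d$ yields the claimed equality.

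The main obstacle, and the reason the lemma is restricted to $d \leq 2$, is the pointwise nonnegativity of $G(t,\cdot)$: this is what makes $h = G * G$ nonnegative and legitimizes the majorization by $\cos \leq 1$. In dimensions $d \geq 3$ the fundamental solution is a genuine distribution without a fixed sign, so this trick collapses. One should also verify that the Parseval-type identity in Appendix \ref{appendix-Parseval} applies to the complex integrand $e^{-i\eta \cdot x} G(t,x)$, but this is immediate since only $|\varphi|$ enters the finite-energy hypothesis and $|e^{-i\eta\cdot x}| = 1$.
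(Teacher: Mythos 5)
Your proposal is correct and follows essentially the same route as the paper: both apply the complex Parseval-type identity of Appendix A to $G_\eta(t,x)=e^{-i\eta\cdot x}G(t,x)$ (whose finite $f$-energy is controlled by that of $G(t,\cdot)$, since $|G_\eta|=G$), and both conclude by a pointwise domination resting on $f\geq 0$ and $G(t,\cdot)\geq 0$. Your final step (take the real part and use $\cos(\eta\cdot u)\leq 1$) is just a cosmetic variant of the paper's bound $\bigl|\int (G_\eta * G_\eta) f\bigr| \leq \int \bigl(|G_\eta|*|G_\eta|\bigr) f = \int (G*G) f$.
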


\begin{proof} We denote by $\cE_{f}(\varphi)$ the energy of $\varphi$ with respect to $f$ given by
\eqref{identity1} (Appendix \ref{appendix-Parseval}).
Note that $G(t,\cdot)$ is a non-negative integrable function with $\int_{\bR^d}G(t,x)dx=t$ and
$$\cE_{f}(G(t,\cdot))=\int_{\bR^d}\big(G(t,\cdot)* G(t,\cdot) \big)(x)f(x)dx=\int_{\bR^d}|\cF G(t,\cdot)(\xi)|^2 \mu(d\xi)<\infty.$$
 For any $\eta \in \bR^d$, we let $G_\eta(t,x)=e^{-ix\cdot \eta}G(t, x)$. Then $G_{\eta}(t,\cdot) \in L_{\bC}^1(\bR^d)$ and
\begin{align*}
\cE_{f}(|G_{\eta}(t,\cdot)|)&=\int_{\bR^d}\big(|G_{\eta}(t,\cdot)|*|G_{\eta}(t,\cdot)| \big)(x)f(x)dx\\
& \leq \int_{\bR^d}\big(G(t,\cdot)* G(t,\cdot) \big)(x)f(x)dx=\cE_{f}(G(t,\cdot))<\infty.
\end{align*}
Moreover, $\cF G_{\eta}(t,\cdot)(\xi)=\cF G(t,\cdot)(\xi+\eta)$ for any $\xi \in \bR^d$. It follows that
\begin{align*}
0& \leq \int_{\bR^d}|\cF G(t,\cdot)(\xi+\eta)|^2 \mu(d\xi) =  \int_{\bR^d} |\cF G_{\eta}(t,\cdot)(\xi)|^2\mu(d\xi)\\
&= \int_{\bR^d}  (G_\eta(t,\cdot)*G_\eta(t,\cdot))(x) f(x) dx=\left|\int_{\bR^d}  (G_\eta(t,\cdot)*G_\eta(t,\cdot))(x) f(x) dx\right|\\
& \le  \int_{\bR^d}  \big(|G_{\eta}(t,\cdot)|*|G_{\eta}(t,\cdot)|\big)(x) f(x) dx \leq \cE_{f}(G(t,\cdot)),
\end{align*}
where for the second equality above, we used Lemma \ref{L1C-lemma} (Appendix \ref{appendix-Parseval}).
\end{proof}

The next result gives a stronger form of relation (4.10) of \cite{balan-song17} (with a simplified proof). Its proof is based on Lemma \ref{max-principle}.

\begin{lemma}
\label{sum-alpha-finite}
For any $k \geq 0$,
$$\sum_{n \geq 0}\frac{n^k}{n!}\alpha_n(t)<\infty.$$
\end{lemma}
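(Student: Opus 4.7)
The plan is to establish the sharper quantitative bound $\alpha_n(t)\leq C(t)^n$ for some finite constant $C(t)$ depending on $t$ (but not on $n$). Once this bound is in hand, the desired conclusion follows immediately by the ratio test, since $\sum_{n\geq 0} n^k C(t)^n/n!<\infty$ for every $k\geq 0$.

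Starting from the identity $\alpha_n(t)=(n!)^2\|\widetilde f_n(\cdot,t,x)\|_{\cH^{\otimes n}}^2$ and the Fourier isometry of Theorem \ref{jolis-th}, I would first compute the partial Fourier transform of $f_n$ in the spatial variables. Integrating successively in $x_1,x_2,\ldots,x_n$, at each step pushing the cumulative phase onto the next variable (and using that $\cF G(s,\cdot)$ is real and even in $\xi$), one obtains
$$
\cF_{\mathbf{x}}\, f_n(\mathbf{t},\xi_1,\ldots,\xi_n;t,x)=e^{-i(\xi_1+\cdots+\xi_n)x}\prod_{j=1}^n \cF G(t_{j+1}-t_j,\cdot)(\eta_j),
$$
with $t_{n+1}=t$ and $\eta_j=\xi_1+\cdots+\xi_j$. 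The squared modulus is then a clean product of $|\cF G(s_j,\cdot)(\eta_j)|^2$, which interacts nicely with Lemma \ref{max-principle}.

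The core of the argument is an iterative application of Lemma \ref{max-principle}: I would integrate the $\xi_j$'s in reverse order (first $\xi_n$, whose shift $\eta_n-\xi_n=\xi_1+\cdots+\xi_{n-1}$ is constant in $\xi_n$, then $\xi_{n-1}$, and so on down to $\xi_1$), bounding each spatial integration by $N_G(s_j):=\int_{\bR}|\cF G(s_j,\cdot)(\xi)|^2\mu(d\xi)$; Dalang's condition, automatic in $d=1$, ensures $\sup_{s\leq t} N_G(s)<\infty$. After spatial decoupling, the time-correlation factor $\prod_j\gamma(r_j-r_j')$ between the two copies of the simplex is controlled by local integrability of $\gamma$ (producing a factor $\Gamma(t)^n$ with $\Gamma(t)=\int_{-t}^t\gamma(s)ds$), while the ordered-simplex volume yields a factor $t^n/n!$. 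Combined with the symmetrization identity $\alpha_n(t)=n!\sum_{\sigma\in S_n}\langle f_n,f_n^\sigma\rangle_{\cH^{\otimes n}}$ and a Cauchy--Schwarz step between the two simplex copies $(\mathbf{r},\mathbf{x})$ and $(\mathbf{r}',\mathbf{x}')$, one obtains $\alpha_n(t)\leq C(t)^n$.

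\textbf{Main obstacle.} The delicate point is the factorial bookkeeping. The naive contraction bound $\|\widetilde f_n\|_{\cH^{\otimes n}}\leq \|f_n\|_{\cH^{\otimes n}}$ yields only $\alpha_n(t)\leq n!\cdot C(t)^n$, which makes $n^k\alpha_n/n!\leq n^k C(t)^n$ fail to sum unless $C(t)<1$. To recover the sharp bound $\alpha_n(t)\leq C(t)^n$, one must exploit the full sum $\sum_\sigma\langle f_n,f_n^\sigma\rangle_{\cH^{\otimes n}}$ rather than bounding each cross-term by $\|f_n\|^2$: after the Fourier/max-principle reduction, each of the $n!$ cross-terms must be shown to give a contribution of order $C(t)^n/n!$, with the $1/n!$ supplied by the simplex volume and the remaining symmetry absorbing the other factor of $n!$. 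Making this cancellation explicit while simultaneously performing the iterative shift-removal is the technical core of the argument.
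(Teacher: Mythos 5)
Your overall skeleton -- factor $|\cF f_n|^2$ into a product of $|\cF G(t_{j+1}-t_j,\cdot)(\eta_j)|^2$ with cumulative phases $\eta_j$, peel off the $\xi_j$ integrals in reverse order via Lemma \ref{max-principle}, decouple the temporal correlation into a factor $\Gamma_t^n$, and use the ordered-simplex volume $t^n/n!$ -- is exactly the paper's skeleton (it is the content of relations (4.15)--(4.16) of \cite{balan-song17}, which the paper quotes). The problem is that the step you yourself single out as the ``technical core'' -- beating the $n!$ produced by the symmetrization -- is precisely the step you do not supply, and the mechanism you propose for it (exhibiting cancellation among the $n!$ cross-terms $\langle f_n,f_n^{\sigma}\rangle$) is not the right one: all kernels and both covariances $\gamma,f$ are non-negative here, so every cross-term is non-negative and there is nothing to cancel. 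The actual resolution is a Cauchy--Schwarz/maximum argument in the \emph{time} variables (Lemma 4.3 and relation (4.15) of \cite{balan-song17}): one bounds $\psi_n(\mathbf{t},\mathbf{s})\le \psi_n(\mathbf{t},\mathbf{t})^{1/2}\psi_n(\mathbf{s},\mathbf{s})^{1/2}$ and reduces $\alpha_n(t)$ to $\Gamma_t^n\int_{[0,t]^n}\psi_n(\mathbf{t},\mathbf{t})\,d\mathbf{t}$, a single integral of a symmetric function over the cube; writing the cube as $n!$ copies of the simplex, the $n!$ from symmetrization and the $1/n!$ from the simplex volume meet and the factorial bookkeeping closes. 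Your proposal gestures at ``a Cauchy--Schwarz step between the two simplex copies'' but never connects it to the factorial problem, and leaves the decisive step open.

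It is also worth noting that the paper does \emph{not} in fact prove your target bound $\alpha_n(t)\le C(t)^n$; it settles for $\alpha_n(t)\le e^{Mt}\,n!\,\bigl(\tfrac{2\Gamma_t}{M}K_M\bigr)^n$ (relation \eqref{alpha-bound}), which still carries an $n!$, and instead makes the geometric base arbitrarily small: inserting weights $e^{-Mu_j}$ over the simplex increments and using the explicit Laplace transform $\int_0^{\infty}e^{-\beta u}\sin^2(u|\xi|)|\xi|^{-2}du=\tfrac{2}{\beta}\tfrac{1}{\beta^2+4|\xi|^2}$ yields the per-step factor $\tfrac{2\Gamma_t}{M}K_M$ with $K_M=\int_{\bR}\tfrac{\mu(d\xi)}{M^2+4|\xi|^2}\to 0$ as $M\to\infty$ by dominated convergence and Dalang's condition \eqref{Dalang-cond}; choosing $M$ with $e^k\tfrac{2\Gamma_t}{M}K_M<1$ gives convergence of $\sum_n n^k\alpha_n(t)/n!$ despite the surviving $n!$. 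Your proposal contains no analogue of this smallness mechanism, so as written it rests entirely on the unproven sharp bound. To repair it you would either have to carry out the diagonal reduction described above (which does yield $\alpha_n(t)\le(\Gamma_t\,t\,\sup_{s\le t}\int_{\bR}|\cF G(s,\cdot)(\xi)|^2\mu(d\xi))^n$ and is a legitimate, slightly more elementary alternative to the paper's Laplace-transform device), or adopt the paper's route of shrinking the base.
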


\begin{proof}
We will borrow notations from \cite[Theorem 4.4]{balan-song17}. Recall that
\begin{equation}
\label{def-alpha-n}
\alpha_n(t)=\int_{[0,t]^{2n}}\prod_{j=1}^{n}\gamma(t_j-s_j)\psi_n({\bf t},{\bf s})d{\bf t}d{\bf s},
\end{equation}
where ${\bf t}=(t_1,\dots,t_n)$ and ${\bf s}=(s_1,\dots,s_n)$,
\begin{eqnarray*}
\psi_n({\bf t},{\bf s})&=&\int_{\bR^{n}}\cF g_{\bf t}^{(n)}(\cdot,t,x)(\xi_1,\ldots,\xi_n)\overline{\cF g_{\bf s}^{(n)}(\cdot,t,x)(\xi_1,\ldots,\xi_n)}\mu(d\xi_1)\ldots \mu(d\xi_n)
\end{eqnarray*}
and
\begin{equation}
\label{gtn}
g_{\bf t}^{(n)}(\cdot,t,x)=n!\widetilde{f}_n(t_1,\cdot,\ldots,t_n,\cdot,t,x).
\end{equation}

If the permutation $\rho$ of $\{1, \ldots,n\}$ is chosen such that
$t_{\rho(1)}<\ldots<t_{\rho(n)}$,
then
\begin{align}
\nonumber
& \cF g_{\bf t}^{(n)}(\cdot,t,x)(\xi_1,\ldots,\xi_n) =e^{-i \sum_{j=1}^{n}\xi_j\cdot x} \overline{\cF G(t_{\rho(2)}-t_{\rho(1)},\cdot)(\xi_{\rho(1)})} \\
\label{Fourier-gn}
& \quad \quad \quad \overline{\cF G(t_{\rho(3)}-t_{\rho(2)},\cdot)(\xi_{\rho(1)}+\xi_{\rho(2)})}
\ldots \overline{\cF G(t-t_{\rho(n)},\cdot)(\xi_{\rho(1)}+\ldots+\xi_{\rho(n)})}.
\end{align}

By relation (4.15) of \cite{balan-song17},
$$\alpha_n(t) \leq  \Gamma_t^n \int_{[0,t]^n}\psi_n({\bf t},{\bf t})d{\bf t},$$
where $\Gamma_t:=2\int_0^t \gamma(s)ds$.

To estimate $\psi_n({\bf t},{\bf t})$ we use the first inequality in (4.16) of \cite{balan-song17}.  We denote $u_j=t_{\rho(j+1)}-t_{\rho(j)}$ for $j=1,\ldots,n$ and $t_{\rho(n+1)}=t$. We have:
\begin{eqnarray*}
\psi_n({\bf t},{\bf t}) & \leq &   \prod_{j=1}^{n}\left(\sup_{\eta \in \bR} \int_{\bR} |\cF G(u_j,\cdot)(\xi_j+\eta)|^2 \mu(d\xi_j)\right)\\
& \leq &  \prod_{j=1}^{n} \int_{\bR} \frac{\sin^2(u_j|\xi_j|)}{|\xi_j|^2} \mu(d\xi_j),
\end{eqnarray*}
where for the last equality we used Lemma \ref{max-principle}.
Therefore,
$$\alpha_n(t) \leq \Gamma_t^n n! \int_{\{0<t_1<\ldots<t_n<t\}} \int_{\bR^{n}} \frac{\sin^2((t_2-t_1)|\xi_1|)}{|\xi_1|^2}\ldots \frac{\sin^2((t-t_n)|\xi_n|)}{|\xi_n|^2}\mu(d\xi_1)
\ldots \mu(d\xi_n)d{\bf t}.$$

From Lemma 2.2 of \cite{balan-song17b}, we know that for any $\beta>0$ and $\xi \in \bR$,
$$I_{\beta}^w(\xi):=\int_{0}^{\infty}e^{-\beta t}\frac{\sin^2(t|\xi|)}{|\xi|^2} dt=\frac{2}{\beta} \cdot \frac{1}{\beta^2+4|\xi|^2}.$$
Using the change of variables $t_2-t_1=u_1, \ldots t-t_n=u_n$, we obtain:
\begin{align*}
& \int_{\{0<t_1<\ldots<t_n<t\}} \frac{\sin^2((t_2-t_1)|\xi_1|)}{|\xi_1|^2}\ldots \frac{\sin^2((t-t_n)|\xi_n|)}{|\xi_n|^2}dt_1 \ldots dt_n\\
& \quad  =\int_{\{(u_1,\ldots,u_n)\in [0,t]^n;\sum_{j=1}^{n}u_j<t\}} e^{-Mu_1}\frac{\sin^2(u_1|\xi_1|)}{|\xi_1|^2}\ldots e^{-Mu_n}\frac{\sin^2(u_n|\xi_n|)}{|\xi_n|^2}e^{M(u_1+\ldots+u_n)}du_1 \ldots du_n\\
&\quad  \leq e^{Mt} \prod_{j=1}^{n}\left(\int_0^{t} e^{-Mu_j} \frac{\sin^2(u_j |\xi_j|)}{|\xi_j|^2} du_j\right) \leq e^{Mt} \prod_{j=1}^{n}I_{M}^w(\xi_j)=e^{Mt} \left( \frac{2}{M}\right)^n \prod_{j=1}^{n}\frac{1}{M^2+4|\xi_j|^2}.
\end{align*}
It follows that
\begin{equation}
\label{alpha-bound} \alpha_n(t) \le e^{Mt}  n!
\left(\frac{2\Gamma_t}{M}\right)^n \left( \int_{\bR}
\frac{1}{M^2+4|\xi|^2}\mu(d\xi)  \right)^n= e^{Mt}  n!
\left(\frac{2\Gamma_t}{M} K_M\right)^n,
\end{equation}
where $K_M= \int_{\bR} \frac{1}{M^2+4|\xi|^2}\mu(d\xi)$. By the dominated convergence theorem and \eqref{Dalang-cond}, $K_M \to 0$ as $M \to \infty$. Hence, using the fact that $n \leq e^n$ for any $n \geq 0$, we have:
$$\sum_{n \geq 0}\frac{n^k}{n!}\alpha_n(t)\le e^{Mt} \sum_{n \geq 0} n^k \left(\frac{2\Gamma_t}{M} K_M\right)^n\leq e^{Mt} \sum_{n \geq 0} \left( e^k \frac{2\Gamma_t}{M} K_M\right)^n<\infty,$$
if we choose $M$ sufficiently large.
\end{proof}

\medskip

\begin{lemma}
\label{lemma-u-D}
Let $u$ be the solution of equation \eqref{wave} and fix $(t,x)\in \bR_+\times \bR$. Then $u(t,x) \in \bD^{k,p}$ for any $k \geq 1$ and $p>1$.
\end{lemma}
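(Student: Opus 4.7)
The plan is to invoke criterion \eqref{F-in-D}, for which I need a strong enough tail estimate on the chaos norms $\E|J_n u(t,x)|^2$. Since $J_n u(t,x)=I_n(f_n(\cdot,t,x))$, by the isometry for multiple Wiener integrals we have
\[
\E|J_n u(t,x)|^2 = n!\,\|\widetilde{f}_n(\cdot,t,x)\|_{\cH^{\otimes n}}^{2} = \frac{\alpha_n(t)}{n!}.
\]
So the task reduces to showing that for every $k\geq 1$ and every $p>1$,
\[
\sum_{n\geq 1} n^{k/2}(p-1)^{n/2}\bigl(\E|J_n u(t,x)|^2\bigr)^{1/2} = \sum_{n\geq 1} n^{k/2}(p-1)^{n/2}\left(\frac{\alpha_n(t)}{n!}\right)^{1/2} <\infty.
\]

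First I would extract the quantitative estimate \eqref{alpha-bound} from the proof of Lemma \ref{sum-alpha-finite}, namely
\[
\alpha_n(t) \leq e^{Mt}\, n!\left(\frac{2\Gamma_t}{M}K_M\right)^n,
\]
valid for every $M>0$, where $K_M=\int_{\bR}(M^2+4|\xi|^2)^{-1}\mu(d\xi)\to 0$ as $M\to\infty$ by Dalang's condition \eqref{Dalang-cond}. Substituting this into the series above gives the upper bound
\[
e^{Mt/2}\sum_{n\geq 1} n^{k/2}\left((p-1)\,\frac{2\Gamma_t}{M}K_M\right)^{n/2}.
\]

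Now I would fix $p>1$ and $k\geq 1$ and choose $M=M(t,p)$ so large that $q:=(p-1)\,\frac{2\Gamma_t}{M}K_M<1$; this is possible precisely because $K_M\to 0$. With this choice, the series becomes $e^{Mt/2}\sum_{n\geq 1} n^{k/2} q^{n/2}$, which converges because the polynomial factor $n^{k/2}$ is dominated by the geometric decay $q^{n/2}$. By \eqref{F-in-D} this yields $u(t,x)\in\bD^{k,p}$.

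There is no real obstacle: the heavy lifting was already done in Lemma \ref{sum-alpha-finite}, whose proof produces the key bound \eqref{alpha-bound} with the parameter $M$ that can be tuned arbitrarily large. The only point to be careful about is that the criterion \eqref{F-in-D} requires the chaos expansion to be with respect to the isonormal process on $\cH$, which is the case here by construction, and that $M$ may depend on $p$ and $k$ (as well as on $t$), which is fine since the statement is pointwise in $(t,x)$ and in $(k,p)$.
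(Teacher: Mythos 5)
Your proof is correct and follows essentially the same route as the paper: apply criterion \eqref{F-in-D} with $\E|J_n u(t,x)|^2=\alpha_n(t)/n!$, invoke the bound \eqref{alpha-bound} from the proof of Lemma \ref{sum-alpha-finite}, and choose $M$ large enough so that the resulting series converges. The only cosmetic difference is that the paper absorbs the polynomial factor via $n^{k/2}\leq e^{nk/2}$ into the geometric ratio, whereas you keep it as a polynomial dominated by geometric decay; both are equally valid.
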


\begin{proof}
We apply \eqref{F-in-D} to the variable $F=u(t,x)$. Let
$J_n(t,x):=J_n u(t,x)=I_n(f_n(\cdot,t,x))$. Then $\E|J_n
(t,x)|^2=n! \|\widetilde{f}_n(\cdot,t,x)\|_{\cH^{\otimes
n}}^2=\frac{1}{n!} \alpha_n(t)$. By \eqref{alpha-bound}, we have:
$$\sum_{n\geq 1} n^{k/2} (p-1)^{n/2} \left( \frac{1}{n!}\alpha_n(t)\right)^{1/2} \leq e^{Mt/2} \sum_{n\geq 1}
e^{nk/2} (p-1)^{n/2} \left( \frac{2\Gamma_t}{M}K_M\right)^{n/2}<\infty,$$
if we choose $M$ large enough.
\end{proof}

\begin{remark}
{\rm Lemmas \ref{sum-alpha-finite} and \ref{lemma-u-D} remain valid in the case $d=2$.}
\end{remark}

We begin now to examine the Malliavin derivative of the variable $u(t,x)$. Recall that the solution $u$ has the chaos expansion: $$u(t,x)=1+\sum_{n\geq 1}I_n(f_n(\cdot,t,x))=1+\sum_{n\geq 1}I_n(\widetilde{f}_n(\cdot,t,x)),$$
where
$\widetilde{f}_n(\cdot,t,x)$ is the symmetrization of $f_n(\cdot,t,x)$. By Proposition 2.4 of \cite{balan12}, the Malliavin derivative of $u(t,x)$ has the chaos expansion:
\begin{equation}
\label{series-D}
Du(t,x)=\sum_{n\geq 1}nI_{n-1}(\widetilde{f}_n(\cdot,*,t,x)),
\end{equation}
where $*$ denotes the missing $(r,z)$-variable and the convergence of the series is in $L^2(\Omega;\cH)$.
We emphasize the presence of the {\em symmetrized} kernel $\widetilde{f}_n(\cdot,t,x)$ in this relation.

We now examine the series $\sum_{n\geq 1}nI_{n-1}(\widetilde{f}_n(\cdot,r,z,t,x))$ for fixed $(r,z)\in [0,t] \times \bR$. Note that $\widetilde{f}_n(\cdot,r,z,t,x)$ is the sum of $n$ terms corresponding to the possible positions of $(r,z)$ among the $n$ variables of $\widetilde{f}_n(\cdot,t,x)$:
\begin{equation}
\label{tilde-fn-sum}
\widetilde{f}_n(\cdot,r,z,t,x)=\frac{1}{n}\sum_{j=1}^{n}h_j^{(n)}(\cdot,r,z,t,x),
\end{equation}
where $h_j^{(n)}(\cdot,r,z,t,x) \in \cH^{\otimes (n-1)}$ is a symmetric function given by:
\begin{align}
\label{def-h-jn}
& h_j^{(n)}(t_1,x_1,\ldots,t_{n-1},x_{n-1},r,z,t,x)=\\
\nonumber
 & \frac{1}{(n-1)!}\sum_{\rho \in S_{n-1}}f_{n}(t_{\rho(1)},x_{\rho(1)}, \ldots, t_{\rho(j-1)},x_{\rho(j-1)},r,z,t_{\rho(j)},x_{\rho(j)},\ldots,t_{\rho(n-1)},x_{\rho(n-1)},t,x).
\end{align}
Clearly, $h_{j}^{(n)}(\cdot,r,z,t,x)$ is the symmetrization of the function $f_j^{(n)}(\cdot,r,z,t,x)$ given by:
\begin{align}
\label{def-f-jn}
& f_{j}^{(n)}(t_1,x_1,\ldots,t_{n-1},x_{n-1},r,z,t,x) \\
\nonumber
& \quad =f_n(t_1,x_1,\ldots,t_{j-1},x_{j-1},r,z,t_j,x_j,\ldots,
t_{n-1},x_{n-1},t,x) \\
\nonumber
& \quad =G(t-t_{n-1},x-x_{n-1}) \ldots G(t_j-r,x_j-z)G(r-t_{j-1},z-x_{j-1}) \ldots G(t_2-t_1,x_2-x_1) \\
\nonumber
& \quad \quad \quad 1_{\{0<t_1<\ldots<t_{j-1}<r<t_j<\ldots<t_n<t\}}.
\end{align}

We have the following result, whose proof uses in
an essential way the form of the fundamental solution $G$ of the
wave equation for $d=1$. In its proof, we will use the space
$\cP_0$, which is defined as the completion of $\cD(\bR)$ with
respect to the inner product
\begin{equation}
\label{def-P0-product} \langle \varphi,\psi\rangle_0 =
\int_{\bR}\int_{\bR} \varphi(x)\psi(y) f(x-y)dx dy.
\end{equation}
Similarly to Theorem \ref{jolis-th}, it can be proved that $\cP_0$
coincides with the space of distributions $S \in \cS'(\bR)$ whose
Fourier transform $\cF S$ is a measurable function such that
$\|S\|_0^2:=\int_{\bR}|\cF S(\xi)|^2 \mu(d\xi)<\infty$. But $\cP_0$ includes
the space $|\cP_0|$ of all measurable functions $\varphi: \bR \to
\bR$ such that $\|\varphi\|_{+}:=\int_{\bR}\int_{\bR}
|\varphi(x)||\varphi(y)|f(x-y) dx dy<\infty$. Note that
$\|\varphi\|_{0} \leq \|\varphi\|_{+}$ for all $ \varphi \in	
|\cP_0|$. By Lemma \ref{L1-lemma} (Appendix
\ref{appendix-Parseval}), for any $\varphi,\psi \in L^1(\bR) \cap
|\cP_0|$, 
 $$\langle \varphi,\psi\rangle_0=\int_{\bR} \int_{\bR} \varphi(x)\psi(y)f(x-y)dxdy.$$

\begin{lemma}
\label{An-series}
For any $(r,z) \in [0,t] \times \bR$, $\widetilde{f}_n(\cdot,r,z,t,x) \in \cH^{\otimes (n-1)}$. Moreover, the series
$$\sum_{n\geq 1}nI_{n-1}(\widetilde{f}_n(\cdot,r,z,t,x)) \quad \mbox{converges in $L^2(\Omega)$},$$
uniformly in $(r,z) \in [0,t] \times \bR$.
\end{lemma}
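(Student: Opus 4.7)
Since the Wiener chaoses are mutually orthogonal, the $L^2(\Omega)$-norm of the tail of the series satisfies
\[
\E\Bigl|\sum_{n\geq N} n\, I_{n-1}\bigl(\widetilde{f}_n(\cdot, r, z, t, x)\bigr)\Bigr|^2 = \sum_{n\geq N} n^2(n-1)!\,\|\widetilde{f}_n(\cdot,r,z,t,x)\|_{\cH^{\otimes(n-1)}}^2,
\]
so it suffices to bound the general summand by $C\lambda^n$ for some $\lambda<1$, uniformly in $(r,z)\in[0,t]\times\bR$. This will simultaneously give both $\widetilde{f}_n(\cdot,r,z,t,x)\in\cH^{\otimes(n-1)}$ and $L^2(\Omega)$-convergence uniformly in $(r,z)$.

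The approach closely mirrors the proof of Lemma~\ref{sum-alpha-finite}. Introducing $F_n^{(r,z)} := n!\,\widetilde{f}_n(\cdot, r, z, t, x)$, which is symmetric in the $n-1$ pairs $(t_i,x_i)$ and, for distinct time arguments, equals $f_n$ evaluated on the sequence obtained by inserting $(r,z)$ in its sorted position among the $(t_i,x_i)$, one has
\[
n^2(n-1)!\,\|\widetilde{f}_n(\cdot,r,z,t,x)\|_{\cH^{\otimes(n-1)}}^2 = \frac{1}{(n-1)!}\,\|F_n^{(r,z)}\|_{\cH^{\otimes(n-1)}}^2.
\]
By Theorem~\ref{jolis-th} and the Cauchy--Schwarz argument in the time variables used in Lemma~\ref{sum-alpha-finite}, the right-hand side is bounded by
\[
\frac{\Gamma_t^{n-1}}{(n-1)!}\int_{[0,t]^{n-1}}\int_{\bR^{n-1}}|\cF_{\mathbf{x}}F_n^{(r,z)}(\mathbf{t},\boldsymbol{\xi})|^2\prod_i\mu(d\xi_i)\,d\mathbf{t}.
\]
By the symmetry of $F_n^{(r,z)}$ in $\mathbf{t}$, the time integration restricts (up to a factor of $(n-1)!$) to the wedge $\{0<t_1<\cdots<t_{n-1}<t\}$, which further decomposes into $n$ sub-wedges $W_j$ according to the position $j$ of $r$ among the sorted times; on each $W_j$, $F_n^{(r,z)}$ coincides with the function $f_j^{(n)}$ of \eqref{def-f-jn}. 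After computing $\cF_{\mathbf{x}}f_j^{(n)}$ (analogously to \eqref{Fourier-gn}), one applies Lemma~\ref{max-principle} iteratively to the $\mu$-integrations and then handles the $\mathbf{t}$-integration over $W_j$ by the exponential-weighting trick from Lemma~\ref{sum-alpha-finite}. Summing over $j=1,\ldots,n$ should yield
\[
\|F_n^{(r,z)}\|_{\cH^{\otimes(n-1)}}^2 \leq C\,(n-1)!\,n\,e^{Mt}\lambda_M^{n-1}, \qquad \lambda_M=\frac{2\Gamma_t K_M}{M},
\]
uniformly in $(r,z)$. Dividing by $(n-1)!$ and summing in $n$, with $M$ chosen large enough that $\lambda_M<1$, then completes the plan.

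The main obstacle lies in the Fourier computation for $f_j^{(n)}$: since this function carries $n$ Green-function factors but only $n-1$ spatial integration variables (the point $(r,z)$ being fixed rather than integrated), $\cF_{\mathbf{x}}f_j^{(n)}$ is not a plain product of $\widehat{G}$-factors as in \eqref{Fourier-gn} but rather a one-dimensional integral over a residual spectral variable $\eta$ dual to the absent $z$-integration (via the slice formula for the Fourier transform). Estimating $|\cF_{\mathbf{x}}f_j^{(n)}|^2$ integrated against the spectral measure $\mu$ requires combining Lemma~\ref{max-principle} with Plancherel-type bounds such as $\int_{\bR}|\widehat{G}(u,\eta)|^2 d\eta=\pi u$ and $|\widehat{G}(u,\eta)|\leq u$, in a manner that retains sufficient $\xi$-dependence for each surviving factor to integrate finitely against $\mu$ by Dalang's condition \eqref{Dalang-cond} (which is automatic in dimension $d=1$).
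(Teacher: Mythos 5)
Your overall strategy (orthogonality of the chaoses, decomposition according to the position $j$ of $(r,z)$ among the sorted times, Cauchy--Schwarz in the time variables, Lemma \ref{max-principle}, and the exponential-weighting trick) is exactly the skeleton of the paper's argument. You have also correctly diagnosed the crux: in $f_j^{(n)}(\cdot,r,z,t,x)$ the point $(r,z)$ is \emph{fixed}, not integrated, so the chain of Green functions is pinned at $z$ on both sides, and $\cF_{\mathbf{x}}f_j^{(n)}$ is not a plain product of $\widehat{G}$-factors --- there is indeed a residual spectral variable. But you leave precisely this step unexecuted (``should yield'', ``in a manner that retains sufficient $\xi$-dependence''), and the tools you list do not obviously close it: the identity $\int_{\bR}|\widehat{G}(u,\eta)|^2\,d\eta=\pi u$ is only available when there is a genuine $L^2$ pairing in $\eta$ (i.e.\ when one integrates over $z$, as the paper does later in Theorem \ref{th-D-in-L2}); at a fixed $z$ the residual $\eta$-integral is oscillatory, and the crude bound $|\widehat{G}(u,\eta)|\leq u$ applied to the orphan factor leaves a non-integrable $d\eta$ unless you borrow decay from neighbouring factors, which then degrades their integrability against $\mu$. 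So as written the plan has a genuine gap at its central step.

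The paper's resolution is much more elementary and entirely avoids the Fourier transform at the pinned point: in $d=1$ one has $G(t,x)=\frac12 1_{\{|x|<t\}}\leq\frac12$, so the two orphan factors $G(t_j-r,x_j-z)$ and $G(s_j-r,y_j-z)$ appearing in the squared $\cH^{\otimes(n-1)}$-norm are simply bounded by $1/2$ \emph{pointwise in physical space}. This cuts the chain at $(r,z)$ and the remaining integrand factorizes exactly into a piece on $([0,r]\times\bR)^{2(j-1)}$ and a piece on $([r,t]\times\bR)^{2(n-j)}$, each of which is an already-controlled quantity of the type estimated in Lemma \ref{sum-alpha-finite}; this yields $\|(n-1)!\,h_j^{(n)}(\cdot,r,z,t,x)\|_{\cH^{\otimes(n-1)}}^2\leq\frac14\Gamma_t^{n-1}(n-1)!\,e^{Mt}(2/M)^{n-1}K_M^{n-1}$ uniformly in $(r,z)$ and $j$, and the series then converges geometrically for $M$ large. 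This sup-norm bound on $G$ is the step the paper flags as using ``in an essential way the form of the fundamental solution for $d=1$'' (and the reason the lemma fails for $d=2$); it is the one idea missing from your proposal, and once you insert it the rest of your plan goes through verbatim.
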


\begin{proof}
Fix $(r,z) \in [0,t] \times \bR$. Due to \eqref{tilde-fn-sum}, to prove that  $\widetilde{f}_n(\cdot,r,z,t,x) \in \cH^{\otimes (n-1)}$, it is enough to
 show that $f_j^{(n)}(\cdot,r,z,t,x) \in \cH^{\otimes (n-1)}$ for any $j=1,\ldots,n$.
If $j=n$, this is clear since $f_n^{(n)}(\cdot,r,z,t,x)=G(t-r,x-z)f_{n-1}(\cdot,r,z)$. So we let $j \leq n-1$.
Since $f_j^{(n)}(\cdot,r,z,t,x) \geq 0$, and similar to the proof of the fact that $|\cH| \subset \cH$, it is enough to show that the following integral is finite:
\begin{align*}
& \|f_j^{(n)}(\cdot,r,z,t,x)\|_{\cH^{\otimes (n-1)}}^2=\int_{([0,t] \times \bR)^{2(n-1)}}\prod_{i=1}^{n-1}\gamma(t_k-s_k)\prod_{k=1}^{n-1}f(x_k-y_k)\\
& \quad \quad f_j^{(n)}(t_1,x_1,\ldots,t_{n-1},x_{n-1},r,z,t,x)
f_j^{(n)}(s_1,y_1,\ldots,s_{n-1},y_{n-1},r,z,t,x) d{\bf x} d{\bf y} d{\bf t} d{\bf s},
\end{align*}
where here ${\bf x}=(x_1,\dots,x_{n-1})$, ${\bf t}=(t_1,\dots,t_{n-1})$ and similarly for ${\bf y}$ and ${\bf s}$.
The problem in this integral is caused by the terms $G(t_j-r,x_j-z)$ and $G(s_j-r,y_j-z)$ for which there is no corresponding integral $dr dz$.
Due to the form of $G$ in dimension $d=1$, we can bound these terms by $1/2$. The remaining terms can be
separated into two integrals, one on $([0,r] \times \bR)^{2(j-1)}$ and the other on $([r,t] \times \bR)^{2(n-j)}$.
The second integral can be written as an integral on $([0,t-r] \times \bR)^{2(n-j)}$, using a change of variables. It follows that
$$ \|f_j^{(n)}(\cdot,r,z,t,x)\|_{\cH^{\otimes (n-1)}}^2 \leq \frac{1}{4}\|f_j(\cdot,r,z)\|_{\cH^{\otimes (j-1)}}^2 \|f_{n-j}(\cdot,t-r,x) \|_{\cH^{\otimes (n-j)}}^{2}<\infty.$$

Next, we treat the summability of the sum. We denote
\begin{equation}
\label{def-An}A_n(r,z,t,x):=nI_{n-1}(\widetilde{f}_n(\cdot,r,z,t,x))=
\sum_{j=1}^{n}I_{n-1}(h_j^{(n)}(\cdot,r,z,t,x)).
\end{equation}
Using the inequality $(\sum_{j=1}^n a_j)^2 \leq n \sum_{j=1}^n a_j^2$, we have:
\begin{align}
\nonumber
\E|A_n(r,z,t,x)|^2 & \leq n \sum_{j=1}^{n} \E|I_{n-1}(h_j^{(n)}(\cdot,r,z,t,x))|^2\\
& = n \sum_{j=1}^{n}(n-1)! \|h_j^{(n)}(\cdot,r,z,t,x))\|_{\cH^{\otimes (n-1)}}^2 \nonumber \\
\label{bound-An}
&=n \sum_{j=1}^{n}\frac{1}{(n-1)!} \|(n-1)!h_j^{(n)}(\cdot,r,z,t,x))\|_{\cH^{\otimes (n-1)}}^2.
\end{align}
To evaluate $\|(n-1)!h_j^{(n)}(\cdot,r,z,t,x))\|_{\cH^{\otimes (n-1)}}^2$, we proceed as in Lemma \ref{sum-alpha-finite}. For ${\bf t}=(t_1,\ldots,t_{n-1}) \in [0,t]^{n-1}$, we denote
$$g_{{\bf t},j,r,z}^{(n)}(x_1,\ldots,x_{n-1},t,x)=(n-1)!h_j^{(n)}(t_1,x_1,\dots,t_{n-1},x_{n-1},r,z,t,x).$$
Then
\begin{align*}
\|(n-1)!h_j^{(n)}(\cdot,r,z,t,x))\|_{\cH^{\otimes (n-1)}}^2 &=\int_{[0,t]^{2(n-1)}} \prod_{k=1}^{n-1} \gamma(t_k-s_k) \psi_{j,r,z}^{(n)}({\bf t},{\bf s}) d{\bf t} d{\bf s},
\end{align*}
where $\psi_{j,r,z}^{(n)}({\bf t},{\bf s})=\langle g_{{\bf t},j,r,z}^{(n)}(\cdot,t,x), g_{{\bf s},j,r,z}^{(n)}(\cdot,t,x) \rangle_{\cP_0^{\otimes (n-1)}}$. Using Cauchy-Schwarz inequality and Lemma 4.3 in \cite{balan-song17}, it follows that
\begin{align*}
& \|(n-1)!h_j^{(n)}(\cdot,r,z,t,x))\|_{\cH^{\otimes (n-1)}}^2 \leq \Gamma_t^{n-1} \int_{[0,t]^{n-1}} \|g_{{\bf t},j,r,z}^{(n)}(\cdot,t,x)\|_{\cP_0^{\otimes (n-1)}}^2 d{\bf t}\\
& \quad =\Gamma_t^{n-1} (n-1)! \int_{0<t_1<\ldots<t_{j-1}<r<t_j<\ldots<t_{n-1}<t} \int_{\bR^{2(n-1)}} \prod_{i=1}^{n-1}f(x_i-y_i)\\
& \quad \quad \qquad  \times f_n(t_1,x_1,\ldots,t_{j-1},x_{j-1},r,z,t_j,x_j,\ldots,t_{n-1},x_{n-1},t,x)\\
& \quad \quad \qquad \times
f_n(t_1,y_1,\ldots,t_{j-1},y_{j-1},r,z,t_j,y_j,\ldots,t_{n-1},y_{n-1},t,x)d{\bf
x} d{\bf y}d{\bf t}.
\end{align*}
Similarly as above, we bound the terms $G(t_j-r,x_j-z)$ and $G(s_j-r,y_j-z)$ by $1/2$. The remaining
terms can be separated into two integrals, one on $\{t_1<\ldots<t_{j-1}<r\} \times \bR^{2(j-1)}$
which is bounded by $e^{Mr} \left(\frac{2}{M} \right)^{i-1}K_M^{j-1}$, and the other
on $\{r<t_j<\ldots<t_{n-1}<t\} \times \bR^{2(n-j)}$ which is bounded by $e^{M(t-r)} \left(\frac{2}{M} \right)^{n-j}K_M^{n-j}$, for any $M>0$. Hence, for any $j=1, \ldots,n$,
$$\|(n-1)!h_j^{(n)}(\cdot,r,z,t,x))\|_{\cH^{\otimes (n-1)}}^2 \leq \frac{1}{4}\Gamma_t^{n-1}(n-1)!\,e^{Mt} \left(\frac{2}{M} \right)^{n-1}K_M^{n-1}.$$
Coming back to \eqref{bound-An}, we obtain that for any $(r,z) \in [0,t] \in \bR$
$$\E|A_n(r,z,t,x)|^2 \leq \frac{1}{4}  n^2 \Gamma_t^{n-1}  e^{Mt} \left(\frac{2}{M} \right)^{n-1}K_M^{n-1}.$$
By choosing $M=M_t$ large enough, and using the orthogonality of the Wiener chaos spaces, we see that $$\sup_{(r,z) \in [0,t] \times \bR} \E\left|\sum_{k=n}^{m} A_k(r,z,t,x)\right|^2 \to 0 \quad \mbox{as} \quad n,m \to \infty.$$
\end{proof}

\begin{remark} (The case $d=2$)
{\rm The proof of Lemma \ref{An-series} does not work for the case $d=2$.
Unfortunately, we could not find another argument to prove that $f_{j}^{(n)}(\cdot,r,z,t,x) \in \cH^{\otimes (n-1)}$ for $j<n$, when $d=2$. To see where the problem appears, consider  $n=2$ and $j=1$: $$f_1^{(2)}(t_1,x_1,r,z,t,x)=G(t-t_1,x-x_1)G(t_1-r,x_1-z)1_{\{r<t_1<t\}}$$
Even in this case, we could not show that the function $f_{1}^{(2)}(\cdot,r,z,t,x)$ is in $\cH$ when $d=2$.}
\end{remark}

Our next goal is to establish the measurability of the function $(\omega,r,z) \mapsto D_{r,z}u(t,x)(\omega)$, which will be needed for the application of Corollary \ref{new-cor} (in the proof of Theorem \ref{thm:density}).

We recall the following definitions.

\begin{definition}
{\rm A random field $\{X(t,x);t \geq 0,x \in \bR\}$ defined on a probability space $(\Omega,\cF,\bP)$
is {\em measurable} if the map $(\omega,t,x) \mapsto X(\omega,t,x)$ is measurable with respect to
$\cF \times \cB(\bR_{+}) \times \cB(\bR)$.}
\end{definition}

\begin{definition}
{\rm
The random fields $\{X(t,x);t \geq 0, x \in \bR\}$ and $\{Y(t,x);t \geq 0, x \in \bR\}$ are {\em modifications}
of each other if $X(t,x)=Y(t,x)$ a.s. for all $t \geq 0$ and $x \in \bR$.}
\end{definition}

\begin{theorem}
\label{D-measurable-th}
The process $\{D_{r,z}u(t,x);r \in [0,t],z \in \bR\}$ has a measurable modification.
\end{theorem}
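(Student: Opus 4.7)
The plan is to produce a jointly measurable modification by combining the criterion from Appendix B (applied chaos-by-chaos) with the uniform $L^2(\Omega)$-convergence from Lemma \ref{An-series}. Concretely, I would proceed in three steps.

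\textbf{Step 1 (Continuity of each chaos component in $L^2(\Omega)$).} Recall that $A_n(r,z,t,x)=nI_{n-1}(\widetilde{f}_n(\cdot,r,z,t,x))$. By the isometry property of multiple Wiener integrals,
$$\E|A_n(r,z,t,x)-A_n(r',z',t,x)|^2 = n^2(n-1)!\,\|\widetilde{f}_n(\cdot,r,z,t,x)-\widetilde{f}_n(\cdot,r',z',t,x)\|_{\cH^{\otimes(n-1)}}^2.$$
Using \eqref{tilde-fn-sum} and the triangle inequality, it suffices to show that each $h_j^{(n)}(\cdot,r,z,t,x)$ is continuous in $(r,z)$ with values in $\cH^{\otimes(n-1)}$. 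Since $\|\cdot\|_{\cH^{\otimes(n-1)}}\le \|\cdot\|_{|\cH|^{\otimes(n-1)}}$, it is enough to establish continuity in $|\cH|^{\otimes(n-1)}$. Expanding the corresponding integral and using that in dimension $d=1$ the kernel $G$ is bounded by $1/2$ with compact support, the integrand defining the $|\cH|^{\otimes(n-1)}$-norm of the difference is (i) continuous in $(r,z)$ for a.e.\ choice of the integration variables (the indicators in \eqref{def-f-jn} only jump on null sets) and (ii) dominated by the explicit bound obtained in the proof of Lemma \ref{An-series}. Dominated convergence then yields the required $L^2(\Omega)$-continuity of $(r,z)\mapsto A_n(r,z,t,x)$.

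\textbf{Step 2 (Jointly measurable version of each $A_n$).} Applying the criterion from Appendix B to the real-valued, $L^2(\Omega)$-continuous (hence stochastically continuous) process $\{A_n(r,z,t,x); (r,z)\in[0,t]\times\bR\}$, we obtain for each $n\ge 1$ a jointly measurable modification $\widetilde{A}_n(r,z,t,x,\omega)$. The partial sums $S_N(r,z,t,x):=\sum_{n=1}^{N}\widetilde{A}_n(r,z,t,x)$ are then jointly measurable in $(r,z,\omega)$.

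\textbf{Step 3 (Passing to the limit).} By Lemma \ref{An-series}, $S_N(r,z,t,x)\to D_{r,z}u(t,x)$ in $L^2(\Omega)$, uniformly in $(r,z)\in[0,t]\times\bR$. Choosing a subsequence $(N_k)$ with $\sup_{(r,z)}\E|S_{N_k}-D_{r,z}u(t,x)|^2\le 2^{-k}$, a Chebyshev--Borel--Cantelli argument gives, for each fixed $(r,z)$, that $S_{N_k}(r,z,t,x)\to D_{r,z}u(t,x)$ almost surely. Define
$$\widetilde{D}_{r,z}u(t,x)(\omega):=\limsup_{k\to\infty}S_{N_k}(r,z,t,x)(\omega).$$
This is jointly measurable in $(r,z,\omega)$ as a $\limsup$ of jointly measurable functions, and by construction $\widetilde{D}_{r,z}u(t,x)=D_{r,z}u(t,x)$ a.s.\ for every $(r,z)\in[0,t]\times\bR$, proving the desired measurable modification.

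The main obstacle I anticipate is the continuity statement in Step 1: although it is morally a dominated-convergence argument, one has to carefully justify that the jumps of the indicator function $1_{\{0<t_1<\cdots<t_{j-1}<r<t_j<\cdots<t_{n-1}<t\}}$ as $r$ varies only contribute on null sets of the integration variables, and to produce an honest $|\cH|^{\otimes(n-1)}$-integrable dominating function uniformly in $(r,z)$ in a neighborhood of the base point. Once this is done, Steps 2 and 3 are essentially bookkeeping.
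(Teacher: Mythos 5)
There is a genuine gap in Step 1. You claim that each chaos component $A_n(r,z,t,x)=nI_{n-1}(\widetilde{f}_n(\cdot,r,z,t,x))$ is $L^2(\Omega)$-continuous in $(r,z)$, and you justify the problematic indicator jumps by saying they ``only contribute on null sets'' of the integration variables. This is true for the terms $h_j^{(n)}$ with $j\leq n-1$, where $r$ enters only through comparisons with integrated time variables. But it fails for the term $j=n$: there $f_n^{(n)}(\cdot,r,z,t,x)=G(t-r,x-z)\,f_{n-1}(\cdot,r,z)$, and the factor $G(t-r,x-z)=\tfrac12 1_{\{|x-z|<t-r\}}$ is evaluated at the \emph{external} point $(t,x)$ and the \emph{parameter} $(r,z)$ --- it is not integrated out, so its discontinuity across the light-cone boundary $|x-z|=t-r$ is a genuine discontinuity of $(r,z)\mapsto f_n^{(n)}(\cdot,r,z,t,x)$ in $\cH^{\otimes(n-1)}$. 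The failure is already visible at $n=1$: $A_1(r,z,t,x)=G(t-r,x-z)$ is a deterministic indicator, and $\E|A_1(r,z)-A_1(r',z')|^2$ equals $1/4$ for pairs of points straddling the cone boundary, however close they are. So the criterion of Appendix B (which requires stochastic continuity) cannot be applied to $A_n$ as a whole, and your Step 2 does not go through as written.

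The paper's proof repairs exactly this: it splits $D_{r,z}u(t,x)=T(r,z)+T'(r,z)$, where $T$ collects the terms with $j\leq n-1$ and $T'(r,z)=\sum_{n\geq1}A_n^{(n)}(r,z)=G(t-r,x-z)F(r,z)$ with $F(r,z)=\sum_{n\geq0}(n+1)J_n(r,z)$. For $T$ one proves $L^2(\Omega)$-continuity (essentially your dominated-convergence argument, which is sound for those terms) and applies Proposition \ref{exist-meas}.a); for $T'$ one only needs a measurable modification of $F$ (obtained from its $L^p(\Omega)$-continuity), since multiplying by the deterministic measurable function $(r,z)\mapsto G(t-r,x-z)$ preserves joint measurability. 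Your Steps 2 and 3 (chaos-by-chaos modifications, uniform $L^2$ convergence, Borel--Cantelli $\limsup$) are a perfectly workable alternative to applying the modification criterion to the limit directly, but only after you perform this separation of the $j=n$ terms; without it, Step 1 is false.
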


\begin{proof}
We use definition \eqref{series-D} of $D_{r,z}u(t,x)$, and relation \eqref{def-An} in which we separate the term corresponding to $j=n$. More precisely, we write:
\begin{equation}
\label{decompose-D}
D_{r,z}u(t,x)=\sum_{n\geq 1}A_n^*(r,z)+\sum_{n\geq 1}A_n^{(n)}(r,z)=:T(r,z)+T'(r,z),
\end{equation}
with $A_n^*(r,z)=\sum_{j=1}^{n-1}A_j^{(n-1)}(r,z)$ and $A_j^{(n)}(r,z)=I_{n-1}(f_j^{(n)}(\cdot,r,z,t,x))$ for $j=1,\ldots,n$.
We treat separately the two terms on the right-hand side of \eqref{decompose-D}.

{\em Step 1. (The first term)} We will show that the random field $T=\{T(r,z);r \in [0,t],z \in \bR\}$ has a measurable modification. By Proposition \ref{exist-meas}.a) (Appendix \ref{appendix-meas}), it suffices to show that $T$ is $L^2(\Omega)$-continuous. Let $T_n(r,z)=\sum_{k=1}^n A_k^*(r,z)$. The same argument as in the proof of Lemma \ref{An-series} shows that $T_n(r.z)$ converges to $T(r,z)$ in $L^2(\Omega)$, uniformly in $(r,z) \in [0,t] \times \bR^d$. To prove that $T$ is $L^2(\Omega)$-continuous, it suffices to show that
$T_n$ is $L^2(\Omega)$-continuous for any $n \geq 1$. For this, it is enough to prove that  $A_n^*$ is $L^2(\Omega)$-continuous for any $n \geq 1$.

We prove that $A_j^{(n)}$ is $L^2(\Omega)$-continuous, for any $n \geq 1$ and $j=1,\ldots,n-1$.


We fix $j=1,\ldots n-1$. We will prove two things:
\begin{itemize}
 \item[(i)] as $h \to 0$,
$$\E|A_j^{(n)}(r+h,z)-A_j^{(n)}(r,z)|^2=(n-1)! \|h_j^{(n)}(\cdot,r+h,z)-h_j^{(n)}(\cdot,r,z)\|_{\cH^{\otimes (n-1)}} \to 0,$$
uniformly in $z \in \bR$. \item [(ii)] as $|k| \to 0$,
$$\E|A_j^{(n)}(r,z+k)-A_j^{(n)}(r,z)|^2=(n-1)! \|h_j^{(n)}(\cdot,r,z+k)-h_j^{(n)}(\cdot,r,z)\|_{\cH^{\otimes (n-1)}} \to 0,$$
for every $r \in [0,t]$ fixed.
\end{itemize}

We sketch the proof of (i), the continuity in time. The proof of
(ii) can be performed in a similar way. As in the proof of Lemma
\ref{An-series}, this involves a difference of the form
$$f_n(t_1,x_1,\ldots,t_{j-1},x_{j-1},r+h,z,t_j,x_j,\ldots,t_{n-1},x_{n-1},t,x)-$$
$$f_n(t_1,x_1,\ldots,t_{j-1},x_{j-1},r,z,t_j,x_j,\ldots,t_{n-1},x_{n-1},t,x),$$
which is equal to
\begin{align*}
& G(t-t_{n-1},x-x_{n-1}) \cdots
[G(t_j-r-h,x_j-z)G(r+h-t_{j-1},z-x_{j-1})\\
& \qquad \qquad -G(t_j-r,x_j-z)G(r-t_{j-1},z-x_{j-1})] \cdots
G(t_2-t_1,x_2-x_1).
\end{align*}
We examine the difference $[...]$ above.
By inserting plus and minus the mixed term $G(t_j-r-h,x_j-z)G(r-t_{j-1},z-x_{j-1})$, the modulus of this difference is smaller than
\begin{align*}
&G(t_j-r-h,x_j-z)|G(r+h-t_{j-1},z-x_{j-1})-G(r-t_{j-1},z-x_{j-1})|\\
& \qquad
+|G(t_j-r-h,x_j-z)-G(t_{j}-r,x_{j}-z)|G(r-t_{j-1},z-x_{j-1})=:A+B.
\end{align*}
When we take the norm in $\cH^{\otimes (n-1)}$ of the term involving
$A$, we proceed as follows, First, we bound the term
$G(t_j-r-h,x_j-z)$ by $\frac12$ (indeed, we bound two terms of this
kind, because we have a norm in $|\cP_0|$). The remaining expression
splits into a product of two integrals, one of which on
$\{r<t_j<\ldots<t_{n-1}<t\} \times \bR^{2(n-j)}$ and is bounded by
$e^{M(t-r)} \left(\frac{2}{M} \right)^{n-j}K_M^{n-j}$, for any
$M>0$. Regarding the other term, using the arguments in the proof of
Lemma \ref{sum-alpha-finite} it can be bounded, up to some constant,
by
\begin{align}
& \int_0^t \int_{\bR^2} f(x_j-x'_j) |G(r+h-t_{j-1},z-x_{j-1})-
G(r-t_{j-1},z-x_{j-1})| \nonumber \\
& \qquad \qquad \quad \times |G(r+h-t_{j-1},z-x'_{j-1})-
G(r-t_{j-1},z-x'_{j-1})| dx_{j-1} dx'_{j-1} dt_{j-1}\nonumber \\
& = \int_0^t \int_{\bR^2} f(x_j-x'_j) |G(r+h-t_{j-1},x_{j-1})-
G(r-t_{j-1},x_{j-1})| \nonumber \\
& \qquad \qquad \quad \times |G(r+h-t_{j-1},x'_{j-1})-
G(r-t_{j-1},x'_{j-1})| dx_{j-1} dx'_{j-1} dt_{j-1}, \label{eq:156}
\end{align}
where we have applied a change of variables in order to get rid of
$z$. Finally, in \eqref{eq:156} we apply the bounded convergence
theorem thanks to the continuity of the map $t \mapsto G(t,x)$ for
fixed $x$ and taking into account that, assuming $0<h<1$,
\begin{align*}
|G(r+h-t_{j-1},x_{j-1})- G(r-t_{j-1},x_{j-1})| & \leq
G(r+h-t_{j-1},x_{j-1}) \leq G(T,x_{j-1}),
\end{align*}
for some $T>0$.

We now deal with the norm in $\cH^{\otimes (n-1)}$ of the term
involving $B$. In this case, this norm can be directly written as a
product of two terms, one of which is an integral on
$\{t_1<\ldots<t_{j-1}<r\} \times \bR^{2(j-1)}$ that is bounded by
$e^{Mr} \left(\frac{2}{M} \right)^{i-1}K_M^{j-1}$, for all $M>0$.
The other term is given by
\begin{align*}
&\Gamma_t^{n-j} \int_{[0,t]^{n-j}} \int_{(\bR^2)^{n-j}}
f(x_j-x_j')\cdots f(x_{n-1}-x_{n-1}')
|G(t_j-r-h,x_j-z)-G(t_j-r,x_j-z)| \\
& \; \times G(t_{j+1}-t_j,x_{j+1}-x_j) \cdots G(t-t_{n-1},x-x_{n-1})
 |G(t_j-r-h,x'_j-z)-G(t_j-r,x'_j-z)| \\
& \; \times G(t_{j+1}-t_j,x'_{j+1}-x'_j) \cdots
G(t-t_{n-1},x-x'_{n-1})\, dx_j \cdots dx_{n-1} dx'_j \cdots
dx'_{n-1} dt_j\cdots dt_{n-1}.
\end{align*}
In order to get rid of the dependence on $z$, we make a series of
changes of variable and move $z$ from the terms involving $x_j$ and
$x'_j$ to those involving $x_{n-1}$ and $x'_{n-1}$. Another change
of variable allows us eventually remove the variable $z$. In the
remaining term, one can easily apply the bounded convergence
theorem, because the differences in absolute value can be bounded by
a constant. This would let us conclude the proof of item (i).



{\em Step 2. (The second term)} Note that
$T'(r,z)=\sum_{n\geq 1}A_n^{(n)}(r,z)=G(t-r,x-z)F(r,z)$, where $$F(r,z)=\sum_{n\geq 1}n I_{n-1}(f_{n-1}(\cdot,r,z))=\sum_{n\geq 0}(n+1)J_n(r,z),$$
and $J_n(r,z)=I_n(f_n(\cdot,r,z))$.
Since the map $(r,z) \mapsto G(t-r,x-z)$ is measurable on $[0,t] \times \bR$, it suffices to show that $F$ has a measurable modification. This will follow by Proposition \ref{exist-meas}.a) (Appendix \ref{appendix-meas}), once we show that $F$ is $L^p(\Omega)$-continuous, for any $p\geq 2$. For this, we use the same argument as in the proof of Theorem 7.1 of \cite{balan-song17}. We denote by $\|\cdot\|_p$ the $L^p(\Omega)$-norm.

Using the equivalence of norms $\|\cdot\|_p$ for random variables in the same Wiener chaos space (see last line of p.62 of \cite{nualart06}) and \eqref{alpha-bound}, for any $r \in [0,t]$ and $z \in \bR^d$, we have:
\begin{align*}
\|J_n(r,z)\|_p & \leq (p-1)^{n/2} \|J_n(r,z)\|_2= (p-1)^{n/2} \left( \frac{1}{n!}\alpha_n(r)\right)^{1/2}\\
& \leq  (p-1)^{n/2} e^{Mr/2} \left( \frac{2\Gamma_r}{M}K_M\right)^{n/2}.
\end{align*}
Using the fact that $n+1 \leq e^n$ and choosing $M$ sufficiently large, it follows that
$$\sum_{n \geq 0} (n+1) \sup_{(r,z) \in [0,t] \times \bR^d} \|J_n(r,z)\|_p \leq e^{Mt/2}  \sum_{n\geq 0} e^n (p-1)^{n/2} \left( \frac{2\Gamma_t}{M}K_M\right)^{n/2}
<\infty.$$
Hence, the sequence $\{F_n(r,z)=\sum_{k=0}^{n}(k+1)J_k(r,z);n \geq 1\}$ converges to $F(r,z)$ in $L^p(\Omega)$, uniformly in $(r,z) \in [0,t] \times \bR^d$. Note that $F_n$ is $L^p(\Omega)$-continuous for any $n$,
since $J_n$ is $L^p(\Omega)$-continuous for any $n$ (by Lemma 7.1 of \cite{balan-song17}). Therefore, $F$ is
$L^p(\Omega)$-continuous.

\end{proof}

\begin{remark}
\label{D-measurable-rem}
{\rm For the remaining part of the article, we fix $t>0$ and $x \in \bR$, and we work with the measurable modification given by Theorem \ref{D-measurable-th}, which will be denoted also by $\{D_{r,z}u(t,x);r \in [0,t],z \in \bR\}$.}
\end{remark}

We aim to prove that for any fixed $r\in [0,t]$, the Malliavin
derivative $D_{r,\cdot}u(t,x)$ satisfies an equation in $L^2(\Omega;L^2(\bR))$. For this, we first need to prove that $D_{r,\cdot}u(t,x)$ belongs to $L^2(\bR)$ a.s. This fact is established by the following result.

\begin{theorem}
\label{th-D-in-L2}
For any $T>0$,
\[
\sup_{(t,x)\in [0,T]\times \bR} \, \sup_{r\in [0,t]}\E \int_\bR
|D_{r,z} u(t,x)|^2 dz  \leq C_T,
\]
where $C_T>0$ is a constant depending on $T$ (which may not be increasing in $T$).
\end{theorem}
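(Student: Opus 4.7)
The plan is to exploit the finite propagation speed of the $1$-dimensional wave equation together with the uniform pointwise $L^2$-estimate for $D_{r,z}u(t,x)$ already established inside the proof of Lemma \ref{An-series}.

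\emph{Support property.} First I would show that $D_{r,z}u(t,x)=0$ a.s.\ whenever $|x-z|>t-r$. By the chaos expansion \eqref{series-D}, it suffices to verify that for each $n\ge 1$ the symmetric kernel $\widetilde f_n(\cdot,r,z,t,x)$ vanishes identically as an element of $\cH^{\otimes(n-1)}$ on the region $\{|x-z|>t-r\}$. Since $G(t,x)=\tfrac12 \mathbf 1_{\{|x|<t\}}$, any non-zero value of $f_n(t_1,x_1,\ldots,t_n,x_n,t,x)$ enforces $|x-x_n|<t-t_n$ and $|x_j-x_{j-1}|<t_j-t_{j-1}$ for each $j$; telescoping gives $|x-x_j|<t-t_j$ for every $j$. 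Setting $(t_j,x_j)=(r,z)$ therefore forces $|x-z|<t-r$. Hence $f_n$, and consequently $\widetilde f_n(\cdot,r,z,t,x)$, is identically zero when $|x-z|>t-r$, so that $A_n(r,z,t,x)=n I_{n-1}(\widetilde f_n(\cdot,r,z,t,x))=0$ a.s. Summing in $n$ (using that the series converges in $L^2(\Omega)$ by Lemma \ref{An-series}) yields $D_{r,z}u(t,x)=0$ a.s.\ on $\{|x-z|>t-r\}$.

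\emph{Uniform pointwise bound.} Second, the estimate obtained inside the proof of Lemma \ref{An-series} reads
\[
\E|A_n(r,z,t,x)|^2 \le \tfrac{n^2}{4}\, e^{Mt} \Big(\tfrac{2\Gamma_t K_M}{M}\Big)^{n-1},
\]
uniformly in $(r,z)\in[0,t]\times\bR$; the right-hand side is $x$-independent (translation invariance of $G$ and of the noise) and monotone in $t$ (since $\Gamma_t$ and $e^{Mt}$ are). Choosing $M=M_T$ so large that $c:=2\Gamma_T K_M/M<1$, the series $\sum_n \tfrac{n^2}{4} e^{MT} c^{n-1}$ converges, and by orthogonality of the Wiener chaoses one obtains a constant $C_T<\infty$ with
\[
\sup_{(t,x)\in[0,T]\times\bR}\;\sup_{r\in[0,t]}\;\sup_{z\in\bR}\E|D_{r,z}u(t,x)|^2 \le C_T.
\]

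\emph{Conclusion.} Combining these two ingredients via Fubini,
\[
\int_\bR \E|D_{r,z}u(t,x)|^2\,dz = \int_{\{|x-z|<t-r\}} \E|D_{r,z}u(t,x)|^2\,dz \le 2(t-r)\,C_T \le 2T\,C_T,
\]
uniformly in $(t,x)\in[0,T]\times\bR$ and $r\in[0,t]$, which is the desired estimate. The bulk of the technical work has already been carried out in Lemma \ref{An-series}; the only genuinely new ingredient is the support property, which is immediate from the explicit form of $G$ in dimension $d=1$. Accordingly, I do not foresee any serious obstacle.
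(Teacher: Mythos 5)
Your proof is correct, and it takes a genuinely different route from the one in the paper. The paper integrates the chaos expansion in $z$ first: after Fubini, the inner $dz$-integral is recognized as a norm in $|\cP_0|^{\otimes (j-1)}\otimes L^2(\bR)\otimes |\cP_0|^{\otimes (n-j)}$ — i.e.\ one spatial covariance $f$ gets replaced by a Dirac delta whose spectral measure is Lebesgue — and the corresponding factor is controlled by Plancherel, $\int_\bR |\cF G(t_j-r,\cdot)(\xi)|^2\,d\xi=\pi(t_j-r)\leq \pi t$, while the remaining factors are handled by Lemma \ref{max-principle} and the exponential-weight trick of Lemma \ref{sum-alpha-finite}. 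You instead observe that the $dz$-integral is really over an interval of length $2(t-r)$, because the telescoped support constraints coming from $G(t,x)=\frac12 1_{\{|x|<t\}}$ force $\widetilde f_n(\cdot,r,z,t,x)\equiv 0$ whenever $|x-z|>t-r$, and then you multiply the length of that interval by the uniform pointwise bound $\sup_{(r,z)}\E|A_n(r,z,t,x)|^2\leq \frac{n^2}{4}\Gamma_t^{n-1}e^{Mt}(2K_M/M)^{n-1}$ already extracted in the proof of Lemma \ref{An-series} (summable over $n$ by orthogonality after choosing $M=M_T$ large). Both ingredients are sound: the support property is immediate from the explicit kernel in $d=1$, and the pointwise bound is uniform in $z$ and $x$ and monotone in $t$ as you claim. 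Your argument is shorter and more elementary; what the paper's computation buys is a template that is reused essentially verbatim for the second-order derivative in Theorem \ref{th-D2-L2} (where two covariances become white) and in Remark \ref{old-method-P0}, though your finite-propagation-speed argument would adapt there as well. One small point worth making explicit: to pass from $\E\int_\bR$ to $\int_\bR\E$ you should invoke the joint measurability of $(\omega,z)\mapsto D_{r,z}u(t,x)$ (Theorem \ref{D-measurable-th}) and Tonelli, exactly as the paper does implicitly.
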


\begin{proof}
Using the orthogonality of Wiener chaos spaces, we have
\begin{align}
\E \int_\bR |D_{r,z} u(t,x)|^2 dz & = \int_\bR \E \Big|\sum_{n\geq
1}nI_{n-1}(\widetilde{f}_n(\cdot,r,z,t,x))\Big|^2 dz \nonumber \\
& = \sum_{n\geq 1} \int_\bR \E
\big|nI_{n-1}(\widetilde{f}_n(\cdot,r,z,t,x))\big|^2 dz.
\label{eq:99}
\end{align}
The proof of Lemma \ref{An-series} shows that
$f_j^{(n)}(\cdot,r,z,t,x)\in \cH^{\otimes (n-1)}$ and
\begin{align*}
& \E \big|nI_{n-1}(\widetilde{f}_n(\cdot,r,z,t,x))\big|^2 \\
 & \; \leq
n \sum_{j=1}^{n}\frac{1}{(n-1)!} \,
\|(n-1)!h_j^{(n)}(\cdot,r,z,t,x))\|_{\cH^{\otimes (n-1)}}^2 \\
 & \; \leq n \sum_{j=1}^{n}\frac{1}{(n-1)!} \,
 \Gamma_t^{n-1} (n-1)! \int_{0<t_1<\ldots<t_{j-1}<r<t_j<\ldots<t_{n-1}<t} \int_{\bR^{2(n-1)}} \prod_{i=1}^{n-1}f(x_i-y_i)\\
& \qquad \qquad \times f_n(t_1,x_1,\ldots,t_{j-1},x_{j-1},r,z,t_j,x_j,\ldots,t_{n-1},x_{n-1},t,x)\\
& \qquad \qquad \times
f_n(t_1,y_1,\ldots,t_{j-1},y_{j-1},r,z,t_j,y_j,\ldots,t_{n-1},y_{n-1},t,x)d{\bf
x} d{\bf y}d{\bf t}.
\end{align*}
Taking the integral $dz$ and applying Fubini's theorem, we have
\begin{align}
& \int_{\bR}\E |nI_{n-1}(\widetilde{f}_n(\cdot,r,z,t,x))|^2 dz \leq n \Gamma_t^{n-1} \sum_{j=1}^n
\int_{0<t_1<\ldots<t_{j-1}<r<t_j<\ldots<t_{n-1}<t} \nonumber \\
& \; \qquad  \times \left\{ \int_\bR \int_{\bR^{2(n-1)}}
\prod_{i=1}^{n-1}f(x_i-y_i)
 f_n(t_1,x_1,\ldots,t_{j-1},x_{j-1},r,z,t_j,x_j,\ldots,t_{n-1},x_{n-1},t,x)\right. \nonumber \\
& \; \qquad  \times
f_n(t_1,y_1,\ldots,t_{j-1},y_{j-1},r,z,t_j,y_j,\ldots,t_{n-1},y_{n-1},t,x)d{\bf
x} d{\bf y} dz \Bigg\} d{\bf t}. \label{eq:98}
\end{align}
Observe that the expression inside $\{...\}$ above has the same
structure as $\psi_n({\bf t},{\bf t})$ in the proof of Lemma \ref{sum-alpha-finite}, except that one of the covariances $f$ is replaced by a
Dirac delta (that is a white covariance), whose spectral measure is
the Lebesgue measure on $\bR$. Equivalently, expression inside
$\{...\}$ can be seen as a norm in $|\cP_0|^{\otimes (j-1)}\otimes
L^2(\bR)\otimes |\cP_0|^{\otimes (n-j)}$ (since everything is
non-negative). Hence, using inequality (4.16) of
\cite{balan-song17}, we obtain that $\{...\}$ in \eqref{eq:98} is
bounded by
\begin{align*}
& \left(\sup_{\eta\in \bR} \int_\bR |\cF G(t_2-t_1)(\xi_1+\eta)|^2
\mu(d\xi_1)\right)\times \cdots \times \left(\sup_{\eta\in \bR}
\int_\bR |\cF G(r-t_{j-1})(\xi_{j-1}+\eta)|^2 \mu(d\xi_{j-1})\right)
\\
&\; \times \left(\sup_{\eta\in \bR} \int_\bR |\cF
G(t_j-r)(\xi+\eta)|^2 d\xi \right) \times \left(\sup_{\eta\in \bR}
\int_\bR |\cF G(t_{j+1}-t_j)(\xi_j+\eta)|^2
\mu(d\xi_j)\right) \\
&\; \times \cdots \times \left(\sup_{\eta\in \bR} \int_\bR |\cF
G(t-t_{n-1})(\xi_{n-1}+\eta)|^2 \mu(d\xi_{n-1})\right).
\end{align*}
Moreover, applying a change of variable and Plancherel's formula, it
holds that
\begin{align}
\nonumber
\int_\bR |\cF G(t_j-r,\cdot)(\xi+\eta)|^2 d\xi & = \int_\bR |\cF
G(t_j-r,\cdot)(\xi)|^2 d\xi= 2\pi \int_\bR
|G(t_j-r,x)|^2 dx\\
\label{bound-L2-F}
& = \frac{\pi}{2} \int_\bR 1_{\{|x|<t_j - r\}} dx = \pi (t_j-r)\leq \pi t.
\end{align}
Therefore, using Lemma \ref{max-principle} we have proved that:
\begin{equation}
\label{I-n-1-L2-estimate}
\int_{\bR}\E |nI_{n-1}(\widetilde{f}_n(\cdot,r,z,t,x))|^2 dz   \leq \pi t  n
\Gamma_t^{n-1} \sum_{j=1}^n H_j^{(n)},
\end{equation}
where
\begin{align}
\nonumber
H_j^{(n)}&:=
\int_{0<t_1<\ldots<t_{j-1}<r<t_j<\ldots<t_{n-1}<t} \int_{\bR^{n-1}}\frac{\sin^2((t_2-t_1)|\xi_1|)}{|\xi_1|^2} \ldots
\frac{\sin^2((r-t_{j-1})|\xi_{j-1}|)}{|\xi_{j-1}|^2} \\
\label{def-Hjn}
& \quad \frac{\sin^2((t_{j+1}-t_{j})|\xi_{j}|)}{|\xi_{j}|^2}  \ldots\frac{\sin^2((t-t_{n-1})|\xi_{n-1}|)}{|\xi_{n-1}|^2} \mu(d\xi_1)\ldots \mu(d\xi_n) dt_1 \ldots dt_{n-1}.
\end{align}

Next, we make the change of variables $$u_1=t_2-t_1,\ldots,u_{j-1}=r-t_j,u_j=t_{j+1}-t_j,\ldots,u_{n-1}=t-t_{n-1}.$$ Note that
$\sum_{k=1}^{j-1}u_k=r-t_1$, $\sum_{k=j}^{n-1} u_k=t-t_j$, and hence $\sum_{k=1}^{n-1}u_k<2t$. In the previous integral, after we change the variables, we insert the terms $e^{-Mu_k}$ for $k=1,\ldots,n-1$, and we use the fact that $e^{M(u_1+\ldots+u_{n-1})}\leq e^{2Mt}$. Then,
\begin{align}
\nonumber
H_j^{(n)} &  \leq
\int_{[0,t]^{n-1}} 1_{\{\sum_{k=1}^{n-1}u_k<2t\}}  e^{M(\sum_{k=1}^{n-1}u_k)}
\int_{\bR^{n-1}} e^{-Mu_1}\frac{\sin^2
(u_1|\xi_1|)}{|\xi_1|^2}  \cdots e^{-Mu_{j-1}} \frac{\sin^2
(u_{j-1}|\xi_{j-1}|)}{|\xi_{j-1}|^2} \\
\nonumber
& \quad e^{-Mu_j} \frac{\sin^2 (u_j|\xi_{j}|)}{|\xi_{j}|^2}
\cdots e^{-Mu_{n-1}}\frac{\sin^2 (u_{n-1}|\xi_{n-1}|)}{|\xi_{n-1}|^2}
 \mu(d\xi_1)\cdots \mu(d\xi_{n-1}) du_1 \cdots
du_{n-1}\\
\nonumber
& \leq  e^{2Mt} \left(\frac2M\right)^{n-1} \int_{\bR^{n-1}}  \prod_{k=1}^{n-1} \frac{1}{M^2+4|\xi_k|^2}
\mu(d\xi_1)\cdots \mu(d\xi_{n-1})\\
\nonumber
& =  e^{2Mt} \left(\frac2M\right)^{n-1} \left(\int_{\bR} \frac{1}{M^2+4|\xi|^2} \mu(d\xi)\right)^{n-1}\\
\label{estimate-Hjn}
& = e^{2Mt} \left(\frac2M\right)^{n-1} K_M^{n-1},
\end{align}
where we have applied the same arguments used in the proof of
Lemma \ref{sum-alpha-finite} (and the same notations). Note that the above estimate provides an upper bound for $H_j^{(n)}$ which does not depend on $j$.

Using \eqref{I-n-1-L2-estimate}, it follows that
\begin{equation}
\label{2-moment-I}
\int_{\bR}\E|n I_{n-1}(\widetilde{f}_{n}(\cdot,r,z,t,x))|^2 dz\leq \pi t n^2 \Gamma_t^{n-1} e^{2Mt}  \left(\frac2M\right)^{n-1} K_M^{n-1}.
\end{equation}

We now return to \eqref{eq:99}. We use the fact that $n \leq e^n$ for all $n \geq 1$.
We conclude that for any $M>0$, $t \in [0,T]$, $x \in \bR$ and $r \in [0,t]$,
$$\E\int_{\bR}|D_{r,z}u(t,x)|^2 dz \leq \pi T e^{2MT}e^2 \sum_{n\geq 1}\left(e^2 \Gamma_T \frac{2}{M} K_M \right)^{n-1}.$$
Choose $M=M_T>2$ large enough such that
$e^2 \Gamma_T K_{M_T} < 1/2$. Then
\begin{equation}
\label{L2-D-norm}
\E\int_{\bR}|D_{r,\cdot}u(t,x)|^2dz \leq \pi T e^{2M_T T}e^2 \sum_{n\geq 1}\left(\frac{1}{2} \right)^{n-1}.
\end{equation}
The conclusion follows with $C_T=2\pi T e^{2M_T T}e^2.$
\end{proof}

\begin{remark}
\label{old-method-P0}
{\rm Fix $0<r<t$ and $x \in \bR$.
Theorem \ref{th-D-in-L2} shows that with probability $1$, the functions $z \mapsto I_{n-1}(\widetilde{f}_{n}(\cdot,r,z,t,x))$ and $z \mapsto D_{r,z}u(t,x)$ belong to $L^2(\bR)$. We denote by $\cF_{z} [I_{n-1}(\widetilde{f}_{n}(\cdot,r,\cdot,t,x))]$ and $\cF_{z}[D_{r,\cdot}u(t,x)]$ the Fourier transforms of these functions. Similarly to the proof of relation \eqref{chaos-D2} below, it can be proved that for almost all $\xi \in \bR$,
$$\cF_z \big[D_{r,\cdot} u(t,x)\big](\xi)=\sum_{n\geq 1}n \cF_{z} \Big[I_{n-1}(\widetilde{f}_{n}(\cdot,r,\cdot,t,x))\Big](\xi)=\sum_{n\geq 1}n I_{n-1}\big(\cF_{z} \widetilde{f}_{n}(\cdot,r,\cdot,t,x)(\xi)\big),$$
which gives the chaos expansion of the Fourier transform of $D_{r,\cdot}u(t,x)$. Hence,
\begin{align*}
\E\|D_{r,\cdot}u(t,x)\|_{0}^2 &= \E \int_{\bR} |\cF_z [D_{r,\cdot}u(t,x)](\xi)|^2 \mu(d\xi) \\ & =\sum_{n \geq 1}n^2 (n-1)! \int_{\bR}\|\cF_{z} \widetilde{f}_{n}(\cdot,r,\cdot,t,x)(\xi)\|_{\cH^{\otimes (n-1)}}^2 \mu(d\xi)\\
& = \sum_{n \geq 1}n^2 (n-1)! \, \|\widetilde{f}_n(\cdot,r,\cdot,t,x) \|_{\cH^{\otimes (n-1)} \otimes \cP_0}^2,
\end{align*}
where for the last equality we expressed the norm in $\cH^{\otimes (n-1)}$ using the Fourier transform with respect to the variables $(t_1,x_1),\ldots,(t_{n-1},x_{n-1})$.
Similarly to the proof of Theorem \ref{H-L2-norm} below, it can be shown that
$\E\|D_{r,\cdot}u(t,x)\|_0^2<\infty$, and hence $D_{r,\cdot}u(t,x) \in \cP_0$ a.s. Moreover,
$$\sup_{(t,x) \in [0,T] \times \bR} \sup_{r \in [0,t]}\E\|D_{r,\cdot}u(t,x)\|_0^2 <\infty.$$
These facts will not be used in the present article.
}
\end{remark}

\section{The second order Malliavin derivative}
\label{section-D2}

In this section, we study the second order Malliavin derivative of the solution.

Note that
\begin{align}
\nonumber
D^2_{(\theta,w),(r,z)}u(t,x)&=D_{\theta,w}\big(D_{r,z}u(t,x)\big)=D_{\theta,w}\Big(\sum_{n\geq 1}n I_{n-1}(\widetilde{f}_n(\cdot,r,z,t,x)) \Big)\\
\label{def-D2}
&=\sum_{n\geq 2} n(n-1)I_{n-2}(\widetilde{f}_n(\cdot,\theta,w,r,z,t,x)).
\end{align}

We will show below that $\widetilde{f}_n(\cdot,\theta,w,r,z,t,x) \in \cH^{\otimes (n-2)}$ and the series above converges in $L^2(\Omega)$. First, note that for any $j=1,\ldots,n$
\begin{equation}
\label{hjn-sum}
h_{j}^{(n)}(\cdot,\theta,w,r,z,t,x)=\frac{1}{n-1}\sum_{i=1,i\not=j}^{n}
h_{ij}^{(n)}(\cdot,\theta,w,r,z,t,x),
\end{equation}
where
\begin{align*}
&h_{ij}^{(n)}(t_1,x_1,\ldots,t_{n-2},x_{n-2},\theta,w,r,z,t,x)=\\
& \quad \frac{1}{(n-2)!}\sum_{\rho \in S_{n-2}} f_n(t_{\rho(1)},x_{\rho(1)},\ldots,t_{\rho(i-1)},x_{\rho(i-1)},\theta,w,t_{\rho(i)},x_{\rho(i)},
\ldots,
t_{\rho(j-1)},x_{\rho(j-1)},r,z,\\
& \qquad \qquad \qquad \qquad \quad t_{\rho(j)},x_{\rho(j)},\ldots,t_{\rho(n-2)},x_{\rho(n-2)},t,x).
\end{align*}

In the definition of $h_{ij}^{(n)}(\cdot,\theta,w,r,z,t,x)$, $(\theta,w)$ is on position $i$ and $(r,z)$ is on position $j$, as arguments of the function $f_n(\cdot,t,x)$. Since the function $f_n(\cdot,t,x)$ contains the indicator of the set $\{0<t_1<\ldots<t_n<t\}$,
this means that if $\theta<r$, then $h_{ij}^{(n)}(\cdot,\theta,w,r,z,t,x)=0$ for all $i>j$; on the other hand, if $\theta>r$, then $h_{ij}^{(n)}(\cdot,\theta,w,r,z,t,x)=0$ for all $i<j$.

For $\theta<r$ and $i<j$, $h_{ij}^{(n)}(\cdot,\theta,w,r,z,t,x)$ is the symmetrization of the function $f_{ij}^{(n)}(\cdot,\theta,w,r,z,t,x)$ defined by:
\begin{align*}
& f_{ij}^{(n)}(t_1,x_1,\ldots,t_{n-2},x_{n-2},\theta,w,r,z,t,x)\\
&\quad = f_n(t_1,x_1,\ldots,t_{i-1},x_{i-1},\theta,w,t_i,x_i,\ldots,t_{j-1},x_{j-1},r,z,t_j,x_j,\ldots,
t_{n-2},x_{n-2},t,x)\\
&\quad = G(t-t_{n-2},x-x_{n-2}) \cdots G(t_i-\theta,x_i-w)G(\theta-t_{i-1},w-x_{i-1}) \cdots G(t_j-r,x_j-z) \\
& \quad \quad G(r-x_{j-1},z-x_{j-1}) \cdots G(t_2-t_1,x_2-x_1)
1_{\{0<t_1<\ldots<t_{i-1}<\theta<t_i<\ldots<t_{j-1}<r<t_j<\ldots<t_{n-2}<t\}}.
\end{align*}
If $r<\theta$ and $j<i$, the function $f_{ij}^{(n)}(\cdot,\theta,w,r,z,t,x)$ is defined similarly and contains the indicator of the set $\{0<t_1<\ldots<t_{j-1}<r<t_j<\ldots<t_{i-1}<\theta<t_i<\ldots<t_{n-2}<t\}$.


By \eqref{tilde-fn-sum} and \eqref{hjn-sum},
\begin{equation}
\label{fn-sum}
\widetilde{f}_n(\cdot,\theta,w,r,z,t,x)=\frac{1}{n(n-1)}\sum_{i,j=1,i\not=j}^{n}
h_{ij}^{(n)}(\cdot,\theta,w,r,z,t,x).
\end{equation}
If $\theta<r$, then $h_{ij}^{(n)}(\cdot,\theta,w,r,z,t,x)=0$ if $i>j$. So the previous sum has only $\frac{n(n-1)}{2}$ terms.

\begin{lemma}\label{lem:34}
For any $(r,z),(\theta,w) \in [0,t] \times \bR$, $\widetilde{f}_{n}(\cdot,\theta,w,r,z,t,x)
\in \cH^{\otimes (n-2)}$. Moreover, the series
$$\sum_{n \geq 2} n(n-1)I_{n-2}(\widetilde{f}_{n}(\cdot,\theta,w,r,z,t,x)) \quad converges
\ in \ L^2(\Omega),$$
uniformly in $(r,z),(\theta,w) \in [0,t] \times \bR$.
\end{lemma}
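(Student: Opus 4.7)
The plan is to mirror the proof of Lemma \ref{An-series} with one additional layer of combinatorial bookkeeping, exploiting \eqref{fn-sum} in place of \eqref{tilde-fn-sum}. By symmetry, it suffices to treat the case $\theta<r$; then only the terms with $i<j$ in \eqref{fn-sum} are nonzero, giving exactly $n(n-1)/2$ summands.

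First, to establish that $\widetilde{f}_n(\cdot,\theta,w,r,z,t,x)\in \cH^{\otimes(n-2)}$, I will show that each $f_{ij}^{(n)}(\cdot,\theta,w,r,z,t,x)$ has finite $|\cH^{\otimes(n-2)}|$-norm. In the explicit integral expression for $\|f_{ij}^{(n)}\|_{\cH^{\otimes(n-2)}}^2$, the two factors $G(t_i-\theta,x_i-w)$ and $G(t_j-r,x_j-z)$ (and their primed counterparts coming from the squared norm) carry no matching integration in $(\theta,w)$ or $(r,z)$; exactly as in Lemma \ref{An-series}, I will bound each by $1/2$ using $G(t,x)\le 1/2$ for $d=1$. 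The residual integral then factorizes into three independent pieces over $([0,\theta]\times\bR)^{2(i-1)}$, $([\theta,r]\times\bR)^{2(j-i-1)}$, and $([r,t]\times\bR)^{2(n-j-1)}$, each of which is finite by the argument of Lemma \ref{An-series}.

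Second, for the $L^2(\Omega)$ convergence I set $B_n:=n(n-1)I_{n-2}(\widetilde{f}_n(\cdot,\theta,w,r,z,t,x))$ and use \eqref{fn-sum} to write $B_n=\sum_{1\le i<j\le n}I_{n-2}(h_{ij}^{(n)})$. Applying $(\sum_k a_k)^2\le N\sum_k a_k^2$ with $N=n(n-1)/2$, together with the isometry $\E|I_{n-2}(h_{ij}^{(n)})|^2=(n-2)!\,\|h_{ij}^{(n)}\|_{\cH^{\otimes(n-2)}}^2$, yields
\[
 \E|B_n|^2 \le \frac{n(n-1)}{2}\sum_{1\le i<j\le n}\frac{1}{(n-2)!}\big\|(n-2)!\,h_{ij}^{(n)}\big\|_{\cH^{\otimes(n-2)}}^2.
\]
To estimate each summand I will repeat the scheme of Lemma \ref{An-series}: apply Cauchy--Schwarz and Lemma 4.3 of \cite{balan-song17} to extract $\Gamma_t^{n-2}$; bound the two extra $G$-factors at positions $i$ and $j$ (four after squaring) by $1/2$ each, contributing an overall factor $1/16$; and split the remaining integral over the three time windows $[0,\theta]$, $[\theta,r]$, $[r,t]$, each piece controlled by $e^{M\cdot(\text{length})}\big(\tfrac{2}{M}\big)^{(\cdot)}K_M^{(\cdot)}$ via the exponential-weight argument used in the proof of Lemma \ref{sum-alpha-finite}. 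The resulting uniform bound should read
\[
 \big\|(n-2)!\,h_{ij}^{(n)}\big\|_{\cH^{\otimes(n-2)}}^2 \le \frac{1}{16}\,\Gamma_t^{n-2}(n-2)!\,e^{Mt}\Big(\frac{2}{M}K_M\Big)^{n-2}.
\]

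Combining these bounds gives $\E|B_n|^2 \le C\,n^4\big(\tfrac{2\Gamma_t}{M}K_M\big)^{n-2}$ uniformly in $(\theta,w),(r,z)\in[0,t]\times\bR$. Choosing $M=M_t$ so large that $e^4\cdot\tfrac{2\Gamma_t}{M}K_M<1/2$ (using $n^4\le e^{4n}$) renders the series summable, and orthogonality of the Wiener chaos spaces then delivers the uniform $L^2(\Omega)$ convergence of $\sum_n B_n$ in $(\theta,w),(r,z)\in[0,t]\times\bR$. I expect the only real obstacle to be notational: the double index $(i,j)$ and the splitting into three time windows make the bookkeeping more intricate than in Lemma \ref{An-series}, but no new analytic ingredient is needed beyond the dimension-one estimate $G\le 1/2$ and Dalang's condition entering through $K_M\to 0$.
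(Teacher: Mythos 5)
Your proposal is essentially the paper's own proof: the same reduction to the individual kernels $f_{ij}^{(n)}$, the same bounding of the four unmatched factors $G(t_i-\theta,x_i-w)$, $G(t_j-r,x_j-z)$ (and their primed copies) by $1/2$, the same splitting into three time windows controlled by the exponential-weight argument of Lemma \ref{sum-alpha-finite}, and the same $(\sum_k a_k)^2\le N\sum_k a_k^2$ plus chaos orthogonality to conclude. The only quibble is a bookkeeping slip in the middle window, which carries the $j-i$ variables $t_i,\dots,t_{j-1}$ (so the factor is $\big(\tfrac{2}{M}K_M\big)^{j-i}$, not an exponent $j-i-1$); counted correctly the three exponents sum to $n-2$ and your final bound is exactly the paper's.
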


\begin{proof}
For the first statement, we fix $(r,z),(\theta,w) \in [0,t] \times \bR$. Say $\theta<r$. To prove that $\widetilde{f}_{n}(\cdot,\theta,w,r,z,t,x) \in \cH^{\otimes (n-2)}$, it suffices to show that $f_{ij}^{(n)}(\cdot,\theta,w,r,z,t,x) \in \cH^{\otimes (n-2)}$ for any $1 \leq i<j \leq n$. We proceed as in the proof of Lemma \ref{An-series}. In the computation of the squared $\cH^{\otimes (n-2)}$-norm of
$f_{ij}^{(n)}(\cdot,\theta,w,r,z,t,x)$, we bound by $1/2$ the terms $G(t_j-r,x_j-z),G(t_i-\theta,x_i-w),G(s_j-r,y_j-z),G(s_i-\theta,y_i-w)$. We obtain that:
\begin{align*}
& \|f_{ij}^{(n)}(\cdot,\theta,w,r,z,t,x)\|_{\cH^{\otimes (n-2)}}^2 \leq \\
& \quad \quad \quad \frac{1}{16} \|f_{i-1}(\cdot,\theta,w)\|_{\cH^{\otimes (i-1)}}^2 \|f_{j-i}(\cdot,r-\theta,z) \|_{\cH^{\otimes (j-i)}}^2 \|f_{n-j-1}(\cdot,t-r,x) \|_{\cH^{\otimes (n-j-1)}}^{2}<\infty.
\end{align*}

To prove the convergence of the series, we let
$$
B_n(\theta,w,r,z,t,x):=n(n-1)I_{n-2}(\widetilde{f}_n(\cdot,\theta,w,r,z,t,x))=
\sum_{i,j=1,i\not=j}^{n}I_{n-2}(h_{ij}^{(n)}(\cdot,\theta,w,r,z,t,x)).$$
Using the inequality $(\sum_{i=1}^N a_i)^2 \leq N \sum_{i=1}^N a_i^2$, we see that
\begin{align}
\nonumber
\E|B_n(\theta,w,r,z,t,x)|^2 & \leq n(n-1) \sum_{i,j=1,i\not=j}^{n}\E|I_{n-2}(h_{ij}^{(n)}(\cdot,\theta,w,r,z,t,x))|^2 \\
\label{bound-Bn}
&= n(n-1) \sum_{i,j=1,i\not=j}^{n} \frac{1}{(n-2)!}\|(n-2)!h_{ij}^{(n)}(\cdot,\theta,w,r,z,t,x))\|_{\cH^{\otimes (n-2)}}^2.
\end{align}
Proceeding as in the proof of Lemma \ref{An-series}, we see that for any $i<j$ and $\theta<r$,
\begin{align}
\label{bound-norm-hij}
& \|(n-2)!h_{ij}^{(n)}(\cdot,\theta,w,r,z,t,x))\|_{\cH^{\otimes (n-2)}}^2 \leq \\
\nonumber
&\quad \Gamma_t^{n-2}(n-2)! \int_{t_1<\ldots<t_{i-1}<\theta<t_i<\ldots<t_{j-1}<r<t_j<\ldots<t_{n-2}<t} \int_{\bR^{2(n-2)}} \prod_{k=1}^{n-2}f(x_k-y_k) \\
\nonumber
& \quad G(t-t_{n-2},x-x_{n-2}) \cdots G(t_j-r,x_j-z) G(r-t_{j-1},z-x_{j-1}) \cdots
G(t_i-\theta,x_i-w)\\
\nonumber
& \quad G(\theta-t_{i-1},w-x_{i-1}) \cdots G(t_2-t_1,x_2-x_1) G(t-t_{n-2},x-y_{n-2}) \cdots G(t_j-r,y_j-z)\\
\nonumber
& \quad G(r-t_{j-1},z-y_{j-1}) \cdots
G(t_i-\theta,y_i-w) G(\theta-t_{i-1},w-y_{i-1}) \cdots G(t_2-t_1,y_2-y_1)\\
& \qquad d{\bf x}d{\bf y}d{\bf t}
\end{align}
We bounded each of the terms $G(t_j-r,x_j-z),G(t_i-\theta,x_i-w),G(t_j-r,y_j-z),G(t_i-\theta,y_i-w)$ by $1/2$. The remaining integrals are separated into three integrals, one on $\{t_1<\ldots<t_{i-1}<\theta\} \times \bR^{i-1}$ which is bounded by $e^{Mr} \left(\frac{2}{M}\right)^{i-1} K_M^{i-1}$, the second on $\{\theta<t_i<\ldots<t_{j-1}<r\} \times \bR^{j-i}$ which is bounded by $e^{M(r-\theta)} \left(\frac{2}{M}\right)^{j-i} K_M^{j-i}$, and the last one on $\{r<t_j<\ldots<t_{n-2}<t\} \times \bR^{n-j-1}$ which is bounded by
$e^{M(t-r)} \left(\frac{2}{M}\right)^{n-j-1} K_M^{n-j-1}$, for any $M>0$.
A similar bound holds for $j<i$ and $r<\theta$.

Hence, for any $i,j=1, \ldots,n$ with $i\not=j$,
$$\|(n-2)!h_{ij}^{(n)}(\cdot,\theta,w,r,z,t,x))\|_{\cH^{\otimes (n-2)}}^2 \leq \frac{1}{16}\Gamma_t^{n-2}(n-2)!\,e^{Mt} \left(\frac{2}{M} \right)^{n-2}K_M^{n-2}.$$
Coming back to \eqref{bound-Bn}, we obtain that for any $(\theta,w),(r,z) \in [0,t] \in \bR$
$$\E|B_n(\theta,w,r,z,t,x)|^2 \leq \frac{1}{16}  [n(n-1)]^2 \Gamma_t^{n-2} e^{Mt} \left(\frac{2}{M} \right)^{n-2}K_M^{n-2}.$$

By choosing $M=M_t$ large enough, and using the orthogonality of the Wiener chaos spaces, we see that
$$\sup_{(\theta,w),(r,z) \in [0,t] \times \bR} \E\left|\sum_{k=n}^{m} B_k(\theta,w,r,z,t,x)\right|^2 \to 0 \quad \mbox{as} \quad n,m \to \infty.$$
\end{proof}

\begin{theorem}
\label{th-D2-L2}
For any $T>0$,
$$\sup_{(t,x) \in [0,T] \times \bR} \sup_{\theta,r \in [0,t]} \E \int_{\bR} \int_{\bR} |D^2_{(\theta,w),(r,z)}u(t,x)|^2dwdz\leq C_T',$$
where $C_T'>0$ is a constant depending on $T$.
\end{theorem}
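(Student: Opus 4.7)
The plan is to mimic the proof of Theorem \ref{th-D-in-L2}, now exploiting the chaos expansion \eqref{def-D2} for the second Malliavin derivative together with the symmetrization identity \eqref{fn-sum} and the norm estimates already produced in Lemma \ref{lem:34}. By orthogonality of the Wiener chaos spaces,
\[
 \E\int_\bR\int_\bR |D^2_{(\theta,w),(r,z)}u(t,x)|^2\,dw\,dz = \sum_{n\geq 2}\int_\bR\int_\bR \E\big|n(n-1)I_{n-2}(\widetilde{f}_n(\cdot,\theta,w,r,z,t,x))\big|^2\,dw\,dz,
\]
so the task reduces to bounding each term on the right by $C \cdot a^{n-2}$ with $a<1$ (after a suitable choice of $M$), uniformly in $\theta,r\in[0,t]$ and $(t,x)\in[0,T]\times\bR$.

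Using \eqref{fn-sum} and Cauchy--Schwarz (as in \eqref{bound-Bn}), I would bound
\[
 \E\big|n(n-1)I_{n-2}(\widetilde{f}_n(\cdot,\theta,w,r,z,t,x))\big|^2 \leq n(n-1)\sum_{i\neq j}\frac{1}{(n-2)!}\|(n-2)!\,h_{ij}^{(n)}(\cdot,\theta,w,r,z,t,x)\|_{\cH^{\otimes(n-2)}}^2.
\]
For fixed $i\neq j$ the function $(n-2)!\,h_{ij}^{(n)}$ admits the same Gaussian/Plancherel analysis used for \eqref{eq:98}, except that now, after integrating $dw\,dz$ and applying Fubini, two of the spatial covariances $f$ become Dirac deltas (one at position $i$ corresponding to $w$, one at position $j$ corresponding to $z$), whose spectral measure is Lebesgue. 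The whole expression can then be written as a norm on a tensor product of $|\cP_0|$-spaces with two $L^2(\bR)$ slots inserted.

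The key step is then to apply the inequality (4.16) of \cite{balan-song17} together with Lemma \ref{max-principle}: the $n-2$ ``colored'' slots produce the standard factors $\sin^2(u_k|\xi_k|)/|\xi_k|^2$ integrated against $\mu$, while each of the two white slots produces a factor bounded by $\pi t$ exactly as in \eqref{bound-L2-F}. After the change of variables that turns the time simplex into increments $u_1,\dots,u_{n-2}$ and the insertion of $e^{-Mu_k}$ factors (with $\sum u_k \leq 2t$, so an overall prefactor $e^{2Mt}$), I obtain
\[
 \int_\bR\int_\bR \|(n-2)!\,h_{ij}^{(n)}(\cdot,\theta,w,r,z,t,x)\|_{\cH^{\otimes(n-2)}}^2\,dw\,dz \leq (\pi t)^2\,\Gamma_t^{n-2}(n-2)!\,e^{2Mt}\Bigl(\tfrac{2}{M}\Bigr)^{n-2}K_M^{n-2},
\]
uniformly in $\theta,r\in[0,t]$ and $i\neq j$.

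Summing over the $n(n-1)$ pairs $(i,j)$ and combining with the Cauchy--Schwarz factor $n(n-1)$ from \eqref{bound-Bn} yields, uniformly in $(\theta,r)$ and $(t,x)\in[0,T]\times\bR$,
\[
 \int_\bR\int_\bR \E\big|n(n-1)I_{n-2}(\widetilde{f}_n(\cdot,\theta,w,r,z,t,x))\big|^2\,dw\,dz \leq [n(n-1)]^2 (\pi T)^2 e^{2MT}\Gamma_T^{n-2}\Bigl(\tfrac{2}{M}\Bigr)^{n-2}K_M^{n-2}.
\]
Since $n(n-1)\leq e^{2n}$, using Dalang's condition to pick $M=M_T$ large enough that $e^{4}\Gamma_T(2/M_T)K_{M_T}<1/2$ makes the sum over $n\geq 2$ a convergent geometric series, giving the required constant $C_T'$. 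The main obstacle is a bookkeeping one: verifying that, after integrating $dw\,dz$, the two $G$-factors that previously had no spatial integration in Lemma \ref{lem:34} now yield precisely two factors of $\pi t$ via Plancherel (rather than being merely bounded by $1/2$ as in \eqref{bound-norm-hij}); handling the three cases $i<j$, $i>j$ and the overlap symmetrically requires care but no new idea beyond what was used for the first derivative in Theorem \ref{th-D-in-L2}.
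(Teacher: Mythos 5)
Your proposal is correct and follows essentially the same route as the paper: orthogonality of the chaoses, the Cauchy--Schwarz bound \eqref{bound-Bn} over the $h_{ij}^{(n)}$ terms, Fubini in $dw\,dz$ turning the two unpaired spatial covariances into Lebesgue ($L^2(\bR)$) slots each contributing a factor $\pi t$ via \eqref{bound-L2-F}, Lemma \ref{max-principle} for the remaining colored slots, the exponential-weight change of variables, and a geometric series in $n$. The only (immaterial) discrepancy is the prefactor from the increments: you use $e^{2Mt}$ while the paper uses the cruder $e^{3Mt}$ coming from bounding each of the three time chains by $t$ separately; since the increments actually telescope to less than $t$, both are valid.
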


\begin{proof} Fix $\theta<r<t$. Using \eqref{def-D2}, Fubini's theorem and orthogonality, we have:
\begin{equation}
\label{bound-D2-L2}
\E \int_{\bR} \int_{\bR} |D_{(\theta,w),(r,z)}u(t,x)|^2dwdz=\sum_{n\geq 2}\int_{\bR} \int_{\bR}\E\big|n(n-1)I_{n-2}(\widetilde{f}_n(\cdot,\theta,w,r,z,t,x))\big|^2dwdz.
\end{equation}

To estimate the second moment of $n(n-1)I_{n-2}(\widetilde{f}_n(\cdot,\theta,w,r,z,t,x))$, we will use \eqref{bound-Bn}. Let $1 \leq i<j \leq n$.
Using \eqref{bound-norm-hij} and Fubini's theorem,
\begin{align*}
& \int_{\bR} \int_{\bR} \|(n-2)!h_{ij}^{(n)}(\cdot,\theta,w,r,z,t,x))\|_{\cH^{\otimes (n-2)}}^2 dwdz \\
& \; \leq \Gamma_t^{n-2} (n-2)! \int_{t_1<\ldots<t_{i-1}<\theta<t_i<\ldots<t_{j-1}<r<t_j<\ldots<t_{n-2}<t}\\
 & \int_{\bR^2} \int_{\bR^{2(n-2)}} f_n(t_1,x_1,\ldots,t_{i-1},x_{i-1},\theta,w,t_i,x_i,\ldots,t_{j-1},x_{j-1},r,z,t_j,x_j,\ldots,
 t_{n-2},x_{n-2},t,x)\\
 & f_n(t_1,y_1,\ldots,t_{i-1},y_{i-1},\theta,w,t_i,y_i,\ldots,t_{j-1},y_{j-1},r,z,t_j,y_j,\ldots,
 t_{n-2},y_{n-2},t,x)d{\bf x} d{\bf y} dw dz d{\bf t}.
\end{align*}
We evaluate separately the inner integral $d{\bf x}d{\bf y}dwdz$ above, for fixed ${\bf t}=(t_1,\ldots,t_{n-2})$. This integral is equal to the norm in $\cP_0^{\otimes (i-1)} \otimes L^2(\bR) \otimes \cP_0^{j-i} \otimes L^2(\bR) \otimes \cP_0^{n-j-1}$ of the function
$f_n(t_1,\cdot,\ldots,t_{i-1},\cdot,\theta,\cdot,t_i,\cdot,\ldots,t_{j-1},\cdot,r,\cdot,
t_j,\cdot,\ldots,t_{n-2},\cdot,t,x)$, which in turn is equal to:
\begin{align*}
& \int_{\bR^n} |\cF G(t_2-t_1,\cdot)(\xi_1)|^2 \cdots |\cF G(\theta-t_{i-1},\cdot)(\xi_1+\ldots+\xi_{i-1})|^2 |\cF G(t_{i}-\theta,\cdot)(\xi_1+\ldots+\xi_{i-1}+\zeta)|^2\\
& \quad \cdots |\cF G(r-t_{j-1},\cdot)(\xi_1+\ldots+\xi_{j-1}+\zeta)|^2 |\cF G(t_{j}-r,\cdot)(\xi_1+\ldots+\xi_{j-1}+\zeta+\xi)|^2  \\
& \quad \cdots |\cF G(t-t_{n-2},\cdot)(\xi_1+\ldots+\xi_{n-2}+\zeta+\xi)|^2
\prod_{k=1}^{n-2}\mu(d\xi_k) d\zeta d\xi.
\end{align*}
To estimate this norm, we use Lemma \ref{max-principle} for the integrals $\mu(d\xi_k)$ with $k=1,\ldots,n-2$, the bound \eqref{bound-L2-F} for the $d\xi$ integral, and a similar bound for the $d\zeta$ integral:
$$\sup_{\eta \in \bR}\int_{\bR}|\cF G(t_i-\theta,\cdot)(\zeta+\eta)|^2 d\zeta \leq \pi T.$$
Therefore, the norm mentioned above is bounded by
\begin{align*}
& (\pi T)^2 \int_{\bR^{n-2}} \frac{\sin^2((t_2-t_1)|\xi_1|)}{|\xi_1|^2} \ldots
\frac{\sin^2((\theta-t_{i-1})|\xi_{i-1}|)}{|\xi_{i-1}|^2}
\ldots
\frac{\sin^2((r-t_{j-1})|\xi_{j-1}|)}{|\xi_{j-1}|^2} \ldots \\
& \quad \quad \quad \frac{\sin^2((t-t_{n-2})|\xi_{n-2}|)}{|\xi_{n-2}|^2} \prod_{k=1}^{n-2}\mu(d\xi_k).
\end{align*}
We now come back and take the integral $dt_1 \ldots dt_{n-2}$. We use the change of variable $u_1=t_2-t_1,\ldots,u_{i-1}=\theta-t_{i-1},\ldots,u_{j-1}=r-t_{j-1},
\ldots,u_{n-2}=t-t_{n-2}$. Inserting the terms $e^{-Mu_k}$ for $k=1,\ldots,n$ and using the fact that $e^{M(u_1+\ldots+u_{n-2})} \leq e^{3Mt}$, and proceeding similarly as in the proof of Theorem \ref{th-D-in-L2}, we see that
$$\int_{\bR} \int_{\bR}\|h_{ij}^{(n)}(\cdot,\theta,w,r,z,t,x) \|_{\cH^{\otimes(n-2)}}dw dz \leq \Gamma_t^{n-2}(n-2)! (\pi T)^2 e^{3Mt} \left(\frac{2}{M}\right)^{n-2} K_M^{n-2},$$
for any $M>0$. Using \eqref{bound-Bn},
it follows that
\begin{equation}
\label{bpund-Bn-L2}
\E\int_{\bR} \int_{\bR}|n(n-1)I_{n-2}(\widetilde{f}_n(\cdot,\theta,w,r,z,t,x))|^2 dwdz \leq [n(n-1)]^2 \Gamma_t^{n-2} (\pi T)^2 e^{3Mt}\left(\frac{2}{M}\right)^{n-2} K_M^{n-2}.
\end{equation}

Coming back to \eqref{bound-D2-L2}, it follows that
$$\E \int_{\bR} \int_{\bR}|D_{(\theta,w),(r,z)}^2 u(t,x)|^2 dwdz \leq (\pi T)^2 e^{3Mt} \sum_{n\geq 2} [n(n-1)]^2 \Gamma_t^{n-2} \left(\frac{2}{M}\right)^{n-2} K_M^{n-2}.$$
The conclusion follows as in the last part of the proof of Theorem \ref{th-D-in-L2}.
\end{proof}

\begin{remark}
{\rm Fix $0<r<t$ and $x \in \bR$. Taking the integral $d\theta$ on $[0,t]$ in \eqref{bpund-Bn-L2}, we infer that with probability $1$, for almost all $z\in\bR$, the map $(\theta,w) \mapsto I_{n-2}((\widetilde{f}_n(\cdot,\theta,w,r,z,t,x)))$ belongs to $L^2(\bR^2)$. We denote by $\cF_{\theta,w}[I_{n-2}((\widetilde{f}_n(\cdot,*,r,z,t,x)))]$ its Fourier transform with respect to the missing $(\theta,w)$-variable, denoted by $*$. 

Similarly, by Theorem \ref{th-D2-L2},  with probability $1$, for almost all $z\in\bR$, the map $(\theta,w) \mapsto D_{(\theta,w),(r,z)}^2 u(t,x)$ belongs to $L^2(\bR^2)$. We denote by
$\cF_{\theta,w}[D_{*,(r,z)}^2 u(t,x)]$ the Fourier transform of this function.
We claim that:
\begin{equation}
\label{chaos-Fourier-D2} \cF_{\theta,w} [D_{*,(r,z)}^2
u(t,x)]=\sum_{n\geq 2}n(n-1) \cF_{\theta,w} \big[I_{n-2}(
\widetilde{f}_n(\cdot,*,r,z,t,x))\big].
\end{equation}
Indeed, using the proof of Theorem \ref{th-D2-L2} one verifies
that the series on the right-hand side of \eqref{chaos-Fourier-D2}
converges in $L^2(\Omega;L^2(\bR))$; for this, it suffices to prove
that
\[
\sum_{n\geq 2}n(n-1) \|\cF_{\theta,w} \big[I_{n-2}(
\widetilde{f}_n(\cdot,*,r,z,t,x))\big]\|_{L^2(\Omega;L^2(\bR))}<\infty.
\]
Next, fix $N>2$ and observe that
\[
\cF_{\theta,w} \Big[ \sum_{n=2}^N n(n-1) I_{n-2}(
\widetilde{f}_n(\cdot,*,r,z,t,x))\Big] = \sum_{n=2}^N n(n-1)
\cF_{\theta,w} \big[I_{n-2}( \widetilde{f}_n(\cdot,*,r,z,t,x))\big].
\]
In order to conclude that \eqref{chaos-Fourier-D2} holds, it
suffices to prove that the left-hand side above converges to
$\cF_{\theta,w} [D_{*,(r,z)}^2 u(t,x)]$, as $N\to \infty$. This
follows by applying Plancherel's theorem and using the arguments in
the proof of Lemma \ref{lem:34}. }
\end{remark}

\begin{lemma}
\label{Fourier-lemma-D2}
For any $t>0$, $x \in \bR$ and $(t_1,x_1),\ldots,(t_{n-2},x_{n-2}),(r,z) \in [0,t] \times \bR$, the function $(\theta,w) \mapsto \widetilde{f}_n(t_1,x_1,\ldots,t_{n-2},x_{n-2},\theta,w,r,z,t,x)$ is in $L^2(\bR^2)$, If we denote by $\cF_{\theta,w}\widetilde{f}_n(t_1,x_1,\ldots,t_{n-2},x_{n-2},*,,r,z,t,x)$ the Fourier transform of this function, then
\begin{equation}
\label{Fourier-H}
\cF_{\theta,w}\widetilde{f}_n(\cdot,*,,r,z,t,x)(\tau,\xi) \in \cH^{\otimes (n-2)}
\end{equation}
for almost all $z \in \bR$ and for almost all $(\tau,\xi)\in \bR^2$.
\end{lemma}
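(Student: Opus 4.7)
My plan is to handle the two assertions separately. For the first, namely that $(\theta,w)\mapsto \widetilde{f}_n(t_1,x_1,\ldots,t_{n-2},x_{n-2},\theta,w,r,z,t,x)$ belongs to $L^2(\bR^2)$, I would argue directly from the form $G(s,y)=\frac12 1_{\{|y|<s\}}$ available in dimension $d=1$. By the decomposition \eqref{fn-sum}, $\widetilde{f}_n$ is a finite linear combination of the symmetrizations $h_{ij}^{(n)}$ of the functions $f_{ij}^{(n)}$; in each $f_{ij}^{(n)}$ only the product $G(t_i-\theta,x_i-w)\,G(\theta-t_{i-1},w-x_{i-1})$ (together with the ordering indicator $1_{\{t_{i-1}<\theta<t_i\}}$) depends on $(\theta,w)$. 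Both $G$-factors are bounded by $1/2$, and the indicator combined with the support constraints $|x_i-w|<t_i-\theta$ and $|w-x_{i-1}|<\theta-t_{i-1}$ confines $(\theta,w)$ to a bounded rectangle depending on the frozen variables. Thus the map is bounded and compactly supported in $\bR^2$, so it lies in $L^2(\bR^2)$ and its Fourier transform in $(\theta,w)$ is well-defined.

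For the main claim \eqref{Fourier-H}, I plan to combine the spectral description of $\cH^{\otimes(n-2)}$ from Theorem \ref{jolis-th} with the scalar Plancherel identity in $(\theta,w)$ and the uniform bound \eqref{bpund-Bn-L2} established in the proof of Theorem \ref{th-D2-L2}. Under Assumption A,
\[
\|\widetilde{f}_n(\cdot,\theta,w,r,z,t,x)\|_{\cH^{\otimes(n-2)}}^2 =\int_{(\bR\times \bR)^{n-2}}\big|\cF_{(t_k,x_k)}\widetilde{f}_n(\cdot,\theta,w,r,z,t,x)(\tau_k,\xi_k)\big|^2 \prod_{k=1}^{n-2}\nu(d\tau_k)\mu(d\xi_k).
\]
Integrating this equation over $d\theta\,dw\,dz$, interchanging integrals via Fubini (all integrands are non-negative), and applying the scalar Plancherel identity in $(\theta,w)$ fibre-wise in the variables $(\tau_k,\xi_k,z)$, I would derive
\[
\int_{\bR}\!dz\int_{\bR^2}\!\big\|\cF_{\theta,w}\widetilde{f}_n(\cdot,*,r,z,t,x)(\tau,\xi)\big\|_{\cH^{\otimes(n-2)}}^2 d\tau d\xi=(2\pi)^2\int_{\bR}\!dz\int_{\bR^2}\!\|\widetilde{f}_n(\cdot,\theta,w,r,z,t,x)\|_{\cH^{\otimes(n-2)}}^2 d\theta dw.
\]

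It then remains to verify that the right-hand side above is finite. The integrand vanishes unless $\theta\in(0,t)$ (since each $h_{ij}^{(n)}$ carries the appropriate ordering indicator), reducing the $d\theta$ integral to one over $[0,t]$. By the Wiener chaos isometry applied to the symmetric kernel $\widetilde{f}_n$,
\[
[n(n-1)]^2(n-2)!\!\int_\bR\!\int_\bR\!\|\widetilde{f}_n(\cdot,\theta,w,r,z,t,x)\|_{\cH^{\otimes(n-2)}}^2 dwdz =\E\!\int_\bR\!\int_\bR\!|n(n-1)I_{n-2}(\widetilde{f}_n(\cdot,\theta,w,r,z,t,x))|^2 dwdz,
\]
which is bounded in \eqref{bpund-Bn-L2} uniformly in $(\theta,r)\in[0,t]^2$; integrating that uniform bound over $\theta\in[0,t]$ yields finiteness. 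Fubini then gives that the integrand on the left-hand side of the previous display is finite for a.e. $z$ and a.e. $(\tau,\xi)$, which is precisely \eqref{Fourier-H}. The only delicate point is justifying the interchange of the Fourier transform in $(\theta,w)$ with the spectral integral defining the $\cH^{\otimes(n-2)}$-norm; because every integrand is non-negative, this follows from Fubini together with fibre-wise scalar Plancherel, and does not call for any Bochner-space Plancherel theorem.
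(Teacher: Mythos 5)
Your proposal is correct and follows essentially the same route as the paper: the $L^2(\bR^2)$ claim is reduced via \eqref{fn-sum} to the individual kernels $f_{ij}^{(n)}$, whose dependence on $(\theta,w)$ is bounded and supported in a bounded set, and the membership \eqref{Fourier-H} is obtained by expressing the $\cH^{\otimes(n-2)}$-norm spectrally, applying Fubini and scalar Plancherel in $(\theta,w)$ (with the interchange of the two partial Fourier transforms justified by joint integrability of $\widetilde{f}_n$), and invoking the uniform bound \eqref{bpund-Bn-L2} integrated over $\theta\in[0,t]$ to conclude a.e. finiteness. This matches the paper's argument in both structure and the key estimates used.
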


\begin{proof}
By \eqref{fn-sum}, it is enough to prove that $(\theta,w)\mapsto f_{ij}^{(n)}(t_1,x_1,\ldots,t_{n-2},x_{n-2},\theta,w,r,z,t,x)$ is in $L^2(\bR^2)$ for any
$i,j=1,\ldots,n$ with $i\not=j$. This is clear since, if for instance
 $i<j$, $$\int_0^t \int_{\bR}G(t_j-\theta,x_j-w)G(\theta-t_{j-1},w-x_{j-1})dw d\theta<\infty.$$

We now prove \eqref{Fourier-H}. By \eqref{bpund-Bn-L2},
$\int_{0}^t \int_{\bR} \E|I_{n-2}(\widetilde{f}_n(\cdot,\theta,w,r,z,t,x))|^2 dw d\theta<\infty$, for almost all $z \in \bR$. This integral is equal to $(n-2)!$ multiplied by the following quantity:
\begin{align*}
&\int_{\bR} \int_{\bR}\|\widetilde{f}_n(\cdot,\theta,w,r,z,t,x)\|_{\cH^{\otimes (n-2)}}^2 dw d\theta\\
&\quad
=\int_{\bR^2}\int_{\bR^{2(n-2)}}|\cF_{t_1,x_1,\ldots,t_{n-2},x_{n-2}}
\widetilde{f}_n(\cdot,\theta,w,r,z,t,x)(\tau_1,\xi_1,\ldots,\tau_{n-2},\xi_{n-2})|^2\\
& \qquad \qquad \times
\prod_{i=1}^{n-2}\nu(d\tau_i)\prod_{i=1}^{n-2}\mu_i(d\xi_i) dw
d\theta
\end{align*}
where $\cF_{t_1,x_1,\ldots,t_{n-2},x_{n-2}}\widetilde{f}_n(\cdot,\theta,w,r,z,t,x)$ denotes the Fourier transform with respect to the variables $t_1,x_1,\ldots,t_{n-2},x_{n-2}$. We apply Fubini's theorem, followed by Plancherel's theorem:
\begin{align*}
&\int_{\bR^2}|\cF_{t_1,x_1,\ldots,t_{n-2},x_{n-2}}\widetilde{f}_n(\cdot,\theta,w,r,z,t,x)
(\tau_1,\xi_1,\ldots,\tau_{n-2},\xi_{n-2})|^2 dw d\theta\\
& \quad = \frac{1}{(2\pi)^2}
\int_{\bR^2}|\cF_{\theta,w}[\cF_{t_1,x_1,\ldots,t_{n-2},x_{n-2}}
\widetilde{f}_n(\cdot,*,r,z,t,x)
(\tau_1,\xi_1,\ldots,\tau_{n-2},\xi_{n-2})](\tau,\xi)|^2 d\tau d\xi,
\end{align*}
where $*$ denotes missing $(\theta,w)$-variable and $\cF_{\theta,w}$ is the Fourier transform with respect to this variable. Since $\widetilde{f}_n(\cdot,*,r,z,t,x) \in L^1(\bR^{n-1})$, we can switch the order of the variables $(t_1,x_1),\ldots,(t_{n-2},x_{n-2})$ and $(\theta,w)$ when calculating the Fourier transform, i.e.
\begin{align}
\nonumber
& \cF_{\theta,w}[\cF_{t_1,x_1,\ldots,t_{n-2},x_{n-2}}
\widetilde{f}_n(\cdot,*,r,z,t,x)
(\tau_1,\xi_1,\ldots,\tau_{n-2},\xi_{n-2})](\tau,\xi)\\
\label{two-Fourier}
& \quad \quad \quad =\cF_{t_1,x_1,\ldots,t_{n-2},x_{n-2}}[\cF_{\theta,w}
\widetilde{f}_n(\cdot,*,r,z,t,x)(\tau,\xi)](\tau_1,\xi_1,\ldots,\tau_{n-2},\xi_{n-2}) \\
\label{two-Fourier2}
& \quad \quad \quad = \cF \widetilde{f}_n(\cdot,r,z,t,x)(\tau_1,\xi_1,\ldots,\tau_{n-2},\xi_{n-2},\tau,\xi).
\end{align}
Applying Fubini's theorem again, we conclude that for almost all $z \in \bR$,
\begin{align*}
& \int_{\bR^2} \int_{\bR^{2(n-2)}}|\cF_{t_1,x_1,\ldots,t_{n-2},x_{n-2}}[\cF_{\theta,w}
\widetilde{f}_n(\cdot,*,r,z,t,x)(\tau,\xi)](\tau_1,\xi_1,\ldots,\tau_{n-2},\xi_{n-2})|^2 \\
& \qquad \qquad \qquad \times
\prod_{i=1}^{n-2}\nu(d\tau_i)\prod_{i=1}^{n-2}\mu_i(d\xi_i) d\tau
d\xi <\infty.
\end{align*}
Hence, for almost all $z \in \bR$ and for almost all $(\tau,\xi) \in \bR^2$, the integral
\begin{align}
\label{norm-H-Fourier}
&\|\cF_{\theta,w} \big[\tilde{f}_n(\cdot,*,r,z,t,x) \big]
(\tau,\xi)\big) \|^2_{\cH^{\otimes (n-2)}}\\
\nonumber & \quad =
\int_{\bR^{2(n-2)}}|\cF_{t_1,x_1,\ldots,t_{n-2},x_{n-2}}[\cF_{\theta,w}
\widetilde{f}_n(\cdot,*,r,z,t,x)(\tau,\xi)](\tau_1,\xi_1,\ldots,\tau_{n-2},\xi_{n-2})|^2
\\
& \qquad \qquad \times
\prod_{i=1}^{n-2}\nu(d\tau_i)\prod_{i=1}^{n-2}\mu_i(d\xi_i)
\end{align}
is finite. Relation \eqref{Fourier-H} follows using a characterization of the space $\cH^{\otimes (n-2)}$ (in terms of the integral of the squared Fourier transform) similar to the one given by Theorem \ref{jolis-th} for space $\cH$.
\end{proof}

The next result shows that in our context, we can interchange the order of the Fourier transform and the multiple integral, a fact which is highly non-trivial in the general case. Lemma \ref{Fourier-lemma-D2} shows that the integral $I_{n-2}\big(\cF_{\theta,w}\widetilde{f}_{n}(\cdot,*,r,z,t,x)(\tau,\xi)\big)$ is well-defined.

\begin{lemma}
\label{lem-G-eq} With probability $1$, for all $r\in [0,t]$ and for
almost all $z \in \bR$ and $(\tau,\xi)\in \bR^2$,
$$\cF_{\theta,w}[I_{n-2}(\widetilde{f}_{n}(\cdot,*,r,z,t,x))](\tau,\xi)=
I_{n-2}\big(\cF_{\theta,w}\widetilde{f}_{n}(\cdot,*,r,z,t,x)(\tau,\xi)\big).$$
\end{lemma}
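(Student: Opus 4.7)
The plan is a density argument via Plancherel's theorem, exploiting that both multiple Wiener integration and partial Fourier transformation are Hilbert-space isometries (up to constants). I would verify the identity first on tensor-product functions where the two operations manifestly commute, and then extend by approximation in a suitable vector-valued $L^2$ space.

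For the base case, fix an elementary tensor $\varphi = g \otimes h$ with $g \in \cD(\bR^{2(n-2)})$ depending on the ``$\cdot$'' variables and $h \in \cD(\bR^2)$ depending on the $(\theta,w)$ variable. Since $I_{n-2}$ acts only on the first $n-2$ coordinates, $I_{n-2}(\varphi(\cdot,\theta,w)) = h(\theta,w)\,I_{n-2}(g)$ and hence $\cF_{\theta,w}[I_{n-2}(\varphi(\cdot,*))](\tau,\xi) = I_{n-2}(g)\,\cF h(\tau,\xi)$. On the other hand $\cF_{\theta,w}\varphi(\cdot,*)(\tau,\xi) = g\,\cF h(\tau,\xi)$, and applying $I_{n-2}$ yields the same expression. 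The identity thus extends by linearity to finite sums of such tensors.

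For the general case, approximate $\widetilde{f}_n(\cdot,*,r,z,t,x)$ in the Hilbert space $L^2(\bR^2_{d\theta\,dw};\cH^{\otimes(n-2)})$ by a sequence $\varphi^{(k)}$ of finite sums of the above elementary tensors; such a sequence exists because $\cD(\bR^{2(n-2)})$ is dense in $\cH^{\otimes(n-2)}$ and $\cD(\bR^2)$ is dense in $L^2(\bR^2)$, while Theorem \ref{th-D2-L2} together with \eqref{norm-H-Fourier} ensures that the limit function lies in this space for a.e.\ $z$. The It\^o-type isometry $\E|I_{n-2}(\psi)|^2 \le (n-2)!\,\|\psi\|_{\cH^{\otimes(n-2)}}^2$ applied fibre-wise in $(\theta,w)$, followed by Plancherel's theorem, converts the convergence $\varphi^{(k)}\to \widetilde{f}_n(\cdot,*,r,z,t,x)$ into $L^2(\Omega\times\bR^2_{d\tau\,d\xi})$-convergence of the left-hand sides of the identity. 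Simultaneously, relation \eqref{norm-H-Fourier} says that $\cF_{\theta,w}$ is (up to a multiplicative constant) an isometry from $L^2(\bR^2_{d\theta\,dw};\cH^{\otimes(n-2)})$ onto $L^2(\bR^2_{d\tau\,d\xi};\cH^{\otimes(n-2)})$, and another application of the It\^o isometry converts this into $L^2(\Omega\times\bR^2_{d\tau\,d\xi})$-convergence of the right-hand sides. Extracting a common subsequence then produces a single null set in $\Omega\times\bR^2$ outside of which the identity holds, for each fixed $r$ and a.e.\ $z$.

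The most delicate point is the uniformity in $r$: the statement requires a single probability-$1$ event valid for \emph{every} $r\in[0,t]$, whereas the above argument gives the identity only at each fixed $r$. To consolidate, I would first pin down the identity on a countable dense set $D\subset[0,t]$ using a single null set, and then extend to arbitrary $r$ by an $L^2(\Omega)$-continuity argument in $r$ for both sides, modelled on the continuity estimates in the proofs of Lemma \ref{An-series} and Theorem \ref{D-measurable-th}, with the uniform bounds from the proof of Theorem \ref{th-D2-L2} providing the necessary control independent of $r$. This final step is essentially bookkeeping, but carrying it out carefully across all the variables is the main technical obstacle.
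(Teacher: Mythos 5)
Your proposal is correct in substance but follows a genuinely different route from the paper. The paper argues by duality: it pairs both candidate expressions against $\overline{\cF g}$ for $g\in L^2(\bR^2)$, obtaining random variables $X_g$ and $Y_g$; it first checks that both lie in the $(n-2)$-th Wiener chaos, then verifies $\E(GX_g)=\E(GY_g)$ for $G=I_{n-2}(g_{n-2})$ by writing out both expectations explicitly (via the identity $\E[I_{n-2}(g_{n-2})I_{n-2}(\psi)]=(n-2)!\langle g_{n-2},\psi\rangle_{\cH^{\otimes(n-2)}}$, Fubini --- justified by the bound \eqref{bpund-Bn-L2} --- and Plancherel in the $(\theta,w)$-variable), and finally upgrades ``$X_g=Y_g$ a.s.\ for each $g$'' to a single null set using separability of $L^2(\bR^2)$. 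Your argument instead verifies the commutation on elementary tensors $g\otimes h$, where it is immediate, and extends by density of such tensors in $L^2(\bR^2;\cH^{\otimes(n-2)})$ together with the boundedness of both composite maps $\psi\mapsto\cF_{\theta,w}[I_{n-2}(\psi(\cdot,*))]$ and $\psi\mapsto I_{n-2}(\cF_{\theta,w}\psi(\cdot,*))$ into $L^2(\Omega\times\bR^2)$. Your route is conceptually cleaner (two bounded operators agreeing on a dense set), but it requires setting up vector-valued $L^2$ spaces and, in particular, identifying the Bochner-space Fourier transform with the paper's ``pointwise in the remaining variables'' Fourier transform --- a Fubini step that the paper isolates as relation \eqref{two-Fourier} in Lemma \ref{Fourier-lemma-D2}, which you would need to invoke. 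The integrability input is the same in both proofs (Theorem \ref{th-D2-L2} and \eqref{norm-H-Fourier}). Your final concern about uniformity in $r$ is well taken, but note that the paper's own proof is also carried out for fixed $r$ and $z$, so your countable-dense-set-plus-continuity patch is extra care rather than a repair of a defect specific to your approach.
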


\begin{proof} Let $h_1(\tau,\xi)=\cF_{\theta,w}[I_{n-2}(\widetilde{f}_{n}(\cdot,*,r,z,t,x))](\tau,\xi) $ and $h_2(\tau,\xi)=I_{n-2}(\cF_{\theta,w}\widetilde{f}_{n}(\cdot,*,\linebreak r,z,t,x)(\tau,\xi))$.
For an arbitrary function $g \in L^2(\bR^2)$, we denote
$$X_g =\int_{\bR^2} h_1(\tau,\xi)\overline{\cF g(\tau,\xi)}d\tau d\xi  \quad \mbox{and} \quad
Y_g =\int_{\bR^2} h_2(\tau,\xi)\overline{\cF g(\tau,\xi)}d\tau d\xi.$$
We will show that for any $g \in L^2(\bR^2)$ fixed,
\begin{equation}
\label{G-equality}
\E(G X_g)=\E(G Y_g) \quad \mbox{for all} \quad G \in L^2(\Omega),
\end{equation}
which implies that $X_g=Y_g$ a.s., the negligible set depending on $g$. Since $L^2(\bR^2)$ is separable, there exists a countable dense set $S$ in $L^2(\bR^2)$. Hence, there exists an event $\Omega_0$ of probability $1$ on which $X_g=Y_g$ for any $g \in S$. Since $\{\cF g;g \in S\}$ is dense in $L^2(\bR^2)$, it follows that on $\Omega_0$, $\langle h_1, \phi \rangle_{L^2(\bR^2)} =\langle h_2, \phi \rangle_{L^2(\bR^2)}$ for all $\phi \in L^2(\bR^2)$, and hence $h_1=h_2$ a.e.

First, we prove that $X_g \in \cH_{n-2}$. (A similar argument shows
that $Y_g \in \cH$.) For this, it suffices to prove that $\E
[I_m(h_m)X_g]=0$, for all $m\neq n-2$ and all symmetric $h_m\in
\cH^{\otimes m}$. By Plancherel's theorem,
\begin{equation}
\label{def-Xg}
X_g=(2\pi)\int_{\bR^2}I_{n-2}(\widetilde{f}_{n}(\cdot,\theta,w,r,z,t,x))g(\theta,w) dw d\theta,
\end{equation}
and hence,
\begin{equation}\label{eq:82}
 \E [I_m(h_m)X_g] = (2\pi)\E \left[\int_{\bR^2} I_m(h_m) I_{n-2}(\widetilde{f}_{n}(\cdot,\theta,w,r,z,t,x))g(\theta,w) dw d\theta\right].
\end{equation}
Now, we would like to apply Fubini theorem, for which we need to check that
\[
 \E \int_{\bR^2} |I_m(h_m) I_{n-2}(\widetilde{f}_{n}(\cdot,\theta,w,r,z,t,x))g(\theta,w)| dw d\theta<\infty.
\]
But we have, by Fubini theorem (because the integrand is now non-negative) and Cauchy-Schwarz inequality (applied twice),
\begin{align*}
 & \E \int_{\bR^2} |I_m(h_m) I_{n-2}(\widetilde{f}_{n}(\cdot,\theta,w,r,z,t,x))g(\theta,w)| dw d\theta \\
 &\quad = \int_{\bR^2} \E[|I_m(h_m) I_{n-2}(\widetilde{f}_{n}(\cdot,\theta,w,r,z,t,x))|] \, |g(\theta,w)| dw d\theta\\
 & \quad \leq \int_{\bR^2}  \big(\E[I_m(h_m)^2]\big)^\frac12 \big(\E[I_{n-2}(\widetilde{f}_{n}(\cdot,\theta,w,r,z,t,x))^2]\big)^\frac12 \, |g(\theta,w)| dw d\theta\\
 & \quad = (m!)^{1/2} \|h_m\|_{\cH^{\otimes m}} \int_{\bR^2}  \big(\E[I_{n-2}(\widetilde{f}_{n}(\cdot,\theta,w,r,z,t,x))^2]\big)^\frac12 \, |g(\theta,w)| dw d\theta\\
 & \quad \leq  (m!)^{1/2} \|h_m\|_{\cH^{\otimes m}} \left(\int_{\bR^2} \E[I_{n-2}(\widetilde{f}_{n}(\cdot,\theta,w,r,z,t,x))^2] dw d\theta \right)^\frac12 \, \|g\|_{L^2(\bR^2)}<\infty,
 \end{align*}
where we have applied \eqref{bpund-Bn-L2}.
By Fubini's theorem, we obtain that for any $m\not=n-2$,
$$\E [I_m(h_m)X_g] =  \int_{\bR^2} \E[I_m(h_m) I_{n-2}(\widetilde{f}_{n}(\cdot,\theta,w,r,z,t,x))] g(\theta,w) dw d\theta=0.$$

We now prove \eqref{G-equality}. Since both $X_g$ and $Y_g$ are in $\cH_{n-2}$,
it is enough to assume that $G=I_{n-2}(g_{n-2})$ for some symmetric $g_{n-2} \in \cH^{\otimes (n-2)}$.
Using Fubini's theorem and \eqref{def-Xg},
\begin{align*}
\E(G X_g)&=(2\pi)\int_{\bR^2}\E\left[I_{n-2}(g_{n-2})
I_{n-2}(\widetilde{f}_{n}(\cdot,\theta,w,r,z,t,x))\right]g(\theta,w)d\theta dw \\
&=(2\pi)\int_{\bR^2}(n-2)!\langle g_{n-2},\widetilde{f}_{n}(\cdot,\theta,w,r,z,t,x) \rangle_{\cH^{\otimes (n-2)}}g(\theta,w)d\theta dw
\end{align*}
and
\begin{align*}
& \E(G Y_g)\\
&\; =\int_{\bR^2}\E [I_{n-2}(g_{n-2})I_{n-2}(\cF_{\theta,w}\widetilde{f}_{n}(\cdot,*, r,z,t,x)(\tau,\xi))]\overline{\cF g(\tau,\xi)}d\tau d\xi  \\
& \; = \int_{\bR^2} (n-2)! \langle g_{n-2}, \cF_{\theta,w}\widetilde{f}_{n}(\cdot,*, r,z,t,x)(\tau,\xi))\rangle_{\cH^{\otimes (n-2)}}
\overline{\cF g(\tau,\xi)}d\tau d\xi  \\
& \; =(n-2)! \int_{\bR^2} \int_{\bR^{2(n-2)}} \cF_{t_1,x_1,\ldots,t_{n-2},x_{n-2}} (\cF_{\theta,w}\widetilde{f}_{n}(\cdot,*, r,z,t,x)(\tau,\xi))(\tau_1,\xi_1,\ldots,\tau_{n-2},\xi_{n-2}) \\
 & \quad  \qquad \overline{\cF g_{n-2}(\tau_1,\xi_1,\ldots,\tau_{n-2},\xi_{n-2})}\,\,
\overline{\cF g(\tau,\xi)}\,\prod_{i=1}^{n-2}\nu(d\tau_i) \prod_{i=1}^{n-2}\mu(d\xi_i)d\tau d\xi.
\end{align*}
We use \eqref{two-Fourier}, followed by Fubini's theorem. For the $d\tau d\xi$ integral, we use Plancherel's theorem to eliminate the Fourier transform in the $(\theta,w)$-variable. We obtain:
\begin{align*}
\E(G Y_g)&= (n-2)!  \int_{\bR^{2(n-2)}}\Big(2\pi\int_{\bR^2} \cF_{t_1,x_1,\ldots,t_{n-2},x_{n-2}} \widetilde{f}_{n}(\cdot,\theta,w, r,z,t,x)(\tau_1,\xi_1,\ldots,\tau_{n-2},\xi_{n-2}) \Big.\\
& \qquad \qquad \qquad \qquad \Big.g(\theta,w)d\theta dw\Big) \, \overline{\cF g_{n-2}(\tau_1,\xi_1,\ldots,\tau_{n-2},\xi_{n-2})} \, \prod_{i=1}^{n-2}\nu(d\tau_i) \prod_{i=1}^{n-2}\mu(d\xi_i) \\
&=(n-2)!(2\pi)\int_{\bR^2} \langle g_{n-2},\widetilde{f}_n(\cdot,\theta,w,r,z,t,x) \rangle_{\cH^{\otimes (n-2)}} g(\theta,w)d\theta dw.
\end{align*}
This proves \eqref{G-equality}.
\end{proof}


In summary, relation \eqref{chaos-Fourier-D2} and Lemma
\ref{lem-G-eq} show that for all $r\in [0,t]$ and almost all $z \in
\bR$ and for almost all $(\tau,\xi) \in \bR^2$,
\begin{align}
\nonumber
\cF_{\theta,w} \big[
D^2_{*,(r,z)} u(t,x)\big](\tau,\xi)&=\sum_{n\geq 2}n(n-1)\cF_{\theta,w}
\Big[ I_{n-2}(\widetilde{f}_n(\cdot,*,r,z,t,x))  \Big] (\tau,\xi)\\
\label{chaos-D2}
&=\sum_{n\geq 2}n(n-1)I_{n-2}\big(\cF_{\theta,w} \widetilde{f}_n(\cdot,*,r,z,t,x)
 (\tau,\xi)\big),
\end{align}
which gives the chaos expansion of the Fourier transform of
$D^2_{*,(r,z)} u(t,x)$.

\begin{theorem}
\label{H-L2-norm}
Let $D_r^2 u(t,x)$ denote $D_{*,(r,\cdot)}^2$,  where $*$ is the missing $(\theta,w)$-variable and $\cdot$ is the missing $z$-variable.
For any $T>0$,
$$\sup_{(t,x) \in [0,T] \times \bR}\sup_{r \in [0,t]}\E\|D_r^2u(t,x)\|_{\cH \otimes L^2(\bR)}^2\leq C_T'',$$
where $C_T''>0$ is a constant depending on $T$.
\end{theorem}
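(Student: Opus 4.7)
The plan is to exploit the Fourier characterization of $\cH$ from Theorem \ref{jolis-th} on the first tensor factor, so as to reduce the claim to the estimate \eqref{2-moment-I} that was already established in the course of proving Theorem \ref{th-D-in-L2}. More precisely, the Hilbertian tensor structure gives
$$\|D_r^2 u(t,x)\|_{\cH\otimes L^2(\bR)}^2=\int_{\bR}\|D^2_{*,(r,z)}u(t,x)\|_{\cH}^2\,dz=\int_{\bR}\int_{\bR^2}\bigl|\cF_{\theta,w}\bigl[D^2_{*,(r,z)}u(t,x)\bigr](\tau,\xi)\bigr|^2\nu(d\tau)\mu(d\xi)\,dz,$$
where the second equality uses Theorem \ref{jolis-th} applied in the $(\theta,w)$-variables.

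Next I would insert the chaos expansion \eqref{chaos-D2}, take expectation, and appeal to the orthogonality of Wiener chaoses together with Fubini to obtain
$$\E\|D_r^2u(t,x)\|_{\cH\otimes L^2(\bR)}^2=\sum_{n\ge 2}[n(n-1)]^2(n-2)!\int_{\bR}\int_{\bR^2}\bigl\|\cF_{\theta,w}\widetilde{f}_n(\cdot,*,r,z,t,x)(\tau,\xi)\bigr\|_{\cH^{\otimes(n-2)}}^2\nu(d\tau)\mu(d\xi)\,dz.$$
Interchanging the sum with the integrals is legitimate once the final bound is in hand (all terms are non-negative, so monotone convergence applies).

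Now comes the one non-routine step: I would use the identity \eqref{norm-H-Fourier} established in Lemma \ref{Fourier-lemma-D2}, together with one more Fubini on the Fourier side, to rewrite
$$\int_{\bR^2}\bigl\|\cF_{\theta,w}\widetilde{f}_n(\cdot,*,r,z,t,x)(\tau,\xi)\bigr\|_{\cH^{\otimes(n-2)}}^2\nu(d\tau)\mu(d\xi)=\bigl\|\widetilde{f}_n(\cdot,r,z,t,x)\bigr\|_{\cH^{\otimes(n-1)}}^2,$$
i.e.\ the outer $\nu\times\mu$ integration in the $(\tau,\xi)$-slot simply reconstitutes one further $\cH$-factor, so that one recovers the full norm of the symmetrized kernel with the $(r,z)$-slot missing. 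This is the main conceptual step; beyond it, everything is arithmetic.

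Plugging this identity back, the sum becomes $\sum_{n\ge 2}[n(n-1)]^2(n-2)!\int_{\bR}\|\widetilde{f}_n(\cdot,r,z,t,x)\|_{\cH^{\otimes(n-1)}}^2\,dz$. By \eqref{2-moment-I},
$$n^2(n-1)!\int_{\bR}\|\widetilde{f}_n(\cdot,r,z,t,x)\|_{\cH^{\otimes(n-1)}}^2\,dz\le \pi t\,n^2\,\Gamma_t^{n-1}e^{2Mt}\Bigl(\frac{2}{M}\Bigr)^{n-1}K_M^{n-1},$$
and the elementary identity $[n(n-1)]^2(n-2)!/(n-1)!=n^2(n-1)$ yields
$$\E\|D_r^2u(t,x)\|_{\cH\otimes L^2(\bR)}^2\le \pi T\,e^{2MT}\sum_{n\ge 2}n^2(n-1)\,\Gamma_T^{n-1}\Bigl(\frac{2}{M}\Bigr)^{n-1}K_M^{n-1}.$$
Using $n^2(n-1)\le e^{3n}$ and choosing $M=M_T$ large enough that $e^3\cdot 2\Gamma_T K_M/M<1$ (possible by \eqref{Dalang-cond} and dominated convergence, as in Lemma \ref{sum-alpha-finite}), the series converges, uniformly in $(t,x)\in[0,T]\times\bR$ and $r\in[0,t]$, giving the desired constant $C_T''$. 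The only genuine obstacle is the Fourier-side identification in the third paragraph; once one trusts that $\cH^{\otimes(n-1)}$ and $\cH^{\otimes(n-2)}\otimes\cH$ coincide isometrically through the product structure of $\nu\times\mu$ (which is precisely what Lemma \ref{Fourier-lemma-D2} sets up), the rest reuses estimates already proved.
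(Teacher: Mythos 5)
Your proposal is correct and follows essentially the same route as the paper's proof: expand via the chaos decomposition \eqref{chaos-D2}, use the Fourier identity \eqref{norm-H-Fourier} together with \eqref{two-Fourier2} to identify the $(\tau,\xi)$-integrated norm with $\|\widetilde{f}_n(\cdot,r,z,t,x)\|_{\cH^{\otimes(n-1)}}^2$, and then invoke \eqref{2-moment-I} and the choice of large $M$. The combinatorial identity $[n(n-1)]^2(n-2)!=n^2(n-1)\,(n-1)!$ and the final summation argument match the paper exactly.
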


\begin{proof}
By \eqref{chaos-D2}, we have:
\begin{align*}
& \E \|D^2_r u(t,x)\|^2_{\cH\otimes L^2(\bR)}= \E \int_\bR
\|D_{*,(r,z)}^2u(t,x)\|_\cH^2 dz \\
& \; = \E \int_\bR \int_{\bR^2} |\cF_{\theta,w} \big[
D^2_{*,(r,z)} u(t,x)\big] (\tau,\xi) |^2
\nu(d\tau)\mu(d\xi) dz\\
& \; =\sum_{n\geq 2} n^2(n-1)^2 (n-2)!\int_\bR \int_{\bR^2} \|
\cF_{\theta,w} \widetilde{f}_n(\cdot,*,r,z,t,x)
(\tau,\xi)\big) \|^2_{\cH^{\otimes (n-2)}} \nu(d\tau)\mu(d\xi) dz.
\end{align*}
Using relation \eqref{norm-H-Fourier} for
expressing $\|\cF_{\theta,w} \widetilde{f}_n(\cdot,*,r,z,t,x)
(\tau,\xi)\big) \|^2_{\cH^{\otimes (n-2)}}$, followed by \eqref{two-Fourier2}, we see that
$$\int_{\bR^2} \|
\cF_{\theta,w} \widetilde{f}_n(\cdot,*,r,z,t,x)
(\tau,\xi)\big) \|^2_{\cH^{\otimes (n-2)}} \nu(d\tau)\mu(d\xi)=\|\widetilde{f}_n(\cdot,r,z,t,x)\|_{\cH^{\otimes (n-1)}}^2.$$
Hence,
$$\E \|D^2_r u(t,x)\|^2_{\cH\otimes L^2(\bR)}\leq \sum_{n\geq 2} n^2(n-1)^2 (n-2)!
\int_{\bR}\|\widetilde{f}_n(\cdot,r,z,t,x)\|_{\cH^{\otimes (n-1)}}^2 dz.$$
By \eqref{2-moment-I}, for any $t \in [0,T]$, $x \in \bR$, $r \in [0,t]$ and $M>0$,
$$n^2 (n-1)!\int_{\bR}\|\widetilde{f}_n(\cdot,r,z,t,x))\|_{\cH^{\otimes (n-1)}}^2 dz \leq \pi t n^2 \Gamma_t^{n-1} e^{2M t} \left(\frac{2}{M} \right)^{n-1}K_{M}^{n-1},$$
and hence,
$$\E \|D^2_r u(t,x)\|^2_{\cH\otimes L^2(\bR)}\leq \pi t e^{2M t} \sum_{n\geq 2}n^2 (n-1) \Gamma_t^{n-1} \left(\frac{2}{M} \right)^{n-1}K_{M}^{n-1}.$$

We use the fact that $n \leq e^n$ for all $n \geq 1$.
We conclude that for any $M>0$, $t \in [0,T]$, $x \in \bR$ and $r \in [0,t]$,
$$\E \|D^2_r u(t,x)\|^2_{\cH\otimes L^2(\bR)}\leq \pi T e^{2M T} e^3 \sum_{n\geq 2} \left( e^3 \Gamma_T  \frac{2}{M} K_{M}\right)^{n-1}.$$
Choose $M=M_T'>2$ large enough such that
$e^3 \Gamma_T K_{M_T'} < 1/2$. Then
\begin{equation}
\label{L2-D2-norm}
\E\|D^2_r u(t,x)\|^2_{\cH\otimes L^2(\bR)}\leq \pi T e^{2M_T' T}e^3 \sum_{n\geq 1}\left(\frac{1}{2} \right)^{n-1}.
\end{equation}
The conclusion follows with $C_T''=2\pi T e^{2M_T' T}e^3$.
\end{proof}

\section{Equation satisfied by $D_{r,\cdot}u(t,x)$}
\label{section-eq-D}

In this section, we show that $D_{r,\cdot}u(t,x)$ satisfies an integral equation.

By Theorem 6.7 of \cite{balan12}, we know that Malliavin derivative $Du$ of the solution satisfies the following equation in $L^2(\Omega;\cH)$:
\begin{equation}
\label{Malliavin-eq}
D_{\cdot}u(t,x)=G(t-\cdot,x-\cdot)u(\cdot)+\int_0^t \int_{\bR}U(s,y)W(\delta^* s,\delta^* y)
\end{equation}
where $U=U^{(t,x)} \in L^2(\Omega;\cH \otimes \cH)$ belongs to the domain of the $\delta^*$ and is given by:
$$U((r',z'),(s,y))=G(t-s,x-y) D_{r',z'} u(s,y).$$
Here $\delta^*$ is the Skorohod integral with values in $\cH$.

In what follows we will show that $D_{r,\cdot}u$ satisfies an equation similar to \eqref{Malliavin-eq}, but in $L^2(\Omega;L^2(\bR))$ for fixed $r \in [0,t]$.

 Let $\overline{\delta}$ be the $L^2(\bR)$-valued Skorohod integral defined in Section 6 of \cite{balan12} with $\cA=L^2(\bR)$. By Proposition 6.2 of \cite{balan12}, $\bD^{1,2}(\cH \otimes L^2(\bR)) \subset {\rm Dom} \ \overline{\delta}$.
Let $(u_n)_{n\geq 0}$ be the sequence of Picard iterations given by: $u_0(t,x)=1$ and
$$u_{n}(t,x)=1+\sum_{k=1}^{n}I_k(f_k(\cdot,t,x)). $$

We fix $t>0$, $x \in \bR^d$ and $r \in [0,t]$. For any $s \in [0,t]$, $y \in \bR^d$ and $z \in \bR^d$, we let
\begin{align*}
K^{(r)}\big((s,y),z\big)&=1_{[0,t]}(s)G(t-s,x-y)D_{r,z}u(s,y),\\
K_n^{(r)}\big((s,y),z\big)&=1_{[0,t]}(s)G(t-s,x-y)D_{r,z}u_n(s,y).
\end{align*}

We denote
\begin{equation}
\label{def-phi} \phi(t):=\int_{[0,t]^2}
\int_{\bR^{2}}G(s,y)G(s',y')\gamma(s-s')f(y-y')dy dy' ds
ds'=\alpha_1(t),
\end{equation}
where we recall that $\alpha_1(t)$ has been defined in
\eqref{def-alpha-n}.

The following result is similar to Lemma 6.5 of \cite{balan12}.

\begin{lemma}
\label{K-in-D}
For any $r \in [0,t]$, $K^{(r)} \in \bD^{1,2}(\cH \otimes L^2(\bR))$ and $K_n^{(r)} \in \bD^{1,2}(\cH \otimes L^2(\bR))$. Hence,
$K^{(r)}$ and $K_n^{(r)}$ belong to ${\rm Dom} \ \overline{\delta}$, for any $r \in [0,t]$.
\end{lemma}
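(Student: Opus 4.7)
Since Proposition 6.2 of \cite{balan12} (recalled just before this lemma) gives $\bD^{1,2}(\cH\otimes L^2(\bR))\subset{\rm Dom}\,\overline{\delta}$, it suffices to show $K^{(r)},K_n^{(r)}\in\bD^{1,2}(\cH\otimes L^2(\bR))$. My plan is to follow the scheme of Lemma 6.5 of \cite{balan12}, adapting it to the $L^2(\bR)$-valued setting and using the uniform estimates established in Sections \ref{section-Malliavin} and \ref{section-D2}. The argument splits into (i) controlling the $L^2(\Omega;\cH\otimes L^2(\bR))$-norm of $K^{(r)}$ and (ii) controlling the $L^2(\Omega;\cH^{\otimes 2}\otimes L^2(\bR))$-norm of its Malliavin derivative.

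\emph{Step 1.} Using the bound $\|\cdot\|_\cH\le\|\cdot\|_{|\cH|}$ in the $(s,y)$-variable together with Fubini, I write
\begin{align*}
\E\|K^{(r)}\|^2_{\cH\otimes L^2(\bR)}
&= \E\int_\bR\|K^{(r)}(\cdot,z)\|_\cH^2\,dz \\
&\le \int_{[0,t]^2}\!\int_{\bR^2} G(t-s,x-y)G(t-s',x-y')\gamma(s-s')f(y-y') \\
&\quad \times \E\!\int_\bR |D_{r,z}u(s,y)||D_{r,z}u(s',y')|\,dz\,ds\,dy\,ds'\,dy'.
\end{align*}
Applying Cauchy--Schwarz first in $z$ and then in $\Omega$, the inner expectation is bounded by $(\E\|D_{r,\cdot}u(s,y)\|^2_{L^2(\bR)})^{1/2}(\E\|D_{r,\cdot}u(s',y')\|^2_{L^2(\bR)})^{1/2}$, which Theorem \ref{th-D-in-L2} shows is uniformly dominated by $C_T$. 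Hence $\E\|K^{(r)}\|^2_{\cH\otimes L^2(\bR)}\le C_T\,\phi(t)<\infty$, with $\phi(t)$ as in \eqref{def-phi}.

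\emph{Step 2.} Since $u(s,y)$ lies in every $\bD^{k,p}$ by Lemma \ref{lemma-u-D} and the factor $1_{[0,t]}(s)G(t-s,x-y)$ is deterministic, $K^{(r)}$ is Malliavin differentiable as an $\cH\otimes L^2(\bR)$-valued random variable, with
\[
D_{(\theta,w)}K^{(r)}\bigl((s,y),z\bigr) = 1_{[0,t]}(s)G(t-s,x-y)\,D^2_{(\theta,w),(r,z)}u(s,y).
\]
Repeating the Step 1 estimate with an additional $|\cH|$-bound in the new $(\theta,w)$-variable and invoking Theorem \ref{H-L2-norm} in place of Theorem \ref{th-D-in-L2}, I obtain
\[
\E\|DK^{(r)}\|^2_{\cH^{\otimes 2}\otimes L^2(\bR)} \le C_T''\,\phi(t)<\infty,
\]
so that $K^{(r)}\in \bD^{1,2}(\cH\otimes L^2(\bR))$.

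\emph{Step 3: Picard iterates and the main difficulty.} For each fixed $n$, $u_n$ is a finite Wiener chaos, so $K_n^{(r)}$ is manifestly in $\bD^{1,2}(\cH\otimes L^2(\bR))$; its derivative is given by the same identity with $u$ replaced by $u_n$, and the above bounds (which only used chaos-level estimates that pass to truncations) apply verbatim. The main technical point I expect to have to negotiate is the rigorous justification of the pointwise identity for $DK^{(r)}$ in Step 2 at the level of Hilbert-space-valued random variables: this requires (i) a jointly measurable version of $(s,y,z)\mapsto D_{r,z}u(s,y)$, for which Theorem \ref{D-measurable-th} is a starting point, and (ii) an interchange between Malliavin differentiation and pointwise evaluation of $\cH\otimes L^2(\bR)$-valued random elements. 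The standard way to handle both is to establish the identity and bounds at the level of the iterates $K_n^{(r)}$, where the chaos expansion makes everything explicit, and then pass to the limit $n\to\infty$ using the closedness of $D$ together with the uniform $L^2$-estimates of Sections \ref{section-Malliavin} and \ref{section-D2}, which guarantee $K_n^{(r)}\to K^{(r)}$ in $L^2(\Omega;\cH\otimes L^2(\bR))$ and $DK_n^{(r)}\to DK^{(r)}$ in $L^2(\Omega;\cH^{\otimes 2}\otimes L^2(\bR))$.
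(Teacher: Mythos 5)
Your proposal is correct and follows essentially the same route as the paper: both bound $\|\cdot\|_{\cH\otimes L^2(\bR)}$ by $\|\cdot\|_{|\cH|\otimes L^2(\bR)}$, apply Cauchy--Schwarz twice, and invoke Theorems \ref{th-D-in-L2} and \ref{H-L2-norm} to obtain the bounds $C_t\phi(t)$ and $C_t''\phi(t)$ for $K^{(r)}$ and $DK^{(r)}$ respectively. The additional care you take in Step 3 about justifying the pointwise identity for $DK^{(r)}$ is reasonable but is passed over silently in the paper's own proof, which simply uses that identity implicitly.
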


\begin{proof} To prove that $K^{(r)} \in \bD^{1,2}(\cH \otimes L^2(\bR))$ it suffices to show that
$\|K^{(r)}\|_{\bD^{1,2}(\cH \otimes L^2(\bR))}^2<\infty$, where
$$\|K^{(r)}\|_{\bD^{1,2}(\cH \otimes L^2(\bR))}^2=\E\|K^{(r)}\|_{\cH \otimes L^2(\bR)}^2+\E\|DK^{(r)}\|_{\cH \otimes \cH \otimes L^2(\bR)}^2.$$
It suffices to show that both terms above are finite.
Note that
$\|K^{(r)}\|_{\cH\otimes L^2(\bR)}\leq \|K^{(r)}\|_{|\cH|\otimes
L^2(\bR)}$.
Applying Cauchy-Schwarz inequality twice and Theorem \ref{th-D-in-L2}, we have
\begin{align}
\nonumber
& \E\|K^{(r)}\|_{|\cH|\otimes L^2(\bR)}^2 \\
\nonumber
& \; = \E \int_{[0,t]^2} \int_{\bR^{2}}G(t-s,x-y)G(t-s',x-y')
\big|\langle D_{r,\cdot}u(s,y), D_{r,\cdot}u(s',y')\rangle_{L^2(\bR)}\big|\\
\nonumber
& \qquad \qquad \qquad \times \gamma(s-s')f(y-y')dydy'dsds' \\
\nonumber
& \; \leq \E \int_{[0,t]^2}
\int_{\bR^{2}}G(t-s,x-y)G(t-s',x-y')
\|D_{r,\cdot}u(s,y)\|_{L^2(\bR)}  \|D_{r,\cdot}u(s',y')\|_{L^2(\bR)}\\
\nonumber
& \qquad \qquad \qquad \times \gamma(s-s')f(y-y')dydy'dsds' \\
\nonumber
& \; \leq \int_{[0,t]^2} \int_{\bR^{2}}G(t-s,x-y)G(t-s',x-y')\Big(\E \| D_{r,\cdot}u(s,y)\|_{L^2(\bR)}^2 \Big)^{1/2}  \Big(\E \| D_{r,\cdot}u(s',y')\|_{L^2(\bR)}^2 \Big)^{1/2}\\
\nonumber
& \qquad \qquad \qquad \times \gamma(s-s')f(y-y')dydy'dsds' \\
\label{bound-K-r}
& \; \leq C_t \phi(t)<\infty.
\end{align}
A similar calculation shows that $E\|K_n^{(r)}\|_{|\cH| \otimes
L^2(\bR)}^2<C_t \phi(t)$.
For the second term, we proceed similarly. First, note that
$\|DK^{(r)}\|_{\cH \otimes \cH\otimes L^2(\bR)}\leq \|DK^{(r)}\|_{|\cH|\otimes \cH \otimes
L^2(\bR)}$.
Applying Cauchy-Schwarz inequality twice and Theorem \ref{H-L2-norm}, we have
\begin{align}
\nonumber
& \E \|DK^{(r)}\|_{|\cH|\otimes \cH \otimes L^2(\bR)}^2 \\
\nonumber
& \; = \E \int_{[0,t]^2} \int_{\bR^{2}}G(t-s,x-y)G(t-s',x-y')
\big|\langle D_{r}^2 u(s,y), D_{r}^2 u(s',y')\rangle_{\cH \otimes L^2(\bR)}\big|\\
\nonumber
& \qquad \qquad \qquad \times \gamma(s-s')f(y-y')dydy'dsds' \\
\nonumber
& \; \leq \E \int_{[0,t]^2}
\int_{\bR^{2}}G(t-s,x-y)G(t-s',x-y')
\|D_{r}^2 u(s,y)\|_{\cH \otimes L^2(\bR)}  \|D_{r}^2 u(s',y')\|_{\cH \otimes L^2(\bR)}\\
\nonumber
& \qquad \qquad \qquad \times \gamma(s-s')f(y-y')dydy'dsds' \\
\nonumber
& \; \leq \int_{[0,t]^2} \int_{\bR^{2}}G(t-s,x-y)G(t-s',x-y')
\Big(\E \| D_{r}^2 u(s,y)\|_{\cH \otimes L^2(\bR)}^2 \Big)^{1/2}\\
\nonumber
& \qquad \qquad \times \Big(\E \| D_{r}^2 u(s',y')\|_{\cH \otimes L^2(\bR)}^2 \Big)^{1/2} \gamma(s-s')f(y-y')dydy'dsds' \\
\label{bound-DK-r} & \; \leq C_t'' \phi(t)<\infty.
\end{align}
A similar calculation shows that $\E\|DK_n^{(r)}\|_{|\cH| \otimes \cH \otimes L^2(\bR)}^2<\infty$.
\end{proof}

We use the following more suggestive notation:
$$\overline{\delta}(K^{(r)})=\int_0^t \int_{\bR}G(t-s,x-y)D_{r,\cdot}u(s,y)W(\overline{\delta} s, \overline{\delta}y)$$
$$\overline{\delta}(K_{n}^{(r)})=\int_0^t \int_{\bR}G(t-s,x-y)D_{r,\cdot}u_{n}(s,y)W(\overline{\delta} s, \overline{\delta}y).$$
By Lemma \ref{K-in-D}, these stochastic integrals are well-defined.

\medskip

The following result establishes a recursive relation involving the
Malliavin derivative of the Picard iteration scheme (see Proposition
6.6 of \cite{balan12} for a similar result).

\begin{proposition}
\label{analogue-prop6-6}
For any $r \in [0,t]$, the following equality holds in $L^2(\Omega;L^2(\bR))$:
\begin{equation}
\label{eq:9}
D_{r,\cdot}u_n(t,x)=G(t-r,x-\cdot)u_{n-1}(r,\cdot)+\int_0^t \int_{\bR}G(t-s,x-y)D_{r,\cdot}u_{n-1}(s,y)W(\overline{\delta} s, \overline{\delta}y).
\end{equation}
\end{proposition}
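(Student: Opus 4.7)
My plan is to follow the strategy of Proposition 6.6 of \cite{balan12}, adapting it to the $L^2(\bR)$-valued setting. The proof rests on two ingredients: (i) a Picard-type recursive identity for $u_n$ involving the ordinary Skorohod integral $\delta$, and (ii) the standard commutation rule between $D$ and $\delta$, applied in the $L^2(\bR)$-valued framework.

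First, I would establish the recursion
\[
u_n(t,x) = 1 + \int_0^t \int_{\bR} G(t-s,x-y) u_{n-1}(s,y) W(\delta s, \delta y)
\]
by matching chaos expansions. For each $k=0,\ldots,n-1$, the integrand at chaos level $k$ is the deterministic-kernel product $G(t-s,x-y) I_k(f_k(\cdot,s,y))$. Writing this as $I_k(\varphi_k(\cdot,s,y))$ with $\varphi_k(t_1,x_1,\ldots,t_k,x_k,s,y) = G(t-s,x-y) f_k(t_1,x_1,\ldots,t_k,x_k,s,y)$, a direct inspection reveals $\varphi_k = f_{k+1}(\cdot,t,x)$ with $(s,y)$ playing the role of the $(k+1)$-th coordinate. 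Hence the Skorohod integral at this level equals $I_{k+1}(\widetilde{\varphi}_k) = I_{k+1}(f_{k+1}(\cdot,t,x))$, and summing over $k$ gives $u_n(t,x)-1$ as required.

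Next, I would apply the commutation relation $D_{r,z}\delta(v) = v(r,z) + \delta(D_{r,z} v)$ pointwise in $(r,z)$. Taking $v(s,y) = G(t-s,x-y) u_{n-1}(s,y)$, which by Lemma \ref{lemma-u-D} belongs to $\bD^{k,p}(\cH)$ for all $k,p$, yields
\[
D_{r,z}u_n(t,x) = G(t-r,x-z) u_{n-1}(r,z) + \delta\bigl(G(t-\cdot,x-\cdot) D_{r,z} u_{n-1}(\cdot)\bigr),
\]
where the Malliavin derivative of the deterministic factor $G(t-s,x-y)$ vanishes.

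The main technical point is to upgrade this identity, which a priori holds pointwise in $z$ (and in $L^2(\Omega)$), to an equality in $L^2(\Omega; L^2(\bR))$ with the Skorohod integral interpreted as the $L^2(\bR)$-valued integral $\overline{\delta}$. Here I would use Lemma \ref{K-in-D}, which guarantees that $K_{n-1}^{(r)} \in \bD^{1,2}(\cH \otimes L^2(\bR)) \subset \mathrm{Dom}\,\overline{\delta}$, so the right-hand side is a well-defined element of $L^2(\Omega; L^2(\bR))$. Since for each fixed $z$ the section $\overline{\delta}(K_{n-1}^{(r)})(z)$ coincides with the scalar Skorohod integral $\delta(K_{n-1}^{(r)}(\cdot,z))$ (this is the defining property of $\overline{\delta}$), the pointwise identity obtained from the commutation rule extends to the claimed $L^2(\bR)$-valued identity.

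I expect the main obstacle to be keeping track of the regularity needed to legitimately apply both the scalar commutation rule pointwise in $z$ and then patch these into the $L^2(\bR)$-valued statement. This requires checking that $G(t-\cdot,x-\cdot) u_{n-1}(\cdot) \in \bD^{1,2}(\cH)$ with iterated derivative in $L^2(\Omega; \cH \otimes L^2(\bR))$, which is essentially the content of Lemma \ref{K-in-D} and the moment bounds from Theorems \ref{th-D-in-L2} and \ref{H-L2-norm}. Once this is in place, the argument closes exactly as in Proposition 6.6 of \cite{balan12}.
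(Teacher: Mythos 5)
Your overall architecture matches the paper's: well-definedness of the stochastic integral via Lemma \ref{K-in-D} together with Theorems \ref{th-D-in-L2} and \ref{H-L2-norm}, then an identity for the integrand, then passage to the $L^2(\bR)$-valued statement by duality. But the central step — ``apply the commutation relation $D_{r,z}\delta(v)=v(r,z)+\delta(D_{r,z}v)$ pointwise in $(r,z)$'' — is a genuine gap. The standard commutation rule is an identity in $L^2(\Omega;\cH)$, and $\cH$ is a completion with respect to the covariance $\gamma(t-s)f(x-y)$: its elements are distributions, equality in $\cH$ does not determine values on the slice $\{r\}\times\bR$, and in particular an $\cH$-valued identity carries no information at a \emph{fixed} time $r$. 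Extracting the fixed-$r$, $L^2(\bR)$-valued identity is precisely the content of the proposition, and it is exactly what the paper proves by hand in its Step 2: it writes $D_{r,z}u_n(t,x)=\sum_k A_k(r,z,t,x)$ using the explicit (everywhere-defined) kernels $f_j^{(n)}(\cdot,r,z,t,x)$, regroups the terms carrying the factor $G(t-r,x-z)$ to produce $G(t-r,x-\cdot)u_{n-1}(r,\cdot)$, and identifies the remaining iterated integrals, after a change of labels, with $\int_r^t G(t-s,x-y)D_{r,z}u_{n-1}(s,y)W(\delta s,\delta y)$. Your ``direct inspection'' and ``commutation rule'' compress this combinatorial verification into an assertion; for the finite-chaos Picard iterates it can indeed be carried out, but it must be carried out, not invoked.

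A second, smaller inaccuracy: it is not the ``defining property of $\overline{\delta}$'' that its sections at fixed $z$ are scalar Skorohod integrals. The defining property is the duality $\E\langle F,\overline{\delta}(K)\rangle_{L^2(\bR)}=\E\langle DF,K\rangle_{\cH\otimes L^2(\bR)}$, and what it yields upon testing with $F=F_0\varphi$ is $\langle\varphi,\overline{\delta}(K)\rangle_{L^2(\bR)}=\delta(\langle\varphi,K\rangle_{L^2(\bR)})$ — the $\varphi$-averaged version, not a pointwise-in-$z$ one. This is why the paper formulates the intermediate identity \eqref{eq:10} as a scalar equation for $\langle\varphi,D_{r,\cdot}u_n(t,x)\rangle_{L^2(\bR)}$ with $\varphi\in L^2(\bR)$ fixed, proves it by the kernel regrouping above, and only then passes to \eqref{eq:9} by density of the variables $F_0\varphi$ in $\bD^{1,2}(L^2(\bR))$. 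If you replace your pointwise claims by this tested formulation, your argument becomes the paper's.
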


\begin{proof} For clarity, we split the proof into several steps.

{\em Step 1.} To prove \eqref{eq:9}, we will show (in {\em Step 2} below) that for
all $\varphi\in L^2(\bR)$ fixed, the following equality holds in $L^2(\Omega)$:
\begin{align}
\langle \varphi,D_{r,\cdot} u_n(t,x)\rangle_{L^2(\bR)} & = \langle
\varphi,G(t-r,x-\cdot) u_{n-1}(r,\cdot)\rangle_{L^2(\bR)} \nonumber \\
& \qquad + \int_0^t \int_{\bR} G(t-s,x-y) \langle \varphi, D_{r,\cdot}
u_{n-1}(s,y)\rangle_{L^2(\bR)} W({\delta} s,{\delta} y). \label{eq:10}
\end{align}

The integral on the right-hand side of \eqref{eq:10} is a
real-valued Skorohod integral. In this step of the proof, we show
that this integral is well-defined. For this, let
$$v_n(s,y)=G(t-s,x-y)\langle \varphi, D_{r,\cdot}
u_{n-1}(s,y)\rangle_{L^2(\bR)}.$$ Since $\bD^{1,2}(\cH) \subset {\rm
Dom} \ \delta$, it is enough to prove that $v_n \in \bD^{1,2}(\cH)$,
i.e. $\E\|v_n\|_{\cH}^2<\infty$ and $\E\|D v_n\|_{\cH \otimes
\cH}^2<\infty$. By Cauchy-Schwarz inequality
and  Theorem \ref{th-D-in-L2}, it follows that
\begin{align*}
\E\|v_n\|_{\cH}^2  & \leq
\E\|v_n\|_{|\cH|}^2 \\
& =\E \int_{([0,t] \times \bR)^2} \gamma(s-s')f(y-y') G(t-s,x-y)G(t-s',x-y')\\
 & \qquad \qquad \quad \times |\langle \varphi, D_{r,\cdot} u_{n-1}(s,y)\rangle_{L^2(\bR)}| \cdot | \langle \varphi, D_{r,\cdot} u_{n-1}(s',y')\rangle_{L^2(\bR)}| dydy' dsds'\\
&
\leq \|\varphi\|_{L^2(\bR)}^2 C_t \phi(t)<\infty.
\end{align*}

We now prove that:
\begin{equation}
\label{D-vn-finite} \E\|D v_n\|_{\cH \otimes \cH}^2<\infty.
\end{equation}
Note that
\begin{align*}
D_{\theta,w}v_n(s,y) & =
 \int_\bR G(t-s,x-y) \varphi(z) D^2_{(\theta,w),(r,z)}
u_{n-1}(s,y) dz.
\end{align*}
Then, we have
\begin{align*}
\E\|D v_n\|_{\cH \otimes \cH}^2 & \leq \E\|D v_n\|_{|\cH| \otimes
\cH}^2\\
& = \E \int_{([0,t] \times \bR)^2} \gamma(s-s')f(y-y')
G(t-s,x-y)G(t-s',x-y') \\
& \quad\times \left|\left\langle \int_\bR \varphi(z)
D^2_{*,(r,z)}u_{n-1}(s,y) dz,\int_\bR \varphi(z)
D^2_{*,(r,z)}u_{n-1}(s',y') dz\right\rangle_\cH \right| ds ds' dy'
dy'.
\end{align*}
By Cauchy-Schwarz inequality, in order to have \eqref{D-vn-finite}
it suffices to check that
\[
\sup_{(s,y)\in [0,t]\times \bR} \E \left\|\int_\bR \varphi(z)
D^2_{*,(r,z)}u_{n-1}(s,y) dz\right\|^2_\cH <\infty.
\]
Observe that
\begin{align*}
& \E \left\|\int_\bR \varphi(z) D^2_{*,(r,z)}u_{n-1}(s,y)
dz\right\|^2_\cH \\
& \qquad = \E \int_{\bR^2} \big|\cF_{\theta,w} \left[\int_\bR
\varphi(z) D^2_{*,(r,z)}u_{n-1}(s,y) dz\right](\tau,\xi) \big|^2
\nu(d\tau)\mu(d\xi).
\end{align*}
One verifies that, with probability 1, for all $r\in [0,t]$ and
$(s,y)\in [0,t]\times \bR$, the function $(\theta,w)\to \int_\bR
\varphi(z) D^2_{(\theta,w),(r,z)}u_{n-1}(s,y) dz$ belongs to
$L^2(\bR^2)$ and
\[
\cF_{\theta,w} \left[\int_\bR \varphi(z) D^2_{*,(r,z)}u_{n-1}(s,y)
dz\right] = \int_\bR
\varphi(z)\cF_{\theta,w}\big[D^2_{*,(r,z)}u_{n-1}(s,y)\big] dz.
\]
Indeed, for the latter equality to be fulfilled, it suffices to show
the equality of the respective inner products against an arbitrary
element of $L^2(\bR^2)$. Thus, by Cauchy-Schwarz inequality and
Fubini theorem,
\begin{align*}
& \E \left\|\int_\bR \varphi(z) D^2_{*,(r,z)}u_{n-1}(s,y)
dz\right\|^2_\cH \\
& \quad  = \E \int_{\bR^2} \Big| \int_\bR \varphi(z) \cF_{\theta,w}
\big[D^2_{*,(r,z)}u_{n-1}(s,y)\big](\tau,\xi) dz \Big|^2
\nu(d\tau)\mu(d\xi)\\
& \quad \leq \|\varphi\|^2_{L^2(\bR)} \E \int_{\bR^2}\int_\bR
\big|\cF_{\theta,w}
\big[D^2_{*,(r,z)}u_{n-1}(s,y)\big](\tau,\xi)\big|^2 dz \nu(d\tau)\mu(d\xi)\\
& \quad = \|\varphi\|^2_{L^2(\bR)} \E \int_\bR
\|D^2_{*,(r,z)}u_{n-1}(s,y)\|^2_\cH dz \\
& \quad \leq \|\varphi\|^2_{L^2(\bR)} \sup_{(t,x)\in [0,t]\times
\bR} \sup_{r\in [0,t]} \E \|D^2_r u_{n-1}(t,x)\|^2_{\cH\otimes
L^2(\bR)},
\end{align*}
and this is finite by Theorem \ref{H-L2-norm}. This concludes the
proof of \eqref{D-vn-finite}.

\medskip

{\em Step 2.} We now give the proof of \eqref{eq:10}. By \eqref{series-D} and \eqref{def-An},
$D_{r,z}u_n(t,x)=\sum_{k=1}^{n}A_k(r,z,t,x)$
with $A_n(r,z,t,x)=\sum_{j=1}^{n}A_{j}^{(n)}(r,z,t,x)$ and
\begin{align*}
A_{j}^{(n)}(r,z,t,x)&=\int_{0<t_1<\ldots<t_{j-1}<r<t_j<\ldots<t_{n-1}<t}
 G(t-t_{n-1},x-x_{n-1})\ldots  G(t_j-r,x_j-z) \\
&G(r-t_{j-1},z-x_{j-1})\ldots G(t_2-t_1,x_2-x_1)W(dt_1,dx_1)\ldots
W(dt_{n-1},dx_{n-1}),
\end{align*}
where the integrals are taken on the set $([0,t]\times \bR)^n$.

We take $n=3$ for simplicity. The general case is similar. We drop $(t,x)$ from the notation. We proceed as in Step 2 in the proof of Proposition 6.6 in \cite{balan12}. We have
\begin{align*}
\langle \varphi, D_{r,\cdot}u_3(t,x) \rangle_{L^2(\bR)} &=\int_{\bR} D_{r,z}u_3(t,x)\varphi(z)dz\\
&= \int_{\bR} \Big(A_1(r,z)+\sum_{j=1}^{2}A_j^{(2)}(r,z)+\sum_{j=1}^{3}A_j^{(3)}(r,z)\Big)\varphi(z)dz.
\end{align*}
In this sum, we put together the terms which have the common factor $G(t-z,x-z)$:
\begin{align*}
A_1(r,z)&=G(t-r,x-z),\\
A_2^{(2)}(r,z)&=G(t-r,x-z) \int_{t_1<r} G(r-t_1,z-x_1)W(dt_1,dx_1)
\end{align*}
and
\begin{align*}
& A_3^{(3)}(r,z)\\
&\quad =G(t-r,x-z) \int_{t_1<t_2<r}
G(r-t_2,z-x_2)G(t_2-t_1,x_2-x_1)W(dt_1,dx_1)W(dt_2,dx_2).
\end{align*}
It follows that for every $z \in \bR$ fixed,
\begin{align*}
A_1(r,z)+A_2^{(2)}(r,z)+A_3^{(3)}(r,z)=G(t-r,x-z)u_2(r,z).
\end{align*}
Multiplying by $\varphi(z)$ and taking the integral $dz$ on $\bR$, we see that
$$\int_{\bR^{2}}\Big(A_1^{(1)}(r,z)+A_2^{(2)}(r,z)+A_3^{(3)}(r,z)\Big) \varphi(z)dz=
\langle \varphi, G(t-r,x-\cdot)u_2(r,\cdot)\rangle_{L^2(\bR)}.$$

For the remaining terms, we perform some changes of notation. If we denote $(t_1,x_1)$ by $(s,y)$, then
\begin{align*}
A_1^{(2)}(r,z)&= \int_{r<t_1<t} G(t-t_1,x-x_1) G(t_1-r,x_1-z) W(dt_1,dx_1)\\
&= \int_{r<s<t} G(t-s,x-y) G(s-r,y-z) W(ds,dy).
\end{align*}

Similarly, if we denote $(t_2,x_2)$ by $(s,y)$, then
\begin{align*}
& A_1^{(3)}(r,z)\\
&\; = \int_{r<t_1<t_2<t} G(t-t_2,x-x_2) G(t_2-t_1,x_2-x_1)G(t_1-r,x_1-z) W(dt_1,dx_1)W(dt_2,dx_2)\\
&\;=\int_{r<t_1<s<t} G(t-s,x-y) G(s-t_1,y-x_1)G(t_1-r,x_1-z) W(dt_1,dx_1)W(ds,dy),\\
& A_2^{(3)}(r,z)\\
&\; = \int_{t_1<r<t_2<t} G(t-t_2,x-x_2) G(t_2-r,x_2-z)G(r-t_1,z-x_1) W(dt_1,dx_1)W(dt_2,dx_2)\\
&\;=\int_{t_1<r<s<t} G(t-s,x-y) G(s-r,y-z)G(r-t_1,z-t_1)
W(dt_1,dx_1)W(ds,dy).
\end{align*}

With this change of notation, $G(t-s,x-y)$ becomes a common factor in the integrals $W(ds,dy)$ above. The remaining factors in these integrals are easily recognized as $A_1(r,z,s,y)$, $A_1^{(2)}(r,z,s,y)$, respectively $A_2^{(2)}(r,z,s,y)$. We obtain that, for any $z \in \bR$ fixed,
\begin{align*}
& A_1^{(2)}(r,z)+A_1^{(3)}(r,z)+A_2^{(3)}(r,z)
\\
&=\int_r^t G(t-s,x-y) \Big(A_1(r,z,s,y)+A_1^{(2)}(r,z,s,y)+A_2^{(2)}(r,z,s,y)\Big) W(ds,dy) \\
&= \int_r^t G(t-s,x-y) D_{r,z}u_2(s,y)W(ds,dy)\\
&= \int_r^t G(t-s,x-y) D_{r,z}u_2(s,y)W(\delta s,\delta y).
\end{align*}
We multiply by $\varphi(z)$ and we take the integral $dz$ on $\bR$. Using stochastic Fubini theorem, we obtain:
\begin{align*}
& \int_{\bR}\Big(A_1^{(2)}(r,z)+A_1^{(3)}(r,z)+A_2^{(3)}(r,z)\Big) \varphi(z)dz\\
& \quad = \int_r^t G(t-s,x-y) \langle \varphi, D_{r,\cdot} u_2(s,y)
\rangle_{L^2(\bR)} W(\delta s,\delta y).
\end{align*}
This concludes the proof of \eqref{eq:10} for $n=3$.

{\em Step 3.} Finally, we give the proof of
\eqref{eq:9}, based on \eqref{eq:10}. By the duality relation characterizing $\overline{\delta}$, we have to prove that for any $F \in \bD^{1,2}(L^2(\bR))$,
$$\E \langle F, D_{r,\cdot} u_n(t,x)- G(t-r,x-\cdot) u_{n-1}(r,\cdot)\rangle_{L^2(\bR)}
= \E \langle DF,K^{(r)}_{n-1}\rangle_{\cH\otimes L^2(\bR)}.$$
Using a density argument, it is enough to show that this holds for
$F=F_0 \varphi$ with $\varphi\in L^2(\bR)$ and $F_0$ a smooth random
variable. By \eqref{eq:10}, and the duality
relation characterizing $\delta$,
\begin{align*}
& \E \langle F, D_{r,\cdot} u_n(t,x)- G(t-r,x-\cdot)
u_{n-1}(r,\cdot)\rangle_{L^2(\bR)} \\
& \quad = \E[F_0 \langle \varphi , D_r u_n(t,x)- G(t-r,x-\cdot)
u_{n-1}(r,\cdot \rangle_{L^2(\bR)} ] \\
& \quad = \E\left[F_0 \delta(v_n) \right]= \E [\langle DF_0,v_n \rangle_\cH ]\\
& \quad = \E \int_{([0,t] \times \bR)^2}  v_n(s,y) (D_{s',y'}F_0) \gamma(s-s')f(y-y') dydy'dsds'\\
& \quad = \E \int_{[0,t]^2} \int_{\bR^2}\int_{\bR}G(t-s,x-y) D_{r,z}u_{n-1}(s,y) \varphi(z)
 (D_{s',y'}F_0) \\
 & \qquad \qquad \qquad \times \gamma(s-s')f(y-y')dyd' dsds' dz \\
& \quad = \E \langle DF,K^{(r)}_{n-1}\rangle_{\cH\otimes L^2(\bR)}.
\end{align*}
For the third last equality we used the fact that $v_n \in |\cH|$ a.s. (since $E\|v_n\|_{|\cH|}^2<\infty$) and we assumed (without loss of generality) that $DF_0 \in |\cH|$, so that the inner product in $\cH$ can be expressed as an integral on $([0,t] \times \bR)^2$, according to Lemma \ref{L1-lemma} (Appendix \ref{appendix-Parseval}).
\end{proof}

In the following theorem, we prove that the Malliavin derivative
$Du(t,x)$ satisfies an equation in the space $L^2(\bR)$.

\begin{theorem}
\label{analogue-th-6-7}
For any $r \in [0,t]$, the following equality holds in $L^2(\Omega;L^2(\bR))$:
\begin{equation}
\label{eq-Malliavin-P}
D_{r,\cdot}u(t,x)=G(t-r,x-\cdot)u(r,\cdot)+\int_0^t \int_{\bR}G(t-s,x-y)D_{r,\cdot}u(s,y)W(\overline{\delta} s, \overline{\delta}y).
\end{equation}
\end{theorem}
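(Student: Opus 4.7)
The plan is to derive \eqref{eq-Malliavin-P} by passing to the limit as $n\to\infty$ in equation \eqref{eq:9} from Proposition \ref{analogue-prop6-6}, with all limits taking place in $L^2(\Omega; L^2(\bR))$. Three terms must be handled separately: the left-hand side $D_{r,\cdot}u_n(t,x)$, the initial term $G(t-r,x-\cdot)u_{n-1}(r,\cdot)$, and the Skorohod integral $\overline{\delta}(K_{n-1}^{(r)})$.

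For the left-hand side, we use that $u_n$ has a chaos expansion truncated at level $n$, so $D_{r,\cdot}u_n(t,x) = \sum_{k=1}^n k\, I_{k-1}(\widetilde{f}_k(\cdot,r,\cdot,t,x))$. The estimate \eqref{2-moment-I} obtained in the proof of Theorem \ref{th-D-in-L2} shows that the full series $\sum_{k\geq 1}k\, I_{k-1}(\widetilde{f}_k(\cdot,r,\cdot,t,x))$ converges absolutely in $L^2(\Omega;L^2(\bR))$, and by orthogonality of the Wiener chaos spaces its limit coincides with $D_{r,\cdot}u(t,x)$ (the chaos expansion of which appears in \eqref{series-D}). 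Hence $D_{r,\cdot}u_n(t,x)\to D_{r,\cdot}u(t,x)$ in $L^2(\Omega;L^2(\bR))$. For the initial term, since $G(t-r,x-\cdot)$ is bounded by $1/2$ and supported on $|x-z|<t-r$, one has
\[
\E\|G(t-r,x-\cdot)(u(r,\cdot)-u_{n-1}(r,\cdot))\|_{L^2(\bR)}^2 \leq \tfrac{t-r}{4}\sup_{z\in\bR}\E|u(r,z)-u_{n-1}(r,z)|^2.
\]
Since $\E|u(r,z)-u_{n-1}(r,z)|^2=\sum_{k\geq n}\frac{1}{k!}\alpha_k(r)$ is independent of $z$ and is the tail of the convergent series in Lemma \ref{sum-alpha-finite}, this quantity tends to zero.

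The main obstacle is the convergence of the Skorohod integral term, which requires a stronger mode of convergence than $L^2(\Omega;\cH\otimes L^2(\bR))$. Exploiting the continuity of $\overline{\delta}:\bD^{1,2}(\cH\otimes L^2(\bR))\to L^2(\Omega;L^2(\bR))$ (which follows from Proposition 6.2 of \cite{balan12}), it suffices to show that $K_{n-1}^{(r)}\to K^{(r)}$ in $\bD^{1,2}(\cH\otimes L^2(\bR))$, namely
\[
\E\|K^{(r)}-K_{n-1}^{(r)}\|_{\cH\otimes L^2(\bR)}^2\to 0 \quad \text{and}\quad \E\|D(K^{(r)}-K_{n-1}^{(r)})\|_{\cH\otimes\cH\otimes L^2(\bR)}^2\to 0.
\]
Imitating the computation in \eqref{bound-K-r}, the first expectation is bounded above by $\phi(t)\cdot\sup_{(s,y)\in [0,t]\times \bR}\E\|D_{r,\cdot}(u(s,y)-u_{n-1}(s,y))\|_{L^2(\bR)}^2$; the supremum tends to zero as $n\to\infty$ by applying the estimate \eqref{2-moment-I} to the tail $\sum_{k\geq n}$ of the chaos series and choosing $M=M_T$ large enough so that $2e^2\Gamma_T K_M/M<1$. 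The second expectation is handled analogously by mimicking the bound \eqref{bound-DK-r} and invoking Theorem \ref{H-L2-norm}, combined with the fact that $D_r^2 u_{n-1}(s,y)\to D_r^2 u(s,y)$ uniformly in $(s,y)\in[0,t]\times\bR$ in $L^2(\Omega;\cH\otimes L^2(\bR))$ (which again follows from truncating the chaos series for $D_r^2 u$ derived in \eqref{chaos-D2} and using the bound \eqref{bpund-Bn-L2}).

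Combining the three convergences and passing to the limit in \eqref{eq:9} of Proposition \ref{analogue-prop6-6} yields \eqref{eq-Malliavin-P}. The technical heart of the argument is the $\bD^{1,2}$-convergence of $K_{n-1}^{(r)}$, since only in that norm is the divergence operator continuous; the verification relies on the uniform-in-$(s,y)$ tail estimates for the Malliavin derivatives of order one and two that are provided by Theorems \ref{th-D-in-L2} and \ref{H-L2-norm}, respectively.
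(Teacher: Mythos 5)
Your proposal is correct and follows essentially the same route as the paper: pass to the limit in Proposition \ref{analogue-prop6-6}, controlling the Skorohod term via Proposition 6.2 of \cite{balan12} together with the tail estimates of Theorems \ref{th-D-in-L2} and \ref{H-L2-norm} applied to $K^{(r)}-K_{n-1}^{(r)}$, and the other two terms via the chaos tail bounds and the uniform $L^2(\Omega)$-convergence of $u_{n-1}$ to $u$. The paper merely phrases the limit passage through the duality pairing against $F\in\bD^{1,2}(L^2(\bR))$ rather than as direct convergence in $L^2(\Omega;L^2(\bR))$, which amounts to the same estimates (your constant $\tfrac{t-r}{4}$ in the initial-term bound should be $\tfrac{t-r}{2}$, but this is immaterial).
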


\begin{proof} Recall that the stochastic integral on the right-hand side of \eqref{eq-Malliavin-P} is $\overline{\delta}(K^{(r)})$. By duality, it suffices to prove that for any $F \in \bD^{1,2}(L^2(\bR))$,
\begin{equation}
\label{analogue-80}
\E\langle D_{r,\cdot}u(t,x)-G(t-r,x-\cdot)u(r,\cdot), F\rangle_{L^2(\bR)}=\E \langle DF, K^{(r)}\rangle_{\cH \otimes L^2(\bR)}.
\end{equation}
By Proposition \ref{analogue-prop6-6}, we know that for any $F \in \bD^{1,2}(L^2(\bR))$,
\begin{equation}
\label{analogue-81}
\E\langle D_{r,\cdot}u_n(t,x)-G(t-r,x-\cdot)u_{n-1}(r,\cdot), F\rangle_{L^2(\bR)}=\E \langle DF, K_{n-1}^{(r)}\rangle_{\cH \otimes L^2(\bR)}.
\end{equation}
Relation \eqref{analogue-80} is obtained by taking $n \to \infty$ in
\eqref{analogue-81}. We justify this below.

On the right-hand side, using duality and Cauchy-Schwarz inequality,
we have:
\begin{align*}
\E \langle DF, K_{n-1}^{(r)}-K^{(r)}\rangle_{\cH \otimes L^2(\bR)} &
=\E \langle \overline{\delta}(K_{n-1}^{(r)}-K^{(r)}),F \rangle_{L^2(\bR)} \\
& \leq
\big(\E\|\overline{\delta}(K_{n-1}^{(r)}-K^{(r)})\|_{L^2(\bR)}^2\big)^{1/2}
\big(\E\|F\|_{L^2(\bR)}^2 \big)^{1/2}.
\end{align*}
By applying Proposition 6.2 of \cite{balan12} with ${\cal A}=L^2(\bR)$,
$$\E\|\overline{\delta}(K_{n-1}^{(r)}-K^{(r)})\|_{L^2(\bR)}^2 \leq \E \|K_{n-1}^{(r)}-K^{(r)}\|_{\cH \otimes L^2(\bR)}^2+
\E\|DK_{n-1}^{(r)}-DK^{(r)}\|_{\cH \otimes \cH \otimes L^2(\bR)}^2.$$

Repeating the same calculations as in the proofs of Theorems \ref{th-D-in-L2} and \ref{H-L2-norm} for the series $u(t,x)-u_{n-1}(t,x)=\sum_{k\geq n}I_k(f_k(\cdot,t,x))$, we arrive at the following estimates (similar to \eqref{L2-D-norm} and \eqref{L2-D2-norm} with $t=T$): for any $r \in [0,t]$,
\begin{align}
\label{bound-D-n}
\E\|D_{r,\cdot}u(t,x)-D_{r,\cdot} u_{n-1}(t,x)\|_{L^2(\bR)}^2  &
\leq \pi t e^{2M_t t}e^2 \sum_{k \geq n}\left(\frac{1}{2} \right)^{k-1}
=\frac{1}{2}C_t a_n, \\
\nonumber
\E \|D_r^2 u(t,x)-D_r^2 u_{n-1}(t,x)\|_{\cH \otimes L^2(\bR)}^2 & \leq \pi t e^{2M_t' t} e^3 \sum_{k\geq n}\left( \frac{1}{2}\right)^{k-1}=\frac{1}{2}C_t'' a_n,
\end{align}
where $a_n=\sum_{k\geq n}\left(\frac{1}{2} \right)^{k-1}$.
Using these bounds, and proceeding as in the proof of Lemma 3.8 for relations \eqref{bound-K-r} and \eqref{bound-DK-r}, we obtain that:
$$\E\|K_{n-1}^{(r)}-K^{(r)}\|_{\cH \otimes L^2(\bR)}^2 \leq \frac{1}{2}C_t a_n \phi(t) \to 0 \quad \mbox{as} \quad n \to \infty$$
and
$$\E\|DK_{n-1}^{(r)}-DK^{(r)}\|_{\cH \otimes \cH \otimes L^2(\bR)}^2 \leq \frac{1}{2}C_t'' a_n \phi(t) \to 0 \quad \mbox{as} \quad n \to \infty.$$

We now treat the left-hand side of \eqref{analogue-81}, which is a difference of two terms. For the first term, by the Cauchy-Schwarz inequality,
$$\E\langle D_{r,\cdot} u_n(t,x)-D_{r,\cdot}u(t,x),F\rangle_{L^2(\bR)} \leq \Big(\E \|D_{r,\cdot} u_n(t,x)-D_{r,\cdot}u(t,x) \|_{L^2(\bR)}^2 \Big)^{1/2} \Big( \E\|F\|_{L^2(\bR)}^2 \Big)^{1/2},$$
which converges to $0$ as $n\to \infty$, by \eqref{bound-D-n}.
For the second term, we use again Cauchy-Schwarz inequality, and we observe that
\begin{align*}
& \E\|G(t-r,x-\cdot)(u_{n-1}(r,\cdot)-u(r,\cdot))\|_{L^2(\bR)}^2 =\int_{\bR}G^2(t-r,x-z)
\E|u_{n-1}(r,z)-u(r,z)|^2dz
\end{align*}
which converges to $0$ as $n \to \infty$. This follows by the dominated convergence theorem, using the fact that $\int_\bR G^2(t-r,x-z)dz<\infty$ and
$$\sup_{(s,y) \in [0,t] \times \bR^d}\E|u_n(s,y)-u(s,y)|^2 \to 0,$$
which was shown in the proof of Theorem 7.1 of \cite{balan-song17}.
\end{proof}

\begin{remark}
\label{remark-eq-M}
{\rm
In what follows, we will use the following important consequence of Theorem  \ref{analogue-th-6-7}. If we denote by $B(r,\cdot)$ the right-hand side of \eqref{eq-Malliavin-P}, then obviously
$$\E\int_0^t \|D_{r,\cdot}u(t,x)-B(r,\cdot)\|_{L^2(\bR)}^2dr=0.$$
Hence, there exists a measurable set $N \subset \Omega \times [0,t]$ with $(P \times \lambda) (N)=0$ (where $\lambda$ is the Lebesgue measure on $\bR$), such that for all
$(\omega,r) \in (\Omega \times [0,t]) \verb2\2 N$
\begin{equation}
\label{D-equality}
\|D_{r,\cdot}u(t,x)(\omega)\|_{L^2(\bR)}=\|B(r,\cdot)(\omega)\|_{L^2(\bR)}.
\end{equation}
}
\end{remark}



\section{Proof of Theorem \ref{thm:density}}
\label{sec:density}

 In this section, we give the proof of Theorem \ref{thm:density},
which is based on Corollary \ref{new-cor} and Lemma \ref{lem:1} below.


We begin with a general result about absolute continuity of the law of a random variable, which corresponds to the remark right after the proof of Theorem 2.1.1 in \cite{nualart06}. We include the proof for the sake
of completeness.

\begin{lemma}
\label{lem:9}
If $F$ is a random variable such that $F\in \bD^{2,p}$ for some
$p>1$, then the measure $(\|DF\|^2_\cH \cdot \bP)\circ F^{-1}$ is
absolutely continuous with respect to the Lebesgue measure on $\bR$. In particular, for any Borel set $B \subset \bR$ with Lebesgue measure zero,
\begin{equation}
\label{P-zero} \bP(F \in B, \|DF \|_{\cH}>0)=0.
\end{equation}
\end{lemma}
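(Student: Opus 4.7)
My plan is to establish the absolute continuity by controlling the finite Borel measure $\mu$ on $\bR$ defined by $\mu(B) := \E[1_B(F)\|DF\|_{\cH}^2]$, so that $\mu = (\|DF\|_{\cH}^2 \cdot \bP) \circ F^{-1}$. The key ingredients are the Malliavin chain rule and the duality formula $\E\langle DG, v\rangle_{\cH} = \E[G\,\delta(v)]$. Since $F \in \bD^{2,p}$ for some $p > 1$, we have $DF \in \bD^{1,p}(\cH) \subset \mathrm{Dom}\,\delta$ by the Meyer inequalities, and $\delta(DF) \in L^p(\Omega) \subset L^1(\Omega)$; set $C := \E|\delta(DF)| < \infty$. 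This is the only place where the assumption $F \in \bD^{2,p}$ (rather than merely $F \in \bD^{1,2}$) is really needed.

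The core identity is the following: for any $\psi \in C_b^1(\bR)$, the chain rule gives $\psi(F) \in \bD^{1,2}$ with $D\psi(F) = \psi'(F)DF$, and then duality yields
$$
\E\bigl[\psi'(F)\|DF\|_{\cH}^2\bigr] = \E\bigl[\langle D\psi(F),DF\rangle_{\cH}\bigr] = \E\bigl[\psi(F)\,\delta(DF)\bigr].
$$
For a nonnegative $g \in C_c(\bR)$, I would set $\psi(x) := \int_{-\infty}^x g(y)\,dy$, which is bounded by $\|g\|_{L^1(\bR)}$ and Lipschitz but not $C^1$. I circumvent this via smooth mollification: let $g_n := g * \rho_n$ and $\psi_n(x) := \int_{-\infty}^x g_n(y)\,dy$. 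Then $\psi_n \in C_b^1(\bR)$, $\|\psi_n\|_\infty \leq \|g\|_{L^1(\bR)}$, and $g_n \to g$, $\psi_n \to \psi$ uniformly. Applying the identity to $\psi_n$ and passing to the limit via dominated convergence (using $\|DF\|_{\cH}^2, \delta(DF) \in L^1$ as dominating ingredients) gives $\E[g(F)\|DF\|_{\cH}^2] \leq C \|g\|_{L^1(\bR)}$.

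To promote this bound from $C_c$-functions to arbitrary Borel sets, I approximate any open $U \subset \bR$ from below by an increasing sequence of nonnegative continuous compactly supported functions and apply monotone convergence on both sides to get $\mu(U) \leq C\lambda(U)$, where $\lambda$ denotes Lebesgue measure. Since $\mu$ is a finite Borel measure on $\bR$, it is outer regular, so $\mu(B) \leq C\lambda(B)$ for every Borel $B$, which establishes $\mu \ll \lambda$. The ``in particular'' statement is then immediate: if $\lambda(B) = 0$, then $\E[1_B(F)\|DF\|_{\cH}^2] = 0$ forces $\|DF\|_{\cH} = 0$ a.s.\ on $\{F \in B\}$, i.e., $\bP(F \in B, \|DF\|_{\cH} > 0) = 0$. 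The main technical subtlety is handling the Lipschitz (non-$C^1$) function $\psi$, which is resolved by the smooth approximation step above.
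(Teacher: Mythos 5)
Your proposal is correct and follows essentially the same route as the paper: both hinge on the chain rule $D(\psi(F))=\psi'(F)DF$ together with the duality/adjoint bound for $\delta(DF)$ (available because $F\in\bD^{2,p}$ gives $DF\in\mathrm{Dom}\,\delta$), yielding $|\E[\psi'(F)\|DF\|_\cH^2]|\leq C\|\psi\|_\infty$. The only difference is that the paper then invokes Lemma 2.1.1 of Nualart's book to conclude, whereas you reprove that measure-theoretic step inline via the antiderivative–mollification–outer-regularity argument, which is precisely the standard proof of that lemma.
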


\begin{proof}
Let $\varphi\in C^\infty_b(\bR)$ be arbitrary. By the chain rule for
the Malliavin derivative (see Proposition 1.2.3 of
\cite{nualart06}), $\varphi(F)\in \bD^{1,2}$ and
$D(\varphi(F))=\varphi'(F) DF$. Taking the scalar product with $DF$,
we obtain that
\[
\langle D(\varphi(F)),DF\rangle_\cH = \varphi'(F) \|DF\|^2_\cH.
\]
Note that $F\in \bD^{2,p}$ implies that $DF\in \bD^{1,p}(\cH)$.
(Recall that $\bD^{1,p}(\cH)$ is the completion of the space
$\cS(\cH)$ of ``smooth'' $\cH$-valued random variables $F$ with
respect to the norm $\|F\|_{\bD^{1,p}(\cH)}=\big(\E\|F\|_{\cH}^p+
\E\|DF\|_{\cH \otimes \cH}^p\big)^{1/p}$.) Since $\bD^{1,2}(\cH)
\subset  {\rm Dom } \, \delta$, it follows that
 $DF\in {\rm Dom } \, \delta$. By definition \eqref{def-delta} of
${\rm Dom} \, \delta$, we have
\begin{align*}
\big|\E \big(\|DF\|^2_\cH \varphi'(F)\big)\big| & = \big| \E \big(
\langle D\varphi(F),DF\rangle_\cH\big)\big| \leq c \big(\E |\varphi(F)|^2\big)^\frac12
\leq c \|\varphi\|_\infty.
\end{align*}
We have thus proved that, for all $\varphi\in C^\infty_b(\bR)$,
\[
\left| \int_\bR \varphi'(x) \big((\|DF\|^2_\cH \cdot \bP)\circ
F^{-1}\big) (dx)\right| \leq c \|\varphi\|_\infty.
\]

By Lemma 2.1.1 of \cite{nualart06}, the measure $\mu=(\|DF\|_{\cH}^2 \cdot P) \circ F^{-1}$ is absolutely continuous with respect to the Lebesgue measure.

To prove the last statement, let $B \subset \bR$ be a Borel set with Lebesgue measure zero. Then
$\int_{F^{-1}(B)} \|DF\|_{\cH}^2dP=\mu(B)=0$,
which implies \eqref{P-zero}.
\end{proof}

We continue with a result about existence of density
of a truncated random variable. First note that if $\Gamma$ is a
Borel set in $\bR$ such that $0 \not \in \Gamma$, then the law of
the truncated variable $G=F 1_{\{F \in \Gamma\}}$ is a probability
measure $\bP_G$ on $\Gamma \cup \{0\}$ given by $\bP_G(A)=\bP(F \in
A)$ for any Borel set $A \subset \Gamma$, and
$\bP_G(\{0\})=\bP(G=0)=\bP(F \not\in \Gamma)$.

\begin{lemma}
\label{lemma-Gamma}
Let $\Gamma$ be a Borel set in $\bR$ such that $0 \not \in \Gamma$. Let $F \in \bD^{2,p}$ for some $p>1$ be such that
\begin{equation}
\label{Gamma-set}
\|DF\|_\cH >0 \quad \text{a.s. on} \quad \{F\in \Gamma\}.
\end{equation}
Then, the restriction of the law of the variable $G=F 1_{\{F \in \Gamma\}}$ to the set $\Gamma$ is absolutely continuous with respect to the Lebesgue measure on $\Gamma$.
\end{lemma}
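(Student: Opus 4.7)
The plan is to reduce the statement directly to Lemma \ref{lem:9}, using the assumption that $0 \notin \Gamma$ to identify the event $\{G \in B\}$ with $\{F \in B\}$ whenever $B \subset \Gamma$.

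More precisely, let $B \subset \Gamma$ be an arbitrary Borel set with Lebesgue measure zero. The goal is to show $\bP(G \in B) = 0$. First I would observe that since $B \subset \Gamma$ and $0 \notin \Gamma$, the point $0$ does not belong to $B$, so
\[
\{G \in B\} = \{F 1_{\{F \in \Gamma\}} \in B\} = \{F \in \Gamma\} \cap \{F \in B\} = \{F \in B\},
\]
where the last equality uses $B \subset \Gamma$. Next, by the hypothesis \eqref{Gamma-set}, $\|DF\|_\cH > 0$ almost surely on $\{F \in \Gamma\}$, and in particular almost surely on the smaller event $\{F \in B\}$. Therefore
\[
\bP(G \in B) = \bP(F \in B) = \bP\bigl(F \in B, \, \|DF\|_\cH > 0\bigr).
\]

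Finally, since $F \in \bD^{2,p}$, I would apply Lemma \ref{lem:9} (specifically relation \eqref{P-zero}) to the Borel set $B \subset \bR$ of Lebesgue measure zero. This yields $\bP(F \in B, \|DF\|_\cH > 0) = 0$, and consequently $\bP(G \in B) = 0$. Since $B$ was an arbitrary Lebesgue-null Borel subset of $\Gamma$, the restriction of the law of $G$ to $\Gamma$ is absolutely continuous with respect to Lebesgue measure on $\Gamma$.

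There is no real obstacle: the content is entirely bookkeeping once Lemma \ref{lem:9} is in hand. The only subtlety to be careful about is making sure the event-equality $\{G \in B\} = \{F \in B\}$ is justified by both inclusions $B \subset \Gamma$ and $0 \notin \Gamma$; without the latter, the value $0$ taken by $G$ on the complement $\{F \notin \Gamma\}$ could contribute extra mass, which is precisely what the hypothesis $0 \notin \Gamma$ rules out.
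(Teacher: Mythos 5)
Your proof is correct and follows essentially the same route as the paper: both arguments reduce the claim to relation \eqref{P-zero} of Lemma \ref{lem:9} after using hypothesis \eqref{Gamma-set} to discard the part of the event where $\|DF\|_{\cH}=0$. The only cosmetic difference is that you make the identification $\{G\in B\}=\{F\in B\}$ for $B\subset\Gamma$ explicit, while the paper works with a general null set $B$ and splits $\bP(F\in B, F\in\Gamma)$ according to whether $\|DF\|_{\cH}$ is positive or zero; the two are equivalent.
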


\begin{proof}
Let $B \subset \bR$ be a Borel set with zero Lebesgue measure. By \eqref{Gamma-set}, we have
\begin{equation}
\label{P-zero-2}
\bP(F\in \Gamma, \|DF\|_{\cH}=0)=0.
\end{equation}
By \eqref{P-zero} and \eqref{P-zero-2}, we obtain:
\begin{align*}
\bP(F\in B, F\in \Gamma) & = \bP(F\in B, F\in \Gamma,
\|DF\|_{\cH}>0)+\bP(F\in B, F\in \Gamma,
\|DF\|_{\cH}=0)=0.
\end{align*}

\end{proof}

\begin{lemma}
\label{lem:1}
 Let $(\Gamma_m)_{m\geq 1}$ be a sequence of open sets in $\bR$ such that $0 \not \in \Gamma_m$ and
 $\Gamma_m\subset \Gamma_{m+1}$, for all $m\geq 1$. Let $\Gamma=\cup_{m\geq 1}\Gamma_m$.
 Let $F\in \bD^{2,p}$ for some $p>1$ be such that, for all $m\geq 1$,
$$\|DF\|_\cH >0 \quad \text{a.s. on} \quad \{F\in \Gamma_m\}.$$
Then, the restriction of the law of the variable $F 1_{\{F \in \Gamma\}}$ to the set $\Gamma$ is absolutely continuous with respect to the Lebesgue measure on $\Gamma$.
\end{lemma}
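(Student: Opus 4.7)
The plan is to reduce Lemma \ref{lem:1} to Lemma \ref{lemma-Gamma} by an exhaustion argument, using the fact that $\Gamma$ is the increasing union of the $\Gamma_m$. Let me elaborate.

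First, I would fix $m\geq 1$ and verify that the hypotheses of Lemma \ref{lemma-Gamma} hold for $\Gamma_m$: indeed, $\Gamma_m$ is a Borel set with $0\notin\Gamma_m$, we have $F \in \bD^{2,p}$, and by assumption $\|DF\|_\cH>0$ a.s. on $\{F\in\Gamma_m\}$. Applying Lemma \ref{lemma-Gamma}, the restriction of the law of $F 1_{\{F\in \Gamma_m\}}$ to $\Gamma_m$ is absolutely continuous with respect to the Lebesgue measure on $\Gamma_m$. Since $0\notin \Gamma_m$, this is equivalent to saying that for every Borel set $A\subset \Gamma_m$ with zero Lebesgue measure, $\bP(F\in A)=0$.

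Next, let $B\subset \Gamma$ be an arbitrary Borel set with zero Lebesgue measure, and set $B_m := B\cap \Gamma_m$. Each $B_m$ is a Borel subset of $\Gamma_m$ of zero Lebesgue measure, so by the previous step $\bP(F \in B_m)=0$ for every $m\geq 1$. Since $\Gamma_m \subset \Gamma_{m+1}$ and $\Gamma = \bigcup_{m\geq 1}\Gamma_m$, the sequence $(B_m)_{m\geq 1}$ is increasing with $\bigcup_{m\geq 1} B_m = B$. By continuity of the probability measure from below,
\[
\bP(F\in B) \;=\; \lim_{m\to\infty}\bP(F\in B_m) \;=\; 0.
\]

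Finally, denoting $G := F 1_{\{F\in\Gamma\}}$ and noting that $\{G\in A\}=\{F\in A\}$ for every Borel set $A\subset \Gamma$ (since $0\notin \Gamma$ forces the event $\{F\notin\Gamma\}$ to contribute $G=0 \notin A$), we conclude that $\bP(G\in B)=\bP(F\in B)=0$ for every Borel $B\subset \Gamma$ of zero Lebesgue measure. This is exactly absolute continuity of the restriction of the law of $G$ to $\Gamma$ with respect to the Lebesgue measure on $\Gamma$, so the lemma is proved. There is no serious obstacle here: the only subtle point is the bookkeeping with the indicator and the fact that the hypothesis $\|DF\|_\cH>0$ is given on each $\{F\in\Gamma_m\}$ separately rather than on $\{F\in\Gamma\}$, but the exhaustion $\Gamma=\bigcup_m\Gamma_m$ together with countable additivity handles this cleanly.
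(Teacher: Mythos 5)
Your proposal is correct and follows essentially the same route as the paper: apply Lemma \ref{lemma-Gamma} to each $\Gamma_m$ and then pass to the limit $m\to\infty$ via continuity from below of the probability measure. The extra bookkeeping you include (the sets $B_m=B\cap\Gamma_m$ and the identification $\{G\in A\}=\{F\in A\}$ for $A\subset\Gamma$) is exactly what the paper's terser argument leaves implicit.
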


\begin{proof}
Let $B \subset \bR$ be a Borel set with zero Lebesgue
measure. 
By Lemma \ref{lemma-Gamma},
$\bP(F \in B,F \in \Gamma_m)=0$ for all $m \geq 1$. By taking $m \to \infty$, we infer that $\bP(F \in
B,F \in \Gamma)=0$.
\end{proof}

In the proof of Theorem \ref{thm:density}, we will use the following notation:
\begin{equation}
\psi(t) :=  \int_0^t \int_\bR G(r,z)^2 dzdr
 = \frac14 \int_0^t \int_\bR 1_{\{|z|<r\}} dz dr
 = \frac{1}{4}t^2
 \label{eq:66}
\end{equation}
and
$$\psi_0(t):=
\int_0^t \int_{\bR^2} G(r,z)G(r,z') f(z-z')dz dz'=\int_0^t \int_\bR |\cF G(r,\cdot)(\xi)|^2
\mu(d\xi)dr.$$
By relation (A.3) in \cite{qs1}, for any $t \in (0,1)$,
\begin{equation}
\psi_0(t) \leq c_0 t,
 \label{eq:67}
\end{equation}
where $c_0=\frac{4}{3}\int_{\bR}\frac{1}{1+|\xi|^2}\mu(d\xi)$.

Recall that $\phi(t)$ is defined by \eqref{def-phi} and $\Gamma_t=2\int_0^t \gamma(s)ds$.
We will use the following lemma.

\begin{lemma}
\label{phi-psi-lem} For any $t>0$,
$$\phi(t) \leq \Gamma_t
\psi_0(t).$$
\end{lemma}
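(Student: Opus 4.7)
The plan is to decouple the two time variables by a Cauchy--Schwarz inequality in the spatial direction, then dominate the resulting double time integral using the symmetry and nonnegativity of $\gamma$. Writing
\[
A(s) := \int_{\bR^2} G(s,y)G(s,y') f(y-y')\,dy\,dy' = \int_\bR |\cF G(s,\cdot)(\xi)|^2\mu(d\xi),
\]
one can view the quantity $\int_{\bR^2} G(s,y)G(s',y')f(y-y')\,dy\,dy'$ as the inner product of $G(s,\cdot)$ and $G(s',\cdot)$ with respect to the nonnegative-definite kernel $f$ (equivalently, the $L^2(\mu)$ inner product of their Fourier transforms). Applying Cauchy--Schwarz in this inner product gives
\[
\int_{\bR^2} G(s,y)G(s',y') f(y-y')\,dy\,dy' \le A(s)^{1/2} A(s')^{1/2}.
\]

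Next I would substitute this bound into the definition of $\phi(t)$, yielding
\[
\phi(t) \le \int_0^t\!\!\int_0^t \gamma(s-s') A(s)^{1/2} A(s')^{1/2} \, ds\,ds'.
\]
Then I would apply the elementary inequality $A(s)^{1/2}A(s')^{1/2} \le \tfrac12\bigl(A(s)+A(s')\bigr)$ together with the symmetry of $\gamma$ (which makes the two resulting integrals identical after relabeling) to obtain
\[
\phi(t) \le \int_0^t A(s)\left(\int_0^t \gamma(s-s')\,ds'\right)ds.
\]

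Finally, for any $s \in [0,t]$ the change of variable $u=s-s'$ and the symmetry and nonnegativity of $\gamma$ give
\[
\int_0^t \gamma(s-s')\,ds' = \int_{s-t}^{s} \gamma(u)\,du \le \int_{-t}^{t} \gamma(u)\,du = 2\int_0^t \gamma(u)\,du = \Gamma_t,
\]
so that
\[
\phi(t) \le \Gamma_t \int_0^t A(s)\,ds = \Gamma_t\,\psi_0(t),
\]
as required. There is no real obstacle here; the only slightly subtle point is the Cauchy--Schwarz step, but since $f$ is the Fourier transform of the nonnegative tempered measure $\mu$ and $G(s,\cdot)$ is a nonnegative integrable function of ``finite energy'' with respect to $f$ (see Lemma \ref{max-principle}), this is standard and can alternatively be written directly in the Fourier side.
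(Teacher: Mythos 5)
Your proof is correct and follows essentially the same route as the paper: Cauchy--Schwarz in the $\langle\cdot,\cdot\rangle_0$ inner product, the inequality $ab \leq \tfrac12(a^2+b^2)$ with the symmetry of $\gamma$, and then the bound $\int_0^t \gamma(s-s')\,ds' \leq \Gamma_t$. The only difference is that the paper delegates this last step to Lemma 4.3 of \cite{balan-song17} (with $n=1$), whereas you carry out the change of variables explicitly.
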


\begin{proof}
By Cauchy-Schwarz inequality and the inequality $ab \leq
\frac{1}{2}(a^2+b^2)$, $$\langle G(s,\cdot),G(s',\cdot)\rangle _0
\leq \|G(s,\cdot)\|_0 \|G(s',\cdot)\|_0 \leq
\frac{1}{2}\Big(\|G(s,\cdot)\|_0^2 +\|G(s',\cdot)\|_0^2 \Big).$$ By
symmetry, it follows that
$$\phi(t)=\int_0^t \int_0^t \gamma(s-s') \langle G(s,\cdot),G(s',\cdot) \rangle_0 dsds'\leq \int_0^t \int_0^t \gamma(s-s') \|G(s,\cdot)\|_0^2  dsds'.$$
The last integral is bounded by $\Gamma_t \psi_0(t)$ by Lemma
4.3 of \cite{balan-song17} (with $n=1$).
\end{proof}

\bigskip

{\bf Proof of Theorem \ref{thm:density}}: We will apply Lemma 4.3 
in the case $F=u(t,x)$ and $\Gamma_m=\{v\in \bR; |v|>\frac1m\}$. For any $m\geq 1$, let $\Omega_m:=\{|u(t,x)|>\frac1m\} \cap \widetilde{\Omega}$, where $\widetilde{\Omega}$ is an event of probability $1$ which will be defined below. By Lemma 3.3 
$u(t,x) \in \bD^{2,p}$ for any $p>1$.
We will prove that
$$\|D u(t,x)\|_{\cH}>0 \quad \mbox{a.s. on} \ \Omega_m.$$
In view of Corollary \ref{new-cor} (applied to $S=Du(t,x)(\omega)$ for fixed $\omega$), it is enough to prove that
\begin{equation}
\label{integral-positive}
\int_{0}^t \int_{\bR} |D_{r,z} u(t,x)|^2 dz dr >0  \quad \mbox{a.s. on} \ \Omega_m.
\end{equation}
By Remark 3.7 
the map $(r,z) \mapsto D_{r,z}u(t,x)(\omega)$ is measurable on $\bR_{+} \times \bR$, for any $\omega \in \Omega$.

We use Remark \ref{remark-eq-M}. By Fubini's theorem, $(\bP \times \lambda)(N)=\int_{\Omega}\lambda(N_{\omega})\bP(d\omega)$, where
$N_{\omega}:=\{r \in [0,t]; (\omega,r) \in N \}$ is the section of $N$ at point $\omega \in \Omega$.
Since $(\bP \times \lambda)(N)=0$, $\lambda(N_{\omega})=0$ for $\bP$-almost all $\omega \in \Omega$. Say this happens on the event $\widetilde{\Omega}$ of probability $1$.

Let $A=\big(\Omega \times [0,t] \big) \verb2\2 N$. Note that the section of $A$ at $\omega \in \Omega$ is the set
$$A_{\omega}:=\{r \in [0,t]; (\omega,r) \in A \}=\{r \in [0,t]; (\omega,r) \not \in N\}=[0,t]- N_{\omega}.$$

Fix $\omega \in \widetilde{\Omega}$. For the remaining part of the proof, we use an argument similar to the proof of Theorem 5.2 of \cite{NQ07} (for the white noise in time).

For any  $r \in A_{\omega}$, equality \eqref{D-equality} holds.
Using the inequality $\|a+b\|_{L^2(\bR)}^2 \geq \frac{1}{2}\|a\|_{L^2(\bR)}^2-\|b\|_{L^2(\bR)}^2$, we obtain:
\begin{align*}
& \|D_{r \cdot}u(t,x)\|_{L^2(\bR)}^2= \left\|G(t-r,x-\cdot)u(r,\cdot)+\int_0^t \int_{\bR}G(t-s,x-y)D_{r\cdot } u(s,y)W(\overline{\delta} s, \overline{\delta}y) \right\|_{L^2(\bR)}^2\\
& \quad \quad \geq \frac{1}{2}\|G(t-r,x-\cdot)u(r,\cdot)\|_{L^2(\bR)}^2-\left\|\int_0^t \int_{\bR}G(t-s,x-y)D_{r\cdot} u(s,y)
W(\overline{\delta} s, \overline{\delta}y) \right\|_{L^2(\bR)}^2.
\end{align*}

Let $\delta\in (0,1)$ be arbitrary. Taking the integral with respect to $r$ on $A_{\omega}$ (and using the fact that $\lambda(N_{\omega})=0$), we obtain that on $\widetilde{\Omega}$,
\begin{align}
 \int_{0}^t \|D_{r \cdot} u(t,x)\|^2_{L^2(\bR)}\, dr & \geq \int_{t-\delta}^t \|D_{r \cdot} u(t,x)\|^2_{L^2(\bR)}\, dr \nonumber \\
 & \geq  \frac{1}{2} \int_{t-\delta}^t \|G(t-r,x-\cdot) u(r,\cdot)\|^2_{L^2(\bR)} \, dr - I(\delta),
 \label{eq:33}
\end{align} where
\begin{align*}
 I(\delta) & = \int_{t-\delta}^t \left\| \int_{0}^t \int_{\bR} G(t-s,x-y) D_{r \cdot} u(s,y)
 W(\overline{\delta} s, \overline{\delta} y) \right\|^2_{L^2(\bR)} \, dr\\
 & = \int_{t-\delta}^t \left\| \int_{t-\delta}^t \int_{\bR} G(t-s,x-y) D_{r \cdot} u(s,y)
 W(\overline{\delta} s, \overline{\delta} y)\right\|^2_{L^2(\bR)} \, dr.
\end{align*}
The second equality is due to the fact that $D_{r \cdot}u(s,y)=0$ if $r>s$, and so we must have $s \geq r \geq t-\delta$.

On the event $\Omega_m$,
\begin{align*}
\int_{t-\delta}^t \int_\bR |G(t-r,x-z) u(r,z)|^2 dz dr & \geq
\frac{1}{m^2} \int_{t-\delta}^t \int_\bR G(t-r,x-z)^2 dz dr -
J(\delta) \\
& = \frac{1}{m^2} \psi(\delta) - J(\delta) =\frac{1}{4m^2} \delta^2 - J(\delta),
\end{align*}
where
\begin{align*}
 J(\delta) & = \int_{t-\delta}^t \int_\bR G^2(t-r,x-z) \big( u(t,x)^2
 - u(r,z)^2\big) dz dr.
\end{align*}
So, by \eqref{eq:33}, we have that
\begin{equation}
\label{LB-D}
\int_0^t \int_\bR |D_{r,z}u(t,x)|^2 dz dr \geq \frac{1}{8 m^2}\delta^2 - \frac12
J(\delta) - I(\delta).
\end{equation}

Let us now estimate the first moment of $J(\delta)$ and
$I(\delta)$, respectively. To start with, we have
\[
\E[|J(\delta)|] \leq\int_{t-\delta}^t \int_\bR G^2(t-r,x-z)
\E \big[| u(t,x)^2
 - u(r,z)^2|\big] dz dr,
\]
and we have
\begin{align*}
\E \big[| u(t,x)^2 - u(r,z)^2|\big] & = \E \big[|u(t,x)+u(r,z)|
\times |u(t,x)-u(r,z)| \big] \\
& \leq \left(\E \big[|u(t,x)+u(r,z)|^2\big]\right)^\frac12 \left(\E
\big[|u(t,x)-u(r,z)|^2\big]\right)^\frac12\\
& \leq 2 \sup_{(s,y)\in [0,t]\times \bR} \left(\E
\big[|u(s,y)|^2\big]\right)^\frac12  \left(\E
\big[|u(t,x)-u(r,z)|^2\big]\right)^\frac12 \\
& =2C_t^* \left(\E \big[|u(t,x)-u(r,z)|^2\big]\right)^\frac12,
\end{align*}
using the fact that $C_t^*:=\sup_{(s,y) \in [0,t] \times \bR}\E[|u(s,y)|^2]<\infty$, by Theorem 7.1 of \cite{balan-song17}.
Thus,
\begin{align}
\nonumber
\E[|J(\delta)|] & \leq 2 C_t^* \int_{t-\delta}^t \int_\bR G^2(t-r,x-z)
\left(\E \big[|u(t,x)-u(r,z)|^2\big]\right)^\frac12 dz dr \\
\nonumber
& \leq 2 C_t^* \int_{t-\delta}^t \int_\bR G^2(t-r,x-z) \sup_{t-\delta<s<t}
\sup_{|x-y|<t-s} \left(\E \big[|u(t,x)-u(s,y)|^2\big]\right)^\frac12
dzdr\\
\nonumber
& \leq 2 C_t^* g_{t,x}(\delta)\int_{t-\delta}^t \int_\bR G^2(t-r,x-z) dz
dr\\
\label{estimate-J}
& = \frac{1}{2}C_t^* g_{t,x}(\delta) \, \delta^2,
\end{align}
with
\[
g_{t,x}(\delta):= \sup_{|t-s|<\delta} \sup_{|x-y|<\delta} \left(\E
\big[|u(t,x)-u(s,y)|^2\big]\right)^\frac12.
\]
Note that $\lim_{\delta \to 0}g_{t,x}(\delta) =0$ because $u$ is
continuous in $L^2(\Omega)$, by Theorem 7.1 of \cite{balan-song17}.

Now we proceed to bound $E[I(\delta)]$. By Fubini's theorem,
$$\E[I(\delta)] =
 \int_{t-\delta}^t \E \left\| \int_{t-\delta}^t \int_{\bR} G(t-s,x-y) D_{r,\cdot} u(s,y)
 W(\overline{\delta} s, \overline{\delta} y)\right\|^2_{L^2(\bR)} \, dr.$$

Note that the integrand of the divergence operator $\overline{\delta}$ appearing above is the $L^2(\bR)$-valued process $K_{\delta}^{(r)}$ defined by $$K_{\delta}^{(r)}((s,y),\cdot)=1_{[t-\delta,t]}(s)G(t-s,x-y)D_{r,\cdot}u(s,y).$$
This process is similar to $K^{(r)}$, but contains the indicator of $[t-\delta,t]$ instead of $1_{[0,t]}$. Similarly to Lemma \ref{K-in-D}, it can be proved that $K_{\delta}^{(r)} \in {\rm Dom} \ \overline{\delta}$.

At this point, we apply Proposition 6.2 of \cite{balan12} (with ${\cal A}=L^2(\bR)$) to get that:
$$E\|\overline{\delta}(K_{\delta}^{(r)})\|^2 \leq \E\|K_{\delta}^{(r)}\|_{\cH \otimes L^2(\bR)}^2+\E\|DK_{\delta}^{(r)}\|_{\cH \otimes L^2(\bR) \otimes L^2(\bR)}^2.$$
Hence
$$\E[I(\delta)] \leq I_1(\delta)+I_2(\delta)$$
where
$$I_1(\delta)=\int_{t-\delta}^t \E\|K_{\delta}^{(r)}\|_{\cH \otimes L^2(\bR)}^2dr \quad \mbox{and} \quad I_2(\delta)=\int_{t-\delta}^t \E\|DK_{\delta}^{(r)}\|_{\cH \otimes L^2(\bR) \otimes L^2(\bR)}^2dr.$$

Similarly to \eqref{bound-K-r} and \eqref{bound-DK-r}, using Theorems \ref{th-D-in-L2} and \ref{H-L2-norm}, we obtain:
\begin{align*}
& \E\|K_{\delta}^{(r)}\|_{|\cH|\otimes L^2(\bR)}^2 \\
& \; \leq \int_{[t-\delta,t]^2} \int_{\bR^{2}}G(t-s,x-y)G(t-s',x-y')
\Big(\E \| D_{r,\cdot}u(s,y)\|_{L^2(\bR)}^2 \Big)^{1/2}  \\
& \qquad \qquad \times \Big(\E \| D_{r,\cdot}u(s',y')\|_{L^2(\bR)}^2
\Big)^{1/2}
 \gamma(s-s')f(y-y')dydy'dsds' \\
& \; \leq C_t \phi(\delta),
\end{align*}
and
\begin{align*}
& \E \|DK_{\delta}^{(r)}\|_{|\cH|\otimes \cH \otimes L^2(\bR)}^2 \\
\nonumber
& \; \leq \int_{[t-\delta,t]^2} \int_{\bR^{2}}G(t-s,x-y)G(t-s',x-y')
\Big(\E \| D_{r}^2 u(s,y)\|_{\cH \otimes L^2(\bR)}^2 \Big)^{1/2} \nonumber \\
& \qquad \qquad \times  \Big(\E \| D_{r}^2 u(s',y')\|_{\cH \otimes
L^2(\bR)}^2 \Big)^{1/2}
\gamma(s-s')f(y-y')dydy'dsds' \\
& \; \leq C_t'' \phi(\delta).
\end{align*}
Here $D_r^2 u(t,x)$ denotes $D_{*,(r,\cdot)}^2$, where $*$ is the
missing $(\theta,w)$-variable and $\cdot$ is the missing
$z$-variable.
By Lemma \ref{phi-psi-lem} and relation \eqref{eq:67}, $\phi(\delta) \leq \Gamma_{\delta}
\psi(\delta)\leq c_0 \Gamma_{\delta} \delta$, and hence
$$I_1(\delta) \leq c_0 C_t \delta^2  \Gamma_{\delta} \quad \mbox{and} \quad I_2(\delta) \leq c_0 C_t'' \delta^2 \Gamma_{\delta} .$$

We obtain:
\begin{equation}
\label{estimate-I}
\E[I(\delta)]\leq c_0(C_t+C_t'') \delta^2 \Gamma_\delta,
\end{equation}

Taking into account the later estimate and the one obtained for $E[|J(\delta)|]$ (see \eqref{estimate-J}), we will be able to conclude the proof, as follows. Using \eqref{LB-D} and Markov's inequality, for any $n\geq 1$, we have
\begin{align*}
& \bP\left( \left\{\int_{0}^t \int_\bR |D_{r,z} u(t,x)|^2 \, dz dr <\frac1n\right\} \cap
 \Omega_m \right) \leq
 \bP\left(I(\delta) + \frac{1}{2} J(\delta) > \frac{1}{8m^2}\delta^2 - \frac1n \right)\\
 & \quad \leq \left( \frac{1}{8m^2} \delta^2 - \frac1n\right)^{-1} \Big(\E[I(\delta)] + \frac{1}{2} \E[|J(\delta)|]\Big)\\
 & \quad \leq \left( \frac{1}{8m^2} \delta^2 - \frac1n\right)^{-1} \delta^2
 \Big(c_0(C_t+C_t'') \Gamma_\delta + \frac{1}{4}C_t^* g_{t,x}(\delta)\Big).
\end{align*}
Taking $n\rightarrow \infty$, one gets
\[
 \bP\left( \left\{\int_{0}^t \int_\bR |D_{r,z} u(t,x)|^2 \,dz dr =0\right\} \cap \Omega_m \right)\leq
 \left(\frac{1}{8m^2} \delta^2 \right)^{-1}
  \delta^2 \Big(c_0(C_t+C_t'') \Gamma_\delta + \frac{1}{4}C_t^* g_{t,x}(\delta)\Big).
\]
Next, we take $\delta\rightarrow 0$. Since $\lim_{\delta \to 0}\Gamma_{\delta}= 0$ and $\lim_{\delta \to 0}g_{t,x}(\delta)=0$, we obtain:
$$\bP\left( \left\{\int_{0}^t \int_\bR |D_{r,z} u(t,x)|^2 \, dzdr =0\right\} \cap \Omega_m \right)=0.$$
This concludes the proof of \eqref{integral-positive} and the proof of Theorem \ref{thm:density}.


\appendix

\section{A Parseval-type identity}
\label{appendix-Parseval}

In this section, we give the Parseval-type identity which is used in
the proof of {Lemma \ref{max-principle}. We begin by recalling a
remarkable result of \cite{KX09}, and comment on a small correction
of a related result from the same paper.

Let $f:\bR^d \to [0,\infty]$ be a kernel of {\em positive type}, i.e. $f$ is locally integrable and its Fourier transform in $\cS'(\bR^d)$ is a function $g$ which is non-negative almost everywhere. In addition, we suppose that $f$ is continuous, symmetric and $f(x)<\infty$ if and only if $x\not=0$.

By Lemma 5.6 of \cite{KX09}, for any Borel probability measure $\mu$ on $\bR^d$, we have:
$$\int_{\bR^d}\int_{\bR^d}f(x-y)\mu(dx) \mu(dy)=\frac{1}{(2\pi)^d}\int_{\bR^d}|\cF \mu(\xi)|^2 g(\xi)d\xi=:\cE_f(\mu).$$

In particular, if $\mu(dx)=\varphi(x)dx$ where $\varphi$ is a density function on $\bR^d$,
\begin{equation}
\label{identity1}
\int_{\bR^d}\int_{\bR^d}f(x-y)\varphi(x) \varphi(y)dx dy=\frac{1}{(2\pi)^d}\int_{\bR^d}|\cF \varphi(\xi)|^2 g(\xi)d\xi=:\cE_f(\varphi).
\end{equation}

It follows that relation \eqref{identity1} holds for any {\em non-negative} function $\varphi \in L^1(\bR^d)$.

Relation (5.37) of \cite{KX09} says that
\begin{equation}
\label{KX1}
\int_{\bR^d}\int_{\bR^d}f(x-y)\mu(dx) \nu(dy)=\frac{1}{(2\pi)^d}\int_{\bR^d}\cF \mu(\xi) \overline{\cF \nu(\xi)} g(\xi)d\xi
\end{equation}
for any Borel probability measures $\mu$ and $\nu$ on $\bR^d$. When $\nu=\delta_0$ and $\mu=\delta_x$ for $x \in \bR$ arbitrary, this relation becomes
$$f(x)=\frac{1}{(2\pi)^d}\int_{\bR^d}e^{-i \xi \cdot x}g(\xi)d\xi, \quad x \in \bR,$$
which is not true if $g$ is not integrable. The problem is caused by the fact that in the proof of (5.37), on the right hand side of (5.39), we may have $\infty-\infty$.

But \eqref{KX1} {\em does hold} for any Borel probability measures $\mu$ and $\nu$ on $\bR^d$ such that $\cE_f(\mu)<\infty$ and $\cE_f(\nu)<\infty$.
In particular, 
\begin{equation}
\label{identity2}
\int_{\bR^d}\int_{\bR^d}f(x-y)\varphi(x) \psi(y)dx dy=\frac{1}{(2\pi)^d}\int_{\bR^d}\cF \varphi(\xi) \overline{\cF \psi(\xi)} g(\xi)d\xi=:\cE_f(\varphi,\psi)
\end{equation}
for any density functions $\varphi$ and $\psi$ on $\bR^d$ with $\cE_f(\varphi)<\infty$ and $\cE_f(\psi)<\infty$.

It follows that relation \eqref{identity2} holds for any {\em non-negative} functions $\varphi,\psi \in L^1(\bR^d)$ with $\cE_f(\varphi)<\infty$ and $\cE_f(\psi)<\infty$. (To see this, we write \eqref{identity2} for the density functions $\varphi/\|\varphi\|_1$ and  $\psi/\|\psi\|_1$ and then we multiply by $\|\varphi\|_1 \|\psi\|_1$.) Moreover, in this case $|\cE_f(\varphi,\psi)|<\infty$ since by the Cauchy-Schwarz inequality
$$\left|\int_{\bR^d}\cF \varphi(\xi) \overline{\cF \psi(\xi)} g(\xi)d\xi \right|\leq \left( \int_{\bR^d}|\cF \varphi(\xi)|^2 g(\xi)d\xi \right)^{1/2} \left( \int_{\bR^d}|\cF \psi(\xi)|^2 g(\xi)d\xi \right)^{1/2}$$

In fact, we have the following more general result.

\begin{lemma}
\label{L1-lemma}
Relation \eqref{identity2} holds for any functions $\varphi,\psi \in L^1(\bR^d)$ with $\cE_f(|\varphi|)<\infty$ and $\cE_f(|\psi|)<\infty$, and in this case $|\cE_f(\varphi,\psi)|<\infty$.
\end{lemma}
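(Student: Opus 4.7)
The plan is to reduce \eqref{identity2} to the non-negative case already established in the text, by writing $\varphi$ and $\psi$ as differences of non-negative $L^1$ functions with finite energy and then exploiting bilinearity. Throughout the argument, $\varphi$ and $\psi$ are understood to be real-valued, matching the bilinear form of both sides of \eqref{identity2}.

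The preliminary step would be to establish absolute convergence of both sides under the hypotheses $\cE_f(|\varphi|), \cE_f(|\psi|) < \infty$. Since $|f(x-y)\varphi(x)\psi(y)| \le f(x-y)|\varphi(x)||\psi(y)|$, applying the non-negative version of \eqref{identity2} to $|\varphi|$ and $|\psi|$ and then the Cauchy--Schwarz inequality for the measure $g(\xi)\,d\xi$ on the Fourier side yields
\[
\int_{\bR^d}\int_{\bR^d} f(x-y)|\varphi(x)||\psi(y)|\,dx\,dy \;\le\; \sqrt{\cE_f(|\varphi|)\,\cE_f(|\psi|)} < \infty.
\]
This simultaneously settles absolute convergence of the left-hand side of \eqref{identity2} and delivers the final claim $|\cE_f(\varphi,\psi)| < \infty$.

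Next I would split $\varphi = \varphi^+ - \varphi^-$ and $\psi = \psi^+ - \psi^-$ into positive and negative parts. Each of these four pieces lies in $L^1(\bR^d)$, and the pointwise bounds $0 \le \varphi^{\pm} \le |\varphi|$ and $0 \le \psi^{\pm} \le |\psi|$, together with $f \ge 0$, yield $\cE_f(\varphi^{\pm}) \le \cE_f(|\varphi|)$ and $\cE_f(\psi^{\pm}) \le \cE_f(|\psi|)$. Hence the non-negative case of \eqref{identity2} applies to each of the four pairs $(\varphi^a,\psi^b)$ with $a,b \in \{+,-\}$, producing
\[
\int_{\bR^d}\int_{\bR^d} f(x-y)\varphi^a(x)\psi^b(y)\,dx\,dy \;=\; \frac{1}{(2\pi)^d}\int_{\bR^d}\cF\varphi^a(\xi)\,\overline{\cF\psi^b(\xi)}\,g(\xi)\,d\xi,
\]
with all eight integrals finite. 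A Cauchy--Schwarz estimate on the Fourier side, analogous to that of the first step, also shows that each quantity $\int|\cF\varphi^a\,\overline{\cF\psi^b}|\,g\,d\xi$ is finite.

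To conclude, I would sum the four identities with signs $\varepsilon_+ = +1$, $\varepsilon_- = -1$. Bilinearity on the left-hand side and the linear decompositions $\cF\varphi = \cF\varphi^+ - \cF\varphi^-$, $\overline{\cF\psi} = \overline{\cF\psi^+} - \overline{\cF\psi^-}$ on the right-hand side — both valid because every integral in sight is absolutely convergent — produce \eqref{identity2}. No step is a real obstacle; the only subtle point is to ensure absolute convergence of each integral before rearranging, and this is precisely the role of the Cauchy--Schwarz estimate carried out at the outset.
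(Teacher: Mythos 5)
Your proposal is correct and follows essentially the same route as the paper: decompose $\varphi$ and $\psi$ into positive and negative parts, observe that each part inherits finite energy from $|\varphi|$ and $|\psi|$, apply the already-established non-negative case to the four pairs, and combine by bilinearity, with the final bound $|\cE_f(\varphi,\psi)|\leq \cE_f(|\varphi|)^{1/2}\cE_f(|\psi|)^{1/2}$ coming from Cauchy--Schwarz. The only difference is cosmetic — you front-load the absolute-convergence check that the paper performs at the end.
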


\noindent {\bf Proof:} We write $\varphi=\varphi^{+}-\varphi^{-}$,
where $\varphi^{+}=\max(\varphi,0)$ and
$\varphi^{-}=\max(-\varphi,0)$. Similarly, $\psi=\psi^{+}-\psi^{-}$,
where $\psi^{+}=\max(\psi,0)$ and $\psi^{-}=\max(-\psi,0)$. Then
$|\varphi|=\varphi^{+}+\varphi^{-}$. Since $\varphi^{+}(x)\leq
|\varphi(x)|$ for any $x \in \bR$, we have
\begin{align*}
\cE_{f}(\varphi^{+}) &
=\int_{\bR^d}\int_{\bR^d}f(x-y)\varphi^{+}(x)\varphi^{+}(y)dxdy
\\
& \leq \int_{\bR^d}\int_{\bR^d}f(x-y)|\varphi(x)| |\varphi(y)|dxdy
=\cE_{f}(|\varphi|)<\infty.
\end{align*}
 Similarly, we obtain that
$\cE_{f}(\varphi^{-})<\infty$, $\cE_{f}(\psi^{+})<\infty$ and
$\cE_{f}(\psi^{-})<\infty$.

Hence relation \eqref{identity2} holds for the pairs $(\varphi^+,\psi^+)$, $(\varphi^-,\psi^-)$, $(\varphi^+,\psi^-)$ and $(\varphi^-,\psi^+)$:
\begin{eqnarray*}
\int_{\bR^d}\int_{\bR^d}f(x-y)\varphi^+(x) \psi^+(y)dx dy&=&\frac{1}{(2\pi)^d}\int_{\bR^d}\cF \varphi^+(\xi) \overline{\cF \psi^+(\xi)} g(\xi)d\xi\\
\int_{\bR^d}\int_{\bR^d}f(x-y)\varphi^-(x) \psi^-(y)dx dy&=&\frac{1}{(2\pi)^d}\int_{\bR^d}\cF \varphi^-(\xi) \overline{\cF \psi^-(\xi)} g(\xi)d\xi \\
\int_{\bR^d}\int_{\bR^d}f(x-y)\varphi^+(x) \psi^-(y)dx dy&=&\frac{1}{(2\pi)^d}\int_{\bR^d}\cF \varphi^+(\xi) \overline{\cF \psi^-(\xi)} g(\xi)d\xi \\
\int_{\bR^d}\int_{\bR^d}f(x-y)\varphi^-(x) \psi^+(y)dx dy&=&\frac{1}{(2\pi)^d}\int_{\bR^d}\cF \varphi^-(\xi) \overline{\cF \psi^+(\xi)} g(\xi)d\xi
\end{eqnarray*}
and all the integrals appearing above are finite. We take the sum of the first two relations above and from this, we subtract the sum of the last two. We obtain
$$\int_{\bR^d}\int_{\bR^d}f(x-y)(\varphi^+(x)-\varphi^-(x))(\psi^+(y)-\psi^-(y))dx dy=$$
$$\frac{1}{(2\pi)^d}\int_{\bR^d}(\cF \varphi^+(\xi) -\cF \varphi^{-}(\xi))\overline{(\cF \psi^+(\xi)-\cF \psi^-(\xi))} g(\xi)d\xi,$$
which is exactly relation \eqref{identity2}. Finally, we note that
$$|\cE_f(\varphi,\psi)|\leq \int_{\bR^d} \int_{\bR^d} f(x-y)|\varphi(x)||\psi(y)|dxdy =\cE_f(|\varphi|,|\psi|)\leq \cE_f(|\varphi|)^{1/2}\cE_f(|\psi|)^{1/2}<\infty. $$
$\Box$

\vspace{3mm}

For complex-valued functions, we have the following result.

\begin{lemma}
\label{L1C-lemma}
For any functions $\varphi,\psi \in L_{\bC}^1(\bR^d)$ with $\cE_f(|\varphi|)<\infty$ and $\cE_f(|\psi|)<\infty$, we have:
\begin{equation}
\label{identity3}
\int_{\bR^d}\int_{\bR^d}f(x-y)\varphi(x) \overline{\psi(y)}dx dy=\frac{1}{(2\pi)^d}\int_{\bR^d}\cF \varphi(\xi) \overline{\cF \psi(\xi)} g(\xi)d\xi=:\cE_f(\varphi,\psi),
\end{equation}
and in this case, $|\cE_f(\varphi,\psi)|<\infty$.
\end{lemma}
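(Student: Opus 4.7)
The plan is to reduce Lemma \ref{L1C-lemma} to Lemma \ref{L1-lemma} by decomposing the complex-valued functions into their real and imaginary parts. Write $\varphi=\varphi_1+i\varphi_2$ and $\psi=\psi_1+i\psi_2$, where $\varphi_1,\varphi_2,\psi_1,\psi_2$ are real-valued functions in $L^1(\bR^d)$. Since $|\varphi_j|\leq |\varphi|$ and $|\psi_j|\leq |\psi|$ for $j=1,2$, and since $\cE_f$ is monotone on non-negative functions (this follows directly from its definition as an integral of a non-negative kernel), we get $\cE_f(|\varphi_j|)\leq \cE_f(|\varphi|)<\infty$ and similarly $\cE_f(|\psi_j|)<\infty$. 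Thus each of the four real-valued pairs $(\varphi_j,\psi_k)$ with $j,k\in\{1,2\}$ satisfies the hypotheses of Lemma \ref{L1-lemma}, giving us four identities of the form \eqref{identity2}.

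Next, I would expand both sides of the claimed identity \eqref{identity3} using the decompositions. On the spatial side,
\[
\varphi(x)\overline{\psi(y)}=\bigl(\varphi_1(x)\psi_1(y)+\varphi_2(x)\psi_2(y)\bigr)+i\bigl(\varphi_2(x)\psi_1(y)-\varphi_1(x)\psi_2(y)\bigr).
\]
On the spectral side, using the linearity of $\cF$ together with $\cF\varphi=\cF\varphi_1+i\cF\varphi_2$ and $\overline{\cF\psi}=\overline{\cF\psi_1}-i\overline{\cF\psi_2}$, an analogous expansion of $\cF\varphi(\xi)\overline{\cF\psi(\xi)}$ holds. Taking the appropriate $\bR$-linear combination (summing the $(1,1)$ and $(2,2)$ identities and subtracting/adding the mixed ones, according to the expansion above) of the four identities provided by Lemma \ref{L1-lemma} produces exactly \eqref{identity3}. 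The interchange of sum and integral is harmless because each of the four integrals involved is absolutely convergent.

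For the final finiteness claim, I would estimate
\[
\lvert\cE_f(\varphi,\psi)\rvert\leq\int_{\bR^d}\!\!\int_{\bR^d}f(x-y)\lvert\varphi(x)\rvert\,\lvert\psi(y)\rvert\,dx\,dy=\cE_f(\lvert\varphi\rvert,\lvert\psi\rvert),
\]
and then invoke the Cauchy--Schwarz bound $\cE_f(\lvert\varphi\rvert,\lvert\psi\rvert)\leq \cE_f(\lvert\varphi\rvert)^{1/2}\cE_f(\lvert\psi\rvert)^{1/2}<\infty$ that was already established in the proof of Lemma \ref{L1-lemma}. There is no real obstacle here; the result is a routine bilinearity/conjugation argument, and the only subtlety is to be careful with the complex conjugates when matching real and imaginary parts on the two sides of the identity.
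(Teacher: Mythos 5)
Your proposal is correct and follows essentially the same route as the paper's proof: decompose $\varphi$ and $\psi$ into real and imaginary parts, use $|\varphi_j|\leq|\varphi|$ to check the energy hypotheses, apply Lemma \ref{L1-lemma} to the four real pairs, and recombine according to the expansion of $\varphi(x)\overline{\psi(y)}$ and $\cF\varphi(\xi)\overline{\cF\psi(\xi)}$. The finiteness bound via $|\cE_f(\varphi,\psi)|\leq\cE_f(|\varphi|,|\psi|)\leq\cE_f(|\varphi|)^{1/2}\cE_f(|\psi|)^{1/2}$ is also exactly the paper's argument.
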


\noindent {\bf Proof:} We write $\varphi=\varphi_1+i\varphi_2$ and
$\psi=\psi_1+i\psi_2$, where $\varphi_1,\varphi_2,\psi_1,\psi_2 \in
L^1(\bR^d)$. Note that $|\varphi|^2=|\varphi_1|^2+|\varphi_2|^2$. It
follows that $|\varphi_1(x)|\leq |\varphi(x)|$ for all $x \in \bR$,
and hence
\begin{align*}
\cE_{f}(|\varphi_1|)&
=\int_{\bR^d}\int_{\bR^d}f(x-y)|\varphi_{1}(x)||\varphi_{1}(y)|dxdy\\
& \leq \int_{\bR^d}\int_{\bR^d}f(x-y)|\varphi(x)| |\varphi(y)|dxdy
=\cE_{f}(|\varphi|)<\infty.
\end{align*}
 Similarly,
$\cE_{f}(|\varphi_2|)<\infty$, $\cE_{f}(|\psi_1|)<\infty$ and
$\cE_{f}(|\psi_2|)<\infty$.

Note that
\begin{eqnarray*}
\varphi(x) \overline{\psi(y)}&=&[\varphi_1(x)+i\varphi_2(x)][\psi_1(y)-i\psi_2(y)]\\
&=& [\varphi_1(x)\psi_1(y)+\varphi_2(y)\psi_2(y)]+
i[\varphi_2(x)\psi_1(y)-\varphi_1(y)\psi_2(y)]
\end{eqnarray*}
and
\begin{eqnarray*}
\cF \varphi(\xi) \overline{\cF \psi(\xi)}&=&[\cF \varphi_1(\xi) +i \cF \varphi_2(\xi) ][\overline{\cF \psi_1(\xi)} -i\overline{ \cF \psi_2(\xi)}]\\
&=&\cF \varphi_1(\xi)\overline{\cF \psi_1(\xi)}+\cF \varphi_2(\xi)\overline{\cF \psi_2(\xi)}+i [\cF \varphi_2(\xi)\overline{\cF \psi_1(\xi)}-\cF \varphi_1(\xi)\overline{\cF \psi_2(\xi)}].
\end{eqnarray*}

We apply Lemma \ref{L1-lemma} to the pairs of functions $(\varphi_1,\psi_1)$, $(\varphi_2,\psi_2)$, $(\varphi_2,\psi_1)$, $(\varphi_1,\psi_2)$:
\begin{eqnarray*}
\int_{\bR^d}\int_{\bR^d}f(x-y)\varphi_1(x) \psi_1(y)dx dy&=&\frac{1}{(2\pi)^d}\int_{\bR^d}\cF \varphi_1(\xi) \overline{\cF \psi_1(\xi)} g(\xi)d\xi\\
\int_{\bR^d}\int_{\bR^d}f(x-y)\varphi_2(x) \psi_2(y)dx dy&=&\frac{1}{(2\pi)^d}\int_{\bR^d}\cF \varphi_2(\xi) \overline{\cF \psi_2(\xi)} g(\xi)d\xi \\
\int_{\bR^d}\int_{\bR^d}f(x-y)\varphi_2(x) \psi_1(y)dx dy&=&\frac{1}{(2\pi)^d}\int_{\bR^d}\cF \varphi_2(\xi) \overline{\cF \psi_1(\xi)} g(\xi)d\xi \\
\int_{\bR^d}\int_{\bR^d}f(x-y)\varphi_1(x) \psi_2(y)dx dy&=&\frac{1}{(2\pi)^d}\int_{\bR^d}\cF \varphi_1(\xi) \overline{\cF \psi_2(\xi)} g(\xi)d\xi,
\end{eqnarray*}
where all the integrals above are finite. We take the sum of the first two relations and then we add the difference between the third and fourth relations multiplied by $i$. We obtain:
$$
\int_{\bR^d}\int_{\bR^d}f(x-y)\{\varphi_1(x) \psi_1(y)+\varphi_2(x) \psi_2(y)-
i[\varphi_2(x) \psi_1(y)+\varphi_1(x) \psi_2(y)]\}dx dy=$$
$$\frac{1}{(2\pi)^d}\int_{\bR^d}\{\cF \varphi_1(\xi) \overline{\cF \psi_1(\xi)}+
\cF \varphi_2(\xi) \overline{\cF \psi_2(\xi)}+i[\cF \varphi_2(\xi) \overline{\cF \psi_1(\xi)}-
\cF \varphi_1(\xi) \overline{\cF \psi_2(\xi)} ]\}g(\xi)d\xi,$$
which is exactly relation \eqref{identity3}.
The fact that $|\cE_f(\varphi,\psi)|<\infty$ follows as in Lemma \ref{L1-lemma}.
$\Box$

\section{Existence of measurable modifications}
\label{appendix-meas}

In this section, we prove a result about existence of measurable modifications of random fields, which is
used frequently in the literature on SPDEs using Walsh' approach \cite{walsh86}. Parts a) and b) of this result are extensions to random fields of Theorem 30 in Chapter IV of \cite{dellacherie-meyer75}, respectively Proposition 3.21 of \cite{PZ07}.

Part a) of this result is used in the proof of Theorem \ref{D-measurable-th}. Part b) of this result is not needed in the present article, but we include it here since its proof requires only a minor modification of the proof of part a). We include the proof for the sake of completeness.

A random field is a collection $X=\{X(t,x);t \geq 0,x \in \bR^d\}$ of random variables defined on the same probability space $(\Omega,\cF,P)$. We say that $X$ is {\em stochastically continuous} if it is continuous in probability, i.e. $X(t_n,x_n) \stackrel{P}{\to} X(t,x)$ if $t_n \to t$ and $x_n \to x$.
$X$ is {\em predictable} with respect to
a filtration $(\cF_t)_{t \geq 0}$ if the map $(\omega,t,x) \mapsto X(\omega,t,x)$ is measurable with respect to the {\em predictable $\sigma$-field} on $\Omega \times \bR_{+} \times \bR^d$, which is the $\sigma$-field generated by elementary processes. (An {\em elementary process} is a process of the form $Y(\omega,t,x)=Y_0(\omega)1_{(a,b]}(t)1_{A}(x)$, where $Y_0$ is $\cF_a$-measurable, $0<a<b$ and $A \in \cB(\bR^d)$ is a bounded set.) $X$ is {\em adapted} with respect to
a filtration $(\cF_t)_{t \geq 0}$ if $X(t,x)$ is $\cF_t$-measurable, for any $t \geq 0$ and $x\in \bR^d$.

\begin{proposition}
\label{exist-meas}
a) Any stochastically continuous random field has a measurable modification.
b) Any stochastically continuous random field which is adapted with respect to
a filtration $(\cF_t)_{t \geq 0}$ has a predictable modification, with respect to the same filtration.
\end{proposition}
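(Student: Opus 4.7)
The plan is to reduce both parts to a single construction based on dyadic approximation, exploiting the fact that stochastic continuity is automatically \emph{uniform} on compact sets. First, I would localize, working on a bounded rectangle $R_0 := [0,T] \times [-R,R]^d$; passage to $\bR_{+} \times \bR^d$ will be a routine compact exhaustion combined with a diagonal extraction. Since $(t,x) \mapsto X(t,x)$ is continuous from $R_0$ into $L^0(\Omega)$ endowed with the metric $d(Y,Z) = \E[\min(1,|Y-Z|)]$, and $R_0$ is compact, this map is uniformly continuous: for every $\varepsilon,\eta > 0$ there exists $\delta > 0$ such that $\bP(|X(t,x)-X(s,y)| > \varepsilon) < \eta$ whenever $(t,x),(s,y) \in R_0$ with $|t-s|+|x-y| < \delta$.

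Next I would introduce dyadic partitions: for each $n$, decompose $R_0$ into cubes $Q_{i,j}^n$ of side $2^{-n}$, pick a representative point $(t_i^n,x_j^n) \in Q_{i,j}^n$, and set
$$X_n(\omega,t,x) := \sum_{i,j} X(t_i^n,x_j^n,\omega)\,1_{Q_{i,j}^n}(t,x).$$
Each $X_n$ is a finite sum of products of random variables with Borel indicators, hence jointly measurable, which gives the approximations needed for part (a). For part (b) I would additionally require $Q_{i,j}^n = (t_i^n,t_{i+1}^n] \times A_j^n$ with $t_i^n$ the \emph{left} endpoint and $X(t_i^n,x_j^n)$ evaluated at this left endpoint; each summand is then an elementary predictable process (this is where the adaptedness of $X$ is used), and consequently $X_n$ itself is predictable.

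By uniformity, $\sup_{(t,x) \in R_0} \bP(|X_n(t,x) - X(t,x)| > \varepsilon) \to 0$ as $n \to \infty$. I would then extract a subsequence $(n_k)$ with
$$\sup_{(t,x) \in R_0} \bP\big(|X_{n_k}(t,x)-X(t,x)| > 2^{-k}\big) < 2^{-k}.$$
For each \emph{fixed} $(t,x) \in R_0$, the first Borel--Cantelli lemma then yields $X_{n_k}(t,x) \to X(t,x)$ almost surely. I would finally define
$$\tilde X(\omega,t,x) := \limsup_{k \to \infty} X_{n_k}(\omega,t,x),$$
with the convention that the value is set to $0$ on the measurable set where the limsup is $\pm\infty$. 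Since limsup preserves joint measurability (respectively predictability), $\tilde X$ inherits the required regularity from the $X_n$'s, and the pointwise a.s. convergence just obtained shows that $\tilde X(t,x) = X(t,x)$ a.s. for every $(t,x) \in R_0$, which is precisely the modification property.

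The only genuinely delicate point is extracting a \emph{single} subsequence $(n_k)$ for which convergence in probability is upgraded to almost-sure convergence at every fixed $(t,x)$ simultaneously (rather than merely at $dt\,dx$-almost every $(t,x)$, which is what a direct Fubini argument gives). This is exactly the step that uses uniformity of stochastic continuity on compacts; without it the resulting $\tilde X$ would only equal $X$ almost everywhere in $(t,x)$ rather than pointwise. Once this is settled on $R_0$, a standard diagonal extraction with $R_0$ replaced successively by $[0,m]\times[-m,m]^d$ yields the global measurable (respectively predictable) modification, completing both parts.
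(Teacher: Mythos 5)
Your proposal is correct and follows essentially the same route as the paper: uniform stochastic continuity on compacts, step-process approximants built from partition representatives (left time-endpoints for the predictable case), a Borel--Cantelli argument at each fixed $(t,x)$ using a summable uniform bound on the exceptional probabilities, and a measurable (resp.\ predictable) limit defined on the convergence set. The only cosmetic difference is that the paper bakes the $2^{-m}$ bound directly into the choice of mesh at stage $m$ instead of extracting a subsequence from dyadic approximations, and it glues the compacts $[0,m]\times E_m$ together implicitly via the pointwise limit rather than by an explicit diagonal argument.
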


\begin{proof} a) Let $X=\{X(t,x);t \geq 0,x \in \bR^d\}$ be a stochastically continuous random field defined on a probability space $(\Omega,\cF,P)$.
Let $(E_m)_{m \geq 1}$ be an increasing sequence of compact sets in $\bR^d$ such that $\cup_m E_m=\bR^d$. Fix $m \geq 1$ and let $I=[0,m] \times E_m$. Since $X$ is stochastically continuous, it is uniformly stochastically continuous on $I$, i.e. for any $\varepsilon>0$ there exists $\delta_{\varepsilon}>0$ such that for any $(t,x),(s,y) \in I$ with $|t-s|^2+|x-y|^2 \leq \delta_m^2$,
$$\bP(|X(t,x)-X(s,y)|>2^{-m}) \leq 2^{-m}.$$

Let $0=t_0^{(m)}<t_1^{(m)}< \ldots<t_{n_m}^{(m)}=m$ be a partition of $[0,m]$ into subintervals of length smaller than $\delta_m$ and $(U_{l}^{(m)})_{l=1, \ldots,K_m}$ be a partition of $E_m$ into Borel sets of diameter smaller than $\delta_m$.
(The diameter of a set $S$ is defined as $\sup\{|x-y|; x,y \in S\}$.) Let $x_{l}^{(m)} \in U_l^{(m)}$ be arbitrary. For any $t \in (t_k^{(m)}, t_{k+1}^{(m)}]$ and $x \in U_l^{(m)}$,
\begin{equation}
\label{prob-Am} \bP(|X(t_k^{(m)},x_{l}^{(m)})-X(t,x)|>2^{-m}) \leq
2^{-m}.
\end{equation}

Define
$$X_m(\omega,t,x)=\sum_{k=0}^{n_m-1}\sum_{l=1}^{K_m}X(\omega,t_{k}^{(m)},x_{l}^{(m)})
1_{(t_k^{(m)},t_{k+1}^{(m)}]}(t)1_{U_l^{(m)}}(x).$$
Note that $X_m$ is a measurable process, since $X(t_{k}^{(m)},x_{l}^{(m)})$ is $\cF$-measurable.

Let $A=\{(\omega,t,x) \in \Omega \times \bR_{+} \times \bR^d; (X_{m}(\omega,t,x))_{m \geq 0} \ \mbox{converges}\}$. Then $A\in \cF \times \cB(\bR_+) \times \cB(\bR^d)$ and the process $\widetilde{X}$ defined by
$$\widetilde{X}(\omega,t,x)=1_{A}(\omega,t,x)\lim_{m \to \infty}X_{m}(\omega,t,x)$$
is measurable.

We now show that $\widetilde{X}$ is a modification of $X$. Let
$(t,x) \in \bR_{+} \times \bR^d$ be arbitrary. Then $(t,x) \in I_m$
for $m$ large enough, and $t \in (t_k^{(m)},t_{k+1}^{(m)}]$ for some
$k=k_m$ and $x \in U_{l}^{(m)}$ for some $l=l_m$. Let
$$A_m=\{|X(t_k^{(m)},x_{l}^{(m)})-X(t,x)|>2^{-m}\}.$$ By
(\ref{prob-Am}), $\sum_{m}\bP(A_m)<\infty$. By the Borel-Cantelli
Lemma, $\bP(\Omega_{t,x})=1$ where
$\Omega_{t,x}=\liminf_{m}A_m^c=\cup_{m_0} \cap_{m \geq m_0}A_m^c$.
Let $\omega \in \Omega_{t,x}$. Then there exists $m_0=m_0(\omega)$
such that
$$|X(\omega,t_k^{(m)},x_l^{(m)})-X(\omega,t,x)| \leq 2^{-m} \quad \forall m \geq m_0(\omega).$$
By the definition of $X_m$, it follows that
$$|X_m(\omega,t,x)-X(\omega,t,x)| \leq 2^{-m} \quad \forall m \geq m_0(\omega).$$
From this, we infer that $\lim_{m \to
\infty}X_m(\omega,t,x)=X(\omega,t,x)$, i.e. $(\omega,t,x) \in A$.
This shows that $\widetilde{X}(\omega,t,x)=X(\omega,t,x)$ for any
$\omega \in \Omega_{t,x}$. Since $\bP(\Omega_{t,x})=1$, we infer
that $\bP(\widetilde{X}(t,x)=X(t,x))=1$, as required.

b) In this part, we assume, in addition, that $X$ is adapted. Then $X_m$ is predictable, being a linear combination of elementary processes. Hence $A$ and $\widetilde{X}$ are predictable.
\end{proof}

\begin{remark}
{\rm Proposition \ref{exist-meas}.a) follows from a
general result of Cohn: see Theorem 3 of \cite{cohn72}, and the
(first) Remark on page 164 of \cite{cohn72}. }
\end{remark}

\section*{Acknowledgement}
The first author is grateful to Mohammud Foondun who asked her (in a personal communication) about
the existence of a predictable modification of a random field. The answer to his question is given by Proposition \ref{exist-meas}.b). The authors would like to thank David Nualart for useful discussions, and for sending them the preliminary version of the preprint \cite{huang-nualart-viitasaari}.



\begin{thebibliography}{99}


\bibitem{balan12} Balan, R. M. (2012). The stochastic wave equation with multiplicative fractional noise: a Malliavin calculus approach. {\em Potential Anal.} {\bf 36}, 1-34.

\bibitem{balan-chen} Balan, R. M. and Chen, L. (2018). Parabolic Anderson model with space-time homogeneous Gaussian noise and rough initial conditions. {\em J. Theor. Probab.} 31, 2216-2265.

\bibitem{balan-song17} Balan, R. M. and Song, J. (2017). Hyperbolic Anderson Model with space-time homogeneous Gaussian noise.
{\em Latin Amer. J. Probab. Math. Stat.} {\bf 14}, 799-849.

\bibitem{balan-song17b} Balan, R. M. and Song, J. (2017). Second order Lyapunov exponents for parabolic and hyperbolic Anderson models. Preprint available on arxiv:1704.02411.

\bibitem{bally-pardoux98} Bally, V. and Pardoux, E. (1998). Malliavin calculus for white noise driven parabolic SPDEs. {\em Potential Anal.} {\bf 9}, 27-64.

\bibitem{BGP12} Basse-O'Connor, A., Graversen, S.-E. and Pedersen, J. (2012). Multiparameter processes
with stationary increments. Spectral representation and intregration. {\em Electr. J. Probab.} {\bf 17}, paper no. 74, 21 pages.

\bibitem{carmona-nualart88} Carmona, R. and Nualart, D. (1988). Random non-linear wave equations: smoothness of the solutions. {\em Probab. Th. Rel. Fields} {\bf 79}, 469-508.

\bibitem{cohn72} Cohn, D. L. (1972). Measurable choice of limit points and the existence of separable and measurable processes. {\em Z. Wahr. verw. Geb.} {\bf 22}, 161-165.

\bibitem{dalang99} Dalang R.C. (1999) Extending martingale measure stochastic integral with applications to spatially homogeneous spde's. {Electr. J. Probab.} {\bf 4}.

\bibitem{dalang-frangos98} Dalang, R. C. and Frangos, N. (1998). The stochastic wave equation in two spatial dimensions. {\em Ann. Probab.} {\bf 26}, 187-212.

\bibitem{dalang-sanz05} Dalang, R. C. and Sanz-Sol\'e, M. (2005). Regularity of the sample paths of a class of second-order spde's. {\em J. Funct. Anal.} {\bf 227}, 304-337.

\bibitem{delgato-nualart-zheng} Delgado-Vences, F.,
Nualart, D. and Zheng, G. (2018). A central limit theorem for the
stochastic wave equation with fractional noise. Preprint available
on arXiv:1812.05019.

\bibitem{dellacherie-meyer75} Dellacherie, C. and Meyer, P.A. (1975). {\em Probabilit\'es et Potentiel.} Hermann, Paris.

\bibitem{HHNT} Hu, Y., Huang, J., Nualart, D. and Tindel, S. (2015). Stochastic heat equations with general multiplicative Gaussian noises: H\"{o}lder continuity and intermittency. {\em Electr. J. Probab.} {\bf 20}, no. 55, 1-50.

\bibitem{hu-le18} Hu, Y. and L\^e, K. (2018). Asumptotics of the density of parabolic Anderson random fields. Preprint available on arxiv:1801.03386

\bibitem{hu-nualart-song11} Hu, Y., Nualart, D. and Song, J. (2011).
Feynman-Kac formula for heat equation driven by fractional white
noise. {\em Ann. Probab.} {\bf 39}, 291-326.

\bibitem{huang-le-nualart17} Huang, J., L\^e, K. and Nualart, D. (2017). Large time asymptotics for the Parabolic Anderson model driven by spatially correlated noise. {\em Ann. Inst. Henri Poincar\'e} {\bf 53}, 1305-1340.

\bibitem{huang-nualart-viitasaari} Huang, J., Nualart, D. and Viitasaari, L. (2018). A central limit theorem for the stochastic heat equation. Preprint available on arXiv:1810.09492.


\bibitem{Ito} It\^o, K. (1954) Stationary random distributions. {\em Mem. Coll. Sci. Univ. Kyoto Ser. A Math.}
{\bf 28}, 209-223.

\bibitem{jolis10} Jolis, M. (2010). The Wiener integral with respect to second-order processes with stationary increments. {\em J. Math. Anal. Appl.} {\bf 366}, 607-620.

\bibitem{KX09} Khoshnevisan, D. and Xiao, Y. (2009).
 Harmonic analysis of additive L\'evy processes.
 {\em Probab. Th. Rel. Fields} {\bf 145}, 459-515.

\bibitem{mms} M\'arquez-Carreras, D., Mellouk, M., Sarr\`a, M.
(2001). On stochastic partial differential equations with spatially
correlated noise: smoothness of the law. {\em Stoch. Proc. Appl.}
{\bf 93}, 269-284.

\bibitem{millet-sanz99} Millet, A. and Sanz-Sol\'e, M. (1999). A stochastic wave equation in two space dimension: smoothness of the law. {\em Ann. Probab.} {\bf 27}, 803-844.

\bibitem{mn} Mueller, C. and Nualart, D. (2008). Regularity of the density for the stochastic heat
equation. {\em Electr. J. Probab.} {\bf 13}, Paper no. 74,
2248-2258.

\bibitem{nualart98} Nualart, D. (1998). Analysis on Wiener space and anticipative stochastic calculus. {\em Lecture Notes Math.} {\bf 1690}. Ecole d'ete de probabilit\'es de Saint Flour XXV-1995, P. Bernard Ed. 123-227.

\bibitem{nualart06} Nualart, D. (2006). {\em Malliavin Calculus and Related Topics}. Second Edition. Springer.

\bibitem{NQ07} Nualart, D. and Quer-Sardanyons, L. (2007).
Existence and smoothness of the density for spatially homogeneous
SPDEs. {\em Potential Anal.} {\bf 27}, 281-299.

\bibitem{pardoux-zhang93} Pardoux, E. and Zhang, T. (1993). Absolute continuity for the law of the solution of a parabolic SPDE. {\em J. of Funct. Anal.} {\bf  112}, 447-458.

\bibitem{peszat02} Peszat, S. (2002). The Cauchy problem for a nonlinear stochastic wave equation in any dimension. {\em J. Evol. Equ.} {\bf 2}, 383-394.

\bibitem{PZ00} Peszat, S. and Zabczyk, J. (2000). Nonlinear stochastic wave and heat equations.
{\em Probab. Th. Rel. Fields} {\bf 116}, 421-443.

\bibitem{PZ07} Peszat, S. and Zabczyk, J. (2007). {\em Stochastic partial differential equations with L\'evy noise}. Cambridge University Press.

\bibitem{qs1} Quer-Sardanyons, L., Sanz-Sol\'e, M. (2004). Absolute continuity of the law of the
solution to the 3-dimensional stochastic wave equation. {\em J.
Funct. Anal.} {\bf{206}}, No.1, 1-32.

\bibitem{qs2} Quer-Sardanyons, L., Sanz-Sol\'e, M. (2004) A stochastic wave equation in dimension 3:
Smoothness of the law. {\em Bernoulli} {\bf{10}}, No.1, 165-186.

\bibitem{sanz-sus13} Sanz-Sol\'e, M. and S\"u\ss, A. (2013). The stochastic wave equation in high dimensions: Malliavin differentiability and absolute continuity. {\em Electr. J. Probab.} {\bf 18}, no. 64, 1-28

\bibitem{sanz-sus15} Sanz-Sol\'e, M. and S\"u\ss, A. (2015). Absolute continuity for SPDEs with irregular fundamental solution. {\em Electr. Comm. Probab.} {\bf 20}, no.14, 1-11.

\bibitem{walsh86}  Walsh, J. B. (1986). An introduction to stochastic
partial differential equations. Ecole d'Et\'{e} de
Probabilit\'{e}s de Saint-Flour XIV. {\em Lecture Notes in Math.} {\bf 1180}, 265-439, Springer-Verlag. 

\end{thebibliography}
\end{document}